\newcommand{\CCov}{\mathcal{C}}
\newcommand{\bbR}{\mathbb{R}}
\newcommand{\sN}{\mathcal{N}}
\newcommand{\sM}{\mathcal{M}}
\newcommand{\sT}{\mathcal{T}}
\newcommand{\sU}{\mathcal{U}}
\newcommand{\vb}{\mathbf{b}}
\newcommand{\vu}{\mathbf{u}}
\ifdef{\vv}{
\renewcommand{\vv}{\mathbf{v}}
}{
\newcommand{\vv}{\mathbf{v}}
}
\newcommand{\vx}{\mathbf{x}}
\newcommand{\vy}{\mathbf{y}}
\newcommand{\vzero}{\mathbf{0}}
\newcommand{\vbeta}{\boldsymbol{\beta}}
\newcommand{\mA}{\mathbf{A}}
\newcommand{\mB}{\mathbf{B}}
\newcommand{\mC}{\mathbf{C}}
\newcommand{\mD}{\mathbf{D}}
\newcommand{\mF}{\mathbf{F}}
\newcommand{\mG}{\mathbf{G}}
\newcommand{\mH}{\mathbf{H}}
\newcommand{\mI}{\mathbf{I}}
\newcommand{\mK}{\mathbf{K}}
\newcommand{\mN}{\mathbf{N}}
\newcommand{\mQ}{\mathbf{Q}}
\newcommand{\mR}{\mathbf{R}}
\newcommand{\mS}{\mathbf{S}}
\newcommand{\mU}{\mathbf{U}}
\newcommand{\mW}{\mathbf{W}}
\newcommand{\mX}{\mathbf{X}}
\newcommand{\mY}{\mathbf{Y}}
\newcommand{\mZ}{\mathbf{Z}}
\newcommand{\mGamma}{\boldsymbol{\Gamma}}
\newcommand{\mDelta}{\boldsymbol{\Delta}}
\newcommand{\bmD}{\bar{\mathbf{D}}}
\newcommand{\bmU}{\bar{\mathbf{U}}}
\newcommand{\bmW}{\bar{\mathbf{W}}}
\newcommand{\bmC}{\bar{\mathbf{C}}}
\newcommand{\Expect}{\mathbb{E}}
\newcommand{\intd}{\,\mathrm{d}}
\newcommand{\diag}{\mathrm{diag}}
\newcommand{\tr}{\mathrm{tr}}
\newcommand{\Var}{\mathrm{Var}}
\def\trans{^\mathsf{T}}
\DeclareMathOperator*{\argmin}{arg\,min}
\newcommand{\vertiii}[1]{{\left\vert\kern-0.25ex\left\vert\kern-0.25ex\left\vert #1 
		\right\vert\kern-0.25ex\right\vert\kern-0.25ex\right\vert}}
\newtheorem{remark}{Remark}
\newtheorem{theorem}{Theorem}[section]
\newtheorem{lemma}[theorem]{Lemma}
\newtheorem{proposition}[theorem]{Proposition}
\theoremstyle{remark}
\newtheorem{condition}{Condition}
\begin{document}

\begin{frontmatter}
	\title{Penalized spline estimation of principal components for sparse functional data: rates of convergence}
	\runtitle{Penalized Spline Estimation for FPCA}
	
	\begin{aug}
		\author[A,B]{\fnms{Shiyuan} \snm{He}\ead[label=e0,mark]{20240101@btbu.edu.cn}}
		\author[C]{\fnms{Jianhua Z.} \snm{Huang}\ead[label=e3,mark]{jhuang@cuhk.edu.cn}}
		\author[B]{\fnms{Kejun} \snm{He}\ead[label=e4,mark]{kejunhe@ruc.edu.cn (corresponding author)}}
		\address[A]{School of Matematics and Statistics, Beijing Technology and Business University, \printead{e0}}
		\address[B]{Center for Applied Statistics, Institute of Statistics and Big Data, Renmin University of China,
			\printead{e4}}
		
		\address[C]{School of Data Science, The Chinese University of Hong Kong, Shenzhen (CUHK-Shenzhen), China,
			\printead{e3}}
	\end{aug}

	\begin{abstract}
		This paper gives a comprehensive treatment of the convergence rates of penalized spline estimators for simultaneously estimating several leading principal component functions, when the functional data is 
		sparsely observed.  The penalized spline estimators are defined as the solution of a penalized empirical risk minimization problem, where the loss function belongs to a general class of loss functions motivated by the matrix Bregman divergence, and the penalty term is the integrated squared derivative. 
		The theory reveals that the asymptotic behavior of penalized spline estimators depends on the interesting interplay between several factors, i.e., the smoothness of the unknown functions, the spline degree, the spline knot number, the penalty order, and the penalty parameter. The theory also classifies the asymptotic behavior into seven scenarios and characterizes whether and how the minimax optimal rates of convergence are achievable in each scenario. 
	\end{abstract}
	
	\begin{keyword}[class=MSC2010]
		\kwd[Primary ]{62G05}
		\kwd{62G20}
		\kwd[; secondary ]{62H25}.
	\end{keyword}
	
	\begin{keyword}
		\kwd{Functional principal component analysis}
		\kwd{manifold  geometry}
		\kwd{matrix Bregman divergence}
		\kwd{roughness penalty}
	\end{keyword}
	
\end{frontmatter}




\section{Introduction and overview of the main result}\label{sec:intro}

Functional principal component analysis (FPCA, \cite{rice1991estimating,silverman96,ramsay05}) is a widely used dimension reduction tool for analyzing functional data. 
After the principal component reduction, an infinite-dimensional functional data object is summarized by a few leading principal component functions, which in turn can be used as inputs for statistical analysis, such as regression \citep{ramsay91,yao05}, clustering \citep{james03,chiou07}, and classification \citep{james2001functional}. 
There is a large literature on functional principal component analysis and its applications. Recent developments include
\cite{cai2010nonparametric,chiou2014multivariate,happ2018multivariate,Shin2022,Shi2022}.

In many applications, closely related to longitudinal data analysis, functional data objects are only sparsely observed, i.e., each function is only observed at a finite number of points~\citep{rice2004}. 
For such applications, FPCA has another important usage of recovering the whole function by borrowing strength across functions. This is possible because these functional objects share the same set of leading principal component functions. There are two typical approaches for estimating functional principal components of sparse functional data. The first approach estimates the covariance function using a smoothing method such as local polynomial smoothing and then performs eigenvalue decomposition of the estimated covariance kernel (e.g., \cite{yaopca05,hall2006properties,li2010uniform}). 
The second approach models the functional data objects by a linear combination of principal components, which in turn are represented by a basis expansion such as polynomial splines, and then uses maximum likelihood or its variants like (penalized) empirical risk minimization for parameter estimation (e.g., \cite{james2000principal,peng2009geometric,zhou2008joint,Zhou2014}).

The first theoretical work on the asymptotic properties of the second approach has been done in an impressive work of \cite{peng2009geometric}, where the specific manifold structure of the restricted parameter space of spline coefficients for representing principal component functions was thoroughly investigated, and the rates of convergence and asymptotic normality of the spline estimator were obtained. However, this work did not consider using roughness penalty for the spline estimator.  It has been widely accepted that penalized splines are advantageous over un-penalized polynomial splines for function estimation~\citep{eilers96,ruppert2003semiparametric}. Applications of using roughness penalty in spline estimation of functional principal components include \cite{zhou2008joint,Zhou2014,He2018,Ding2022,sang2022}.

Asymptotic properties of the penalized spline function estimation have been studied in a series of works \citep{li2008asymptotics,wang2011asymptotics,claeskens2009asymptotic,kauermann2009some,xiao2019asymptotic,xiao2020asymptotic,huang2021asymptotic}. Our interest lies in extending these works to the problem of penalized spline estimation of functional principal components for sparse functional data.  As we shall describe more precisely in subsequent sections, we represent the leading principal component functions using basis expansions of polynomial splines and estimate the spline coefficients by minimizing a penalized empirical risk (or loss), where the loss function belongs to a general class of matrix divergence losses and the penalty term takes the form of $J(f) = \int \{f^{(q)}\}^2$, where $q$ is referred to as the order of the penalty functional or penalty order for short. 
See equations~\eqref{eqn:penalty}, \eqref{eqn:firstOptimization}, and \eqref{eqn:divergenceloss} for the precise definition of the penalized empirical risk minimization problem.

On asymptotic properties of the penalized spline estimator of functional principal components, the following questions are of particular interest. 
\vspace{-0.4em}
\begin{itemize}[leftmargin=1em]
	\item For the regression problem or more generally for extended linear models, it has been observed in the literature that the penalized spline estimator sometimes behaves like an un-penalized polynomial spline estimator, and sometimes behaves like a smoothing-spline estimator (e.g., \cite{claeskens2009asymptotic, huang2021asymptotic}). Does the penalized spline estimator of principal component functions have similar behaviors? 
	
	\item In \cite{paul2009consistency}, the authors showed that the minimax lower bound \citep{stone1982optimal}
	for estimating a principal component function is $N^{-2p/(2p+1)}$, where $N$ is the number of functional objects and $p$ is the smoothness of the unknown function. If the rate of convergence of an estimator matches this lower bound, we say that the estimator achieves the (minimax) optimal rate of convergence. When can the penalized spline estimator achieve the optimal rate of convergence? How should the number of knots and the penalty parameter be specified in order to achieve the optimal rate?
	
	\item How is the asymptotic behavior of the penalized spline estimator influenced by the penalty order and the number of knots of the splines?
	This question is of important because in practice prior knowledge about the smoothness of the unknown function is usually unavailable. For instance, if one uses a penalty that assumes larger (or less) order of derivatives than what the unknown function actually has, can the penalized spline estimator still achieve the optimal rate?
\end{itemize}
\vspace{-0.4em}
The goal of this paper is to give rather comprehensive answers to all these questions. 

Our conclusion is that the asymptotic behavior of the penalized spline estimator of the principal component functions depends on the interplay among the smoothness $p$ of the unknown function, spline degree $m$, penalty order $q$, spline knot number $K=K_N$, and the penalty parameter $\eta= \eta_N$. 
Note that it is necessary to require $q  \leq m$, otherwise the penalty term is not well-defined (see, e.g., Section~3 of \cite{huang2021asymptotic}).
We also set $\zeta = (m+1)\wedge p$. According to our result presented in Theorem~\ref{thm:main} of Section~\ref{sec:roadmap}, the asymptotic behavior can be classified into seven scenarios summarized in Table~\ref{summary_table}.  
Table~\ref{summary_table} shows that scenarios of convergence rates are characterized by $\eta \lesssim K^{-\nu}$ for some constant $\nu$. Since using a smaller penalty parameter $\eta$ indicates lighter penalization, we refer to these cases as the light penalty scenarios. Alternatively, these scenarios can be equivalently characterized by $K \lesssim \eta^{-1/\nu}$, we may also refer to these cases as the small knot number scenarios. Similarly, the scenarios corresponding to 
$\eta \gtrsim K^{-\nu}$, or equivalently $K\gtrsim \eta^{-1/\nu}$, are referred to as the heavy penalty or large knot number scenarios. 

\begin{table}[t!]
	\caption{Seven scenarios for the rate of convergence of $\|\hat{\psi}_{r} - \psi_{0r} \|^2  + \eta  J (\hat{\psi}_{r})$, where $\hat{\psi}_{r}$ is the penalized spline estimator of the principal component function $\psi_{0r}$. The left column presents the rate of convergence, the middle column gives the specification of parameters for achieving the best rate presented on the right column. In the right column, (*) indicates achieving the minimax optimal rate when $p\leq m+1 $; (**) indicates achieving the minimax optimal rate.}
	\label{summary_table}
	\begin{center}
		\begin{tabular}{c >{\centering\arraybackslash}p{6cm} c}
			\hline
			\textbf{Rate of convergence} &\textbf{Parameters for achieving the best rate} & \textbf{Best rate} \\[0.2em]
			\hline\hline
			\noalign{\vskip 0.05em} 
			\multicolumn{3} {c}{I. $q < p$}\\[0.05em] 
			\hline  \noalign{\vskip 0.05em}    
			\multicolumn{3}{l}{1. $\eta \lesssim K^{-2\zeta}$} \\[0.05em]
			$K^{-2\zeta} + (N/K)^{-1}$
			& $K\asymp N^{1/(2\zeta+1)} $&  $N^{-2\zeta/(2\zeta+1)}$  (*) \\
			\noalign{\vskip 0.05em} 
			\hline  \noalign{\vskip 0.05em} 
			\multicolumn{3} {l}{2. $K^{-2\zeta} \lesssim \eta \lesssim K^{-2q}$}  \\[0.05em]
			$\eta + (N/K)^{-1}$
			& $\eta \asymp K^{-2\zeta}$ and 
			$K\asymp N^{1/(2\zeta+1)}$ &  $N^{-2\zeta/(2\zeta+1)}$  (*)  \\
			\noalign{\vskip 0.05em} 
			\hline  \noalign{\vskip 0.05em} 
			\multicolumn{3} {l}{3. $\eta \gtrsim K^{-2q}$}\\[0.05em]
			$\eta + \{N \eta^{1/(2q)}\}^{-1}$
			& $\eta \asymp N^{-2q/(2q+1)}$ &  $N^{-2q/(2q+1)}$ \\
			\noalign{\vskip 0.05em}
			\hline\hline
			\noalign{\vskip 0.05em}
			\multicolumn{3} {c}{II. $ q= p$ }\\[0.05em] 
			\hline  \noalign{\vskip 0.05em} 
			\multicolumn{3} {l}{1. $\eta \lesssim K^{-2p}$} \\[0.05em]
			$K^{-2p} + (N/K)^{-1}$ 
			& $K \asymp N^{1/(2p+1)}$& $N^{-2p/(2p+1)}$ (**) \\
			\noalign{\vskip 0.05em} 
			\hline  \noalign{\vskip 0.05em} 
			\multicolumn{3} {l}{2. $\eta \gtrsim K^{-2p}$}\\[0.05em]
			$\eta  + \{N \eta^{1/(2p)}\}^{-1}$ 
			& $\eta \asymp N^{-2p/(2p+1)}$ & $N^{-2p/(2p+1)}$ (**) \\
			\noalign{\vskip 0.05em}
			\hline\hline
			\noalign{\vskip 0.05em}
			\multicolumn{3} {c}{III. $ q > p$}\\[0.05em]
			\hline  \noalign{\vskip 0.05em} 
			\multicolumn{3} {l}{1. $\eta \lesssim K^{-2q}$}\\[0.05em]
			$K^{-2p} + (N/K)^{-1}$ 
			& $K \asymp N^{1/(2p+1)}$ & $N^{-2p/(2p+1)}$  (**)\\
			\noalign{\vskip 0.05em} 
			\hline  \noalign{\vskip 0.05em} 
			\multicolumn{3} {l}{2. $\eta \gtrsim K^{-2q}   $}\\[0.05em]
			$ \eta K^{2q-2p} +  \{N \eta^{1/(2q)}\}^{-1}$ 
			&  $K \asymp \eta^{-1/(2q)}$ and $\eta \asymp N^{-2q/(2p +1)}$& $N^{-2p/(2p+1)}$  (**)\\
			\noalign{\vskip 0.05em} 
			\hline
		\end{tabular}
	\end{center}
\end{table}

In the following, we discuss the results in Table~\ref{summary_table} by grouping the seven scenarios according to the relation between the smoothness $p$ of the unknown  function $\psi_{0r}$ and the penalty order $q$.

\begin{enumerate}[leftmargin=2em]
	\item[I.] $q<p$. For the light penalty or small knot number case, Scenario I.1, the penalized spline estimator behaves like an un-penalized polynomial spline estimator. Specifically, the rate of convergence is $K^{-2\zeta} + (N/K)^{-1}$, which is the rate of convergence of an un-penalized polynomial spline estimator of a $p$-smooth function \citep{huang2003local}. If $p \leq m+1$, when the knot number is chosen to satisfy $K\asymp N^{1/(2p+1)}$, the estimator can achieve the optimal rate of convergence $N^{-2p/(2p+1)}$. If $p> m+1$, the best rate can be achieved is $N^{-2(m+1)/\{2(m+1)+1\}}$, which is slower than the optimal rate. This sub-optimal rate is due to the saturation phenomenon of spline approximation (see page~3388 of \cite{huang2021asymptotic}) and thus cannot be improved.
	
	For the heavy penalty or large knot number case, Scenario I.3, the penalized spline estimator behaves like the smoothing spline estimator. Specifically, the rate of convergence is $\eta + \{N \eta^{1/(2q)}\}^{-1}$, which is the rate of convergence of a smoothing spline estimator of a $p$-smooth function \citep{gu2013smoothing}. When the penalty parameter is chosen to satisfy $\eta \asymp N^{-2q/(2q+1)}$, the estimator achieves the best rate  $N^{-2q/(2q+1)}$ for this case, which is slower than the optimal rate. This result suggests that using a penalty with an order smaller than the smoothness of the unknown function will hurt the ability of the penalized spline estimator to achieve the optimal rate.
	
	For the intermediate case, Scenario I.2, the estimator behaves neither like the un-penalized polynomial spline estimator nor like the smoothing spline estimator. The rate of convergence depends on both the penalty parameter $\eta$ and the knot number $K$. If $p\leq m+1$, by appropriately choosing both parameters, the estimator can achieve the optimal rate of convergence. 
	We note, however, that the penalty parameter $\eta$ needs to be chosen on the left boundary of its range for rate optimal. 
	
	\item[II.] $q=p$. Comparing with I.1--I.3, the results are cleaner. There is no intermediate scenario. Scenario II.1 is similar to I.1, with the difference that the optimal rate can always be obtained. Scenario II.2 is similar to I.3, with the difference that the optimal rate of convergence can be obtained. 
	
	\item[III.] $q>p$. Scenario III.1, similar to II.1, is a light penalty or small knot number case. In this case, the penalized spline estimator behaves like an un-penalized polynomial spline estimator and the optimal rate can be achieved by suitably choosing the knot number. Scenario III.2 is a heavy penalty or large knot number case. Unlike in I.3 and II.2, the estimator behaves neither like the un-penalized polynomial spline estimator nor like the smoothing spline estimator. The convergence rate depends on the penalty parameter $\eta$ and the knot number $K$. By appropriately choosing both parameters, the estimator can achieve the optimal convergence rate. We note, however, that the penalty parameter needs to be chosen on the left boundary of its range for rate optimal. 
\end{enumerate}

Similar results have been obtained
for penalized spline function estimation of convex extended linear models in \cite{huang2021asymptotic}; see Table~1 of the cited paper and its discussion. 
We would like to point out the substantial differences of the two papers.
First, the orthonormal constraints on the eigenfunctions impose a manifold structure on the parameter space, and it is necessary to develop an appropriate local manifold geometry on the space of constrained spline coefficients to solve the current problem. Second, the loss function used here is much more complex than those used in the extended linear models, and the corresponding penalized empirical risk minimization problem is non-convex. 
Third, we need new convergence results of an empirical process indexed by a manifold. Because of these differences, we take an entirely new technical approach for penalized splines to obtain our results in Table~\ref{summary_table}.

This paper advances in several important ways the earlier work on rates of convergence of spline estimation of eigenfunctions for sparse functional data~\citep{paul2009consistency}.
The earlier work considered only a loss function motivated by the Gaussian assumption, and the (sub)-Gaussian tail behavior of the observations was assumed.
Our theory, on the other hand, applies to a general class of loss functions motivated by matrix divergence \citep{pitrik2015joint,dhillon2008matrix}, and the (sub)-Gaussian assumption is not required.  
Our theory also characterizes the situations that optimal rates of convergence can be obtained, improving the sub-optimal convergence rate of \cite{paul2009consistency}, which includes an additional logarithmic factor.
More importantly, introducing the roughness penalty allows us to find interesting new phenomena that do not exist for un-penalized spline estimators (as shown in Table~\ref{summary_table} and its discussions).

The empirical performance of the penalized spline estimator of functional principal components presented in this paper has been examined in a recent work \citep{he2022spline}, where a fast manifold conjugate gradient algorithm was developed to solve the optimization problem. The numerical results demonstrated the competitive performance of the penalized spline estimator. The current paper develops the relevant theoretical results that were missing in the previous work.

The rest of this paper is organized as follows. Section~\ref{sec:meth} presents the problem formulation of the penalized spline estimation of functional principal components. 
Section~\ref{sec:lossfun} introduces a general class of loss functions that are used in our formulation and discusses their properties. 
The main theoretical result is stated in Section~\ref{sec:roadmap} and proved in Section~\ref{sec:theo:mainResult}. 
To prepare for the proof in Section~\ref{sec:theo:mainResult},
Section~\ref{sec:manifold} develops the local geometry for the manifold of the constrained parameter space and new convergence results on an empirical process defined on a manifold, and Section~\ref{sec:expectedloss} develops properties of the risk function around the optimal parameter. In Section~\ref{sec:consistency}, we complement the main result by establishing the consistency of the global estimator by restricting the parameter space to be a compact set. Section~\ref{sec:disc} discussed some problems for future research. The Supplementary Material \cite{he2023penalized} contains all the other technical proofs and details. 

\textbf{Notations}. In this work, for two sequences of numbers $\{a_n\}$ and $\{b_n\}$, we write $a_n\lesssim b_n$ if $a_n\le C\cdot b_n$ for some positive constant $C$. When $a_n\lesssim b_n$ and $b_n\lesssim a_n$ simultaneously, their relation is denoted as $a_n \asymp b_n$.
We write $a_n \ll b_n$ if $a_n/b_n \to 0$ as $n\to \infty$. For two numbers $a,b\in \bbR$, we denote $a\wedge b = \min\{a,b\}$ and $a\vee b = \max\{a,b\}$. Denote $\|\cdot\|$ as the operator norm and $\|\cdot\|_F$ the Frobenius norm.  Table~S.1 in the Supplementary Material \cite{he2023penalized} provides a summary of notations and constants that are frequently used in the paper.

\section{Penalized spline estimation for FPCA} \label{sec:meth}
We now present the general framework of using the penalized spline method to estimate several leading functional principal components from sparsely observed functional data. 

\subsection{Functional principal components} \label{sec:meth:defininition}
Consider a random function $x(u)$ with  index $u\in \sU$, where $\sU$ is a compact set in $\mathbb{R}$. Without loss of generality,  we assume $\sU=[0,1]$. Let $ L_2(\sU)$  denote the Hilbert space of squared integrable functions on $\sU$, equipped with the inner product $\langle x, \widetilde x \rangle_{L_2} = \int_{\sU} x(u) \widetilde{x}(u)\intd u$  and the norm  $\|f\|_{L_2}^2 =\int_{\sU} f^2(u)\intd u$, for $x,\widetilde{x}\in L_2(\sU)$.
Define $L_2(\sU^2)$ similarly for $\sU^2 = \sU \times \sU$. Suppose the random function $x(u)$ has zero mean and  covariance function  $\mathcal{K}(u,v ) = \Expect \{x(u) x(v) \}$. The zero mean assumption is simply to focus only on the asymptotics of the estimates of the leading functional principal components. The covariance function is assumed to be sufficiently smooth, as described in the following condition. The derivative below is understood as weak derivative.

\begin{condition} \label{ass:covsmooth}
	The true covariance function $\mathcal{K}$  belongs to the bi-variate Sobolev space of order $p(\ge 1)$, i.e.,
	$$ \mathcal{K} \in L_2^{p}(\sU^2) := \Big\{\mathcal{H}\in L_2(\sU^2):\; \sum_{i+j = p}\Big\|\frac{\partial^p}{\partial u^i\partial v^j} \mathcal{H}(u,v) \Big\|_{L_2} < \infty \Big\}.$$
\end{condition}

Since $\mathcal{K}$ is non-negative definite, it follows from Mercer's theorem (see Section~4.6 of \cite{hsing2015theoretical})  that there exists an orthonormal sequence $\{\psi_{0r}(u)\}_{r=1}^\infty$  of eigenfunctions in $L_2(\sU)$, and a decreasing, non-negative sequence $\{\lambda_{0r}\}_{r=1}^\infty$ of eigenvalues, such that 
\begin{equation}\label{eqn:eigenfunction}
	\int_{\sU} \mathcal{K}(u,v) \psi_{0r}(v) \intd v = \lambda_{0r}\psi_{0r}(u), \qquad r = 1, 2, \ldots, 
\end{equation}
and the covariance function can be expanded as
\begin{equation} \label{eqn:truecov:decompose}
	\mathcal{K}(u, v) = \sum\nolimits_{r=1}^\infty \lambda_{0r}\psi_{0r}(u) 	\psi_{0r}(v)\, .
\end{equation}
By taking weak differential under the integral sign \citep{talvila2001necessary} in the relation \eqref{eqn:eigenfunction}, we have 	
\[
\psi_{0r}^{(k)}(u) =  \frac{1}{\lambda_{0r}} \int_{\sU} \frac{\partial^k }{\partial u^k} 	\mathcal{K}(u, v) \psi_{0r}(v) \intd v
\]
for $k=1,\ldots, p$. Via the Cauchy-Schwarz inequality and the square integrability of $\frac{\partial^k }{\partial u^k} 	\mathcal{K}(u, v) $, we know that $\psi_{0r}^{(k)}(u)$ is also square integrable. 
Therefore, the eigenfunction $\psi_{0r}$ belongs to the Sobolev space of order $p (\ge 1)$, i.e.,
\begin{equation} \label{eqn:eigensmooth}
	\psi_{0r} \in L_2^{p}(\sU) := \big\{\psi:\; \psi^{(k)}\in L_2(\sU), \text{ for } k=1,2,\ldots, p \big\}, \quad r= 1, 2, \ldots
\end{equation}

Consider a fix integer $R\geq 1$. Among the equivalence class (where the function only differs on a zero measure set) of $\mathcal{K}$ and $\psi_{0r}$, we can select $\mathcal{K}$ and $\psi_{0r}$ to be a sufficiently smooth version. Then the smoothness implies a uniform upper bound for the covariance function and the leading $R$ eigenfunctions on their compact domains. Therefore, there exists a constant $C_u$ such that
\begin{equation} \label{eqn:uniformupper}
	\sup_{u,v}\, \mathcal{K}^2(u,v) \le C_u/2 \quad \text{and} \quad \max_{r\le R}\sup_u\,  \psi^2_{0r}(u) \le C_u/2.
\end{equation}
The smoothness of $\mathcal{K}$  implies  the covariance function $\mathcal{K}$ belongs to the trace class, i.e.,
\begin{equation} \label{eqn:define:Clambda}
	C_{\lambda} := \sum\nolimits_{r=1}^{\infty} \lambda_{0r} = \int_{\sU}  \mathcal{K}(u,u) \intd u< \infty;
\end{equation}
the last inequality holds because the covariance function $\mathcal{K}$ is continuous over the compact domain $\sU\times \sU$.

All random functions $x$ with zero mean and covariance function $K(u,v)$ admit  the Karhunen-Lo{\`e}ve expansion
$x(u) = \sum_{r=1}^\infty \lambda_{0r}^{1/2} \theta_r \psi_{0r}(u),$
where $\theta_r$'s are uncorrelated random variables with zero mean and unit variance, and the convergence is in $L_2$ and uniform in $u$. Our interest lies in estimating the leading $R$ eigenfunctions from the data. In practice, the number $R$ is usually chosen such that the $R$ leading functional principal components explain a desired proportion of variability in the dataset. We treat $R$ as a fixed number in this paper. We write informally that
$\mathcal{K}(u, v) \approx \sum_{r=1}^R \lambda_{0r}\psi_{0r}(u) \psi_{0r}(v)$ and $x(u) \approx \sum_{r=1}^R \lambda_{0r}^{1/2} \theta_r \psi_{0r}(u)$.  

To ensure identifiability of the leading $R$ eigenfunctions, we impose a strictly positive lower bound for the relevant eigen gaps. It is a mild condition that has been used in the literature (see, e.g., \cite{hall2006properties, paul2009consistency}). 

\vspace{-0.2em}
\begin{condition}  \label{ass:eigengap}
	The eigen gaps for the leading $R+1$ eigenvalues have a positive lower bound, i.e., 
	$
	\min_{1\le r < r' \le R+1} (\lambda_{0r}
	- \lambda_{0r'}) \ge C_E > 0,$
	for some strictly positive constant $C_E$.
\end{condition}

\subsection{Sparsely observed functional samples}
\label{sec:meth:sparseSample}

In practice, we have $N$ realized functions $x_1, \ldots, x_N$ of $x$, where $x_n$ is sparsely observed at $M_{n}$ time points $u_{n1},\ldots, u_{nM_n}\in\sU$ for $n=1,\ldots, N$. At $u_{nj}$, the observed function value $y_{nj}$ is $x_n(u_{nj})$ corrupted by a noise variable, i.e.$y_{nj} = x_{n}(u_{nj} )+ \epsilon_{nj}$,
for $n=1,\ldots, N$, and $j = 1,2,\ldots, M_{n}$.  
The noise variables $\epsilon_{nj}$'s are independent with zero mean and finite variance $\sigma_e^2 = \Expect \epsilon_{nj}^2$. For the $n$-th function, we define $\vy_n = (y_{n1},\ldots, y_{nM_n})\trans$ and obtain a one-sample covariance matrix $\mS_n = \vy_n\vy_n\trans$. The matrix $\mS_n$ is an unbiased estimate of the true covariance matrix of $\vy_n$, which is
\begin{equation}   \label{eqn:trueCovMat}
	\mK_n = \Var(\vy_n)= \big[\mathcal{K}(u_{nj}, u_{nj'})\big]_{j,j'} + \sigma_e^2\mI.
\end{equation}
Collectively, the  matrices $\mS_1,\ldots, \mS_N$ provide the second moment information about 
the true covariance function $\mathcal{K}$ at the random time points $u_{nj}$'s.
Our goal is to recover the true leading eigenfunctions $\psi_{01},\ldots, \psi_{0R}$ from the sample covariance matrices $\mS_1,\ldots, \mS_N$.

Our theoretical analysis focuses on the situation of sparse functional observations, where the number of observation points $M_n$ has a fixed lower and upper bound. 

\begin{condition} \label{ass:density}
	i)  For some fixed constants $\underline{M}>1$  and $\overline{M} < \infty$, $M_n$ satisfies that $\underline{M}\le M_n \le \overline{M}$ for all $n$. 
	ii) The observation times $u_{nj}$'s are independently sampled according to a  density $G(u)$, whose support is $\sU = [0,1]$. There exist positive constants $c_g$ and $C_g$ such that $0<c_g < G(u) < C_g$, for all $u\in \sU$.
\end{condition}

Part ii) in Condition~\ref{ass:density} means that the probability that $x(u)$ will be observed at $u$ (with noise added) is of the same order of magnitude for all $u \in \mathcal{U}$, i.e., $c_g/C_g < G(u)/G(u') < C_g/c_g$ for all $u, u'\in \mathcal{U}$.

\begin{condition} \label{ass:moment}
	i) The noise variables $\epsilon_{nj}$'s are independent of each other with zero mean and finite variance $\sigma_e^2$. 
	There exists a constant $C_m(\ge 1)$ such that $\Expect \epsilon_{nj}^4\le C_m\sigma_e^4.$\\
\hspace{30pt}	ii) Let $\vx_n = (x_n(u_{n1}), \ldots, x_n(u_{nM_n}))\trans$ be the signal vector and $\vu_n = (u_{n1},\ldots, u_{nM_n})\trans$ be the observational time points. It holds, 	for any fixed vector $\vv\in\bbR^M$, that
	\begin{equation}\label{eqn:ass:moment}
		\Expect \big\{\big(\vx_n\trans \vv\big)^4 | \vu_n\big\}\le C_m
		\Big[\Expect\big\{\big(\vx_n\trans \vv\big)^2 | \vu_n\big\}\Big]^2.
	\end{equation}
\end{condition}

Condition~\ref{ass:moment} is a mild condition that requires the random variables to have finite fourth moment.  
A condition similar to Part ii) was used in~\cite{cai2010nonparametric}, where they bounded the fourth moment of the integral of $\vx_n$ by the square of its second moment. Other forms of the fourth moment assumption were used for the local smoothing methods \citep{hall2006properties, li2010uniform}. A stronger (sub-)Gaussian assumption was used in \cite{paul2009consistency}.

\subsection{An working model based on spline approximations}\label{sec:meth:spline}

Our estimation method relies on an approximate working model that is built upon a class of finite-rank covariance functions defined in a space of tensor product splines. The covariance functions and the corresponding eigenfunctions of the working model are used to approximate the true covariance function and the true eigenfunctions, respectively. 

Consider a $K$-dimensional linear space of spline functions with degree $m \geq 1$ (or equivalently order $m+1$), defined on the domain $\sU$. Let $\vb(\cdot)\in\bbR^K$ be a vector whose elements form an orthonormal basis for this space, i.e., $\int_\sU \vb(u)\vb\trans(u)\intd u = \mI$. Let $\mW\in\bbR^{K\times K}$ be a rank-$R$ positive semi-definite matrix with the eigen-decomposition $\mW = \mU\mD\mU\trans$, where $\mU\in\bbR^{K\times R}$ has orthonormal columns and $\mD=\diag(\lambda_{1},\ldots, \lambda_{R})\in\bbR^{R\times R}$ is a diagonal matrix. This is the same matrix parameterization as adopted in \cite{paul2009consistency, peng2009geometric}. The working model assumes the following rank-$R$ covariance function 
\begin{equation}\label{eq:working-model}
	\CCov(u,v)={\vb}\trans(u)\mW\vb(v)= {\vb}\trans(u)\mU\mD\mU\trans\vb(v),
\end{equation}
which is a tensor product spline. Note that the working model \eqref{eq:working-model} has the parameter $(\mU, \mD)$, which determines $\CCov(u,v)$. Let $\vu_r$ be the $r$-th column of $\mU$ and
let $\psi_r(u) = \vu_r\trans\vb(u)$. It is easy to see that
\[
\int_\sU \CCov(u,v) \psi_r(v) \intd v = \lambda_{r}\psi_{r}(u).
\]
Thus the working model covariance function $\CCov$ has $\psi_r(u)$ as its eigenfunction and $\lambda_{r}$ as its corresponding eigenvalue, $r=1,\ldots, R$. 
We also have that
\begin{equation} \label{eqn:modeleigenfun:delta}
	\int_{\sU} \psi_r(u)\psi_{r'}(u) \intd u = \vu_{r}\trans \Big\{\int_{\sU} \vb(u)\vb\trans(u)\intd u\Big\} \vu_{r'}= \vu_{r}\trans\vu_{r'} = \delta_{rr'},
\end{equation}
where $\delta_{rr'}=1$ if $r=r'$ and $\delta_{rr'}=0$ if $r\neq r'$.
In this formulation, $K$ is allowed to grow with the sample size $N$, and therefore $\CCov(u,v)$ and $\psi_r(u) $ all implicitly depend on $N$.

Using the working model inevitably induces an error due to spline approximation of unknown functions. To quantify this ``approximation error'', we construct the ``optimal parameters'' in the working model that provide the best approximations to the true covariance function and eigenfunctions. 
These best approximations are the direct targets of an estimation method based on the working model. 
To this end, we first define the optimal spline approximations of the leading $R$ true eigenfunctions, and then combine these spline-approximated eigenfunctions with the true eigenvalues to obtain the optimal rank-$R$ covariance function of the working model.

Specifically, let $\bar{\CCov}^*(u,v) = \vb\trans(u)\bmW_K\vb(v)$ be the best tensor product spline approximation of the true covariance function without the rank-$R$ constraint, where 
\begin{equation} \label{eqn:tensorproductOpt}
		\bmW_K:=\argmin_{\mW\in\bbR^{K\times K},\, \mW\trans = \mW} \int_{\sU^2} \big\{\mathcal{K}(u,v) - \vb\trans(u)\mW\vb(v)\big\}^2\intd u\intd v.
\end{equation}
Denote the $r$-th eigenvector (ordered according to eigenvalues) of $\bmW_K$ as $\bar{\vu}_{r}$.
Then the spline function $\bar{\psi}_r(u) = \vb\trans(u)\bar{\vu}_{r}$, the $r$-th eigenfunction of $\bar{\CCov}^*(u,v)$, is regarded as an optimal spline approximation of the true $\psi_{0r}$ in the working model space.

Let $\bmU = \big(\bar{\vu}_{1},\ldots, \bar{\vu}_{R}\big)$.
Further set $\bmD = \diag(\lambda_{01},\ldots, \lambda_{0R})$, a diagonal matrix with the ordered leading $R$ true eigenvalues of $\mathcal{K}$ as its diagonal entries. Let $\bmW = \bmU\bmD\bmU\trans$ and consider
\begin{equation} \label{eqn:oraclerankR}
	\bar{\CCov}(u,v): = \vb\trans(u) \bmW \vb(v) 
	= \sum_{r=1}^{R} \lambda_{0r} 
	\bar{\psi}_r(u) \bar{\psi}_r(v).
\end{equation}
The rank-$R$ tensor product spline function $\bar{\CCov}(u,v)$ is the optimal approximation of the true covariance function $\mathcal{K}(u,v)$ in the working model.

The next two results characterize the properties of $\bar\psi_r$'s and $\bar{\CCov}(u,v)$. The proof can be found in Section~
S.6 of the Supplementary Material \cite{he2023penalized}.

\begin{proposition} \label{prop:approximationerror:eigenfun}
	Set $\zeta = p \wedge(m+1)$. 
	Denote $(x)_+ =x$ if $x>0$ and $(x)_+ =0$ if $x\le 0$. Then, 
	
	(i) It holds that $\,\delta_{K} := \max_{r\le R}\, \|\bar{\psi}_{r} - \psi_{0r}\|_{L_2} =O\big( K^{-\zeta} \big)$.
	
	(ii)	For $q\le m$, there is a constant $C$ such that
	$\,\max_{r\le R} \int_{\sU} \big\{\bar{\psi}^{(q)}_{r}(u)\big\}^2 \intd u \le C K^{2(q-p)_+}.$
\end{proposition}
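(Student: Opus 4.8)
The plan is to analyze the two claims about the optimal spline-approximated eigenfunctions $\bar\psi_r$ by relating them to the unconstrained best tensor-product spline approximation $\bar{\CCov}^*$ of the covariance kernel $\mathcal{K}$, and then applying standard perturbation theory for eigenfunctions together with classical spline approximation estimates. First I would record the key facts about $\bmW_K$: since $\bmW_K$ is the $L_2(\sU^2)$-projection of $\mathcal{K}$ onto the space of symmetric tensor-product splines spanned by $\{b_i(u)b_j(v)\}$, and $\mathcal{K}\in L_2^p(\sU^2)$ by Condition~\ref{ass:covsmooth}, the univariate spline approximation theory (de Boor / Schumaker, cf. the references in \cite{huang2003local,huang2021asymptotic}) gives the bivariate approximation rate $\|\bar{\CCov}^* - \mathcal{K}\|_{L_2(\sU^2)} = O(K^{-\zeta})$ with $\zeta = p\wedge(m+1)$, the exponent being capped at $m+1$ because of the saturation of degree-$m$ splines. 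I would also note that $\bar{\CCov}^*$, being symmetric, has a spectral decomposition $\bar{\CCov}^*(u,v)=\sum_{r} \bar\lambda_r \bar\psi_r(u)\bar\psi_r(v)$ with $\bar\psi_r(u)=\vb\trans(u)\bar\vu_r$ an orthonormal system in $L_2(\sU)$ by \eqref{eqn:modeleigenfun:delta}.

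For part (i), the strategy is a standard Davis--Kahan / $\sin\Theta$ argument. The operator $\CCov^*$ with kernel $\bar{\CCov}^*$ is a perturbation of the operator $\mathcal{K}$ with kernel $\mathcal{K}$, and the Hilbert--Schmidt norm of the difference is exactly $\|\bar{\CCov}^* - \mathcal{K}\|_{L_2(\sU^2)} = O(K^{-\zeta})$. Condition~\ref{ass:eigengap} guarantees a spectral gap of size at least $C_E$ separating each of the leading $R$ eigenvalues from the rest of the spectrum, uniformly in $N$ (note the spline-approximated eigenvalues $\bar\lambda_r$ also converge to $\lambda_{0r}$ at the same rate, so the gap persists for $K$ large). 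The Davis--Kahan theorem then yields $\|\bar\psi_r - \psi_{0r}\|_{L_2} \lesssim \|\bar{\CCov}^* - \mathcal{K}\|_{L_2(\sU^2)}/C_E = O(K^{-\zeta})$ for each $r\le R$, where one chooses the sign of $\bar\psi_r$ to align with $\psi_{0r}$; taking the max over the finitely many $r\le R$ preserves the rate. Some care is needed if eigenvalues are not simple, but Condition~\ref{ass:eigengap} excludes ties among the leading $R+1$ eigenvalues, so each relevant eigenspace is one-dimensional and the argument goes through cleanly.

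For part (ii), I would split into cases according to $q$ versus $p$. When $q\le p$, the target exponent $2(q-p)_+$ is $0$, so it suffices to show $\int \{\bar\psi_r^{(q)}\}^2$ is bounded. Since $\psi_{0r}\in L_2^p(\sU)$ with $q\le p$, $\psi_{0r}^{(q)}\in L_2$; and $\bar\psi_r$ is a degree-$m$ spline with $q\le m$, so $\bar\psi_r^{(q)}$ is well-defined. I would bound $\|\bar\psi_r^{(q)}\|_{L_2}$ by $\|\psi_{0r}^{(q)}\|_{L_2}$ plus the $q$-th derivative of the spline approximation error; the latter is controlled via an inverse (Bernstein-type) inequality for splines, $\|g^{(q)}\|_{L_2} \lesssim K^{q}\|g\|_{L_2}$ for $g$ in the spline space, applied to $g = \bar\psi_r - \Pi\psi_{0r}$ where $\Pi\psi_{0r}$ is a good spline interpolant/quasi-interpolant of $\psi_{0r}$, combined with the approximation bounds $\|\bar\psi_r - \Pi\psi_{0r}\|_{L_2}\lesssim \|\bar\psi_r - \psi_{0r}\|_{L_2} + \|\psi_{0r}-\Pi\psi_{0r}\|_{L_2} = O(K^{-\zeta})$ from part (i) and standard theory, which gives a $K^{q}\cdot K^{-\zeta}$ bound that is $O(1)$ precisely when $q\le\zeta = p\wedge(m+1)$. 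When $q>p$ (but still $q\le m$), one cannot use $\psi_{0r}^{(q)}$, so instead I would directly estimate $\|\bar\psi_r^{(q)}\|_{L_2}$ using the inverse inequality on the full spline $\bar\psi_r$ after subtracting off a degree-$(p-1)$-accurate piece: write $\bar\psi_r = \Pi\psi_{0r} + (\bar\psi_r - \Pi\psi_{0r})$, note $\|(\Pi\psi_{0r})^{(q)}\|_{L_2}\lesssim K^{q-p}\|\psi_{0r}^{(p)}\|_{L_2}$ (lost-derivatives estimate for quasi-interpolants of $p$-smooth functions, again a de Boor--type bound), and $\|(\bar\psi_r-\Pi\psi_{0r})^{(q)}\|_{L_2}\lesssim K^{q}\cdot K^{-p}$ by the inverse inequality and the $O(K^{-p})$ $L_2$ bound on the difference; both terms are $O(K^{q-p})$, giving the claimed $O(K^{2(q-p)_+})$ after squaring.

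The main obstacle I anticipate is part (ii) in the regime $q>p$: it requires a clean ``saturation-free'' inverse estimate that converts $L_2$ control of a spline into $L_2$ control of its $q$-th derivative at the cost of exactly $K^{q}$, combined with a quasi-interpolant whose $q$-th derivative inherits the $K^{q-p}$ growth from the limited smoothness of $\psi_{0r}$ — and one must verify that the eigenvector $\bar\vu_r$ of $\bmW_K$ really does behave like such a quasi-interpolant, which is not immediate from its definition as an eigenvector. The bridge is that $\bar\psi_r$ is $L_2$-close to $\psi_{0r}$ (part (i)) and the spline space has bounded ``condition'' in the sense that the $L_2$ and coefficient norms are equivalent uniformly in $K$ for an orthonormal basis, so differentiation estimates transfer; making this rigorous is the technical heart and is presumably where the Supplementary Material does the work. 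The eigenvalue-gap bookkeeping needed to keep Davis--Kahan uniform in $N$ is a lesser but nontrivial point that I would also flag.
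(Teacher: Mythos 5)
Your proposal is correct and follows essentially the same route as the paper: for (i), the tensor-product spline approximation rate $\|\bar{\CCov}^*-\mathcal{K}\|_{L_2(\sU^2)}=O(K^{-\zeta})$ combined with an eigenfunction perturbation (sin-theta type) bound using the gap $C_E$ from Condition~\ref{ass:eigengap}; for (ii), the decomposition $\bar\psi_r=\Pi\psi_{0r}+(\bar\psi_r-\Pi\psi_{0r})$ with quasi-interpolant derivative estimates and the Bernstein-type inverse inequality $\|g^{(q)}\|_{L_2}\lesssim K^{q}\|g\|_{L_2}$ for splines. The ``obstacle'' you flag about $\bar\vu_r$ behaving like a quasi-interpolant is in fact already disposed of by your own decomposition, since the difference term is a spline whose $L_2$ norm is controlled by part (i) and is then handled by the inverse inequality.
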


The error due to rank-$R$ approximation is measured by 
$\omega_{R}^2 :=  \sup_{u,v} \mathcal{K}_{-}^2(u,v)$
where $\mathcal{K}_{-}(u,v) := \mathcal{K}(u,v) - \sum_{r=1}^{R} \lambda_{0r} \psi_{0r}(u)\psi_{0r}(v)$. 
When all the eigenfunctions $\psi_{0r}(u)$ have a uniform upper bound (i.e. $\max_{r } \sup_u \psi_{0r}^2(u)\le C$ by some constant $C$), we can see that  
$\omega_{R} \lesssim \sum_{r=R+1}^{\infty }  \lambda_{0r}$.
We say that the covariance function $\mathcal{K}(u,v)$ is of rank $R$ if $\omega_R =0$.

\begin{proposition}\label{prop:approximationerror:covfun}
	(i) 	With $\delta_{K}$  defined  in Proposition~\ref{prop:approximationerror:eigenfun}, it holds that 
	\begin{align}
		\|\bar{\CCov}(u,v) - \mathcal{K}(u,v) \|_{L_2(\mathcal{U}^2)}
		&= O\big(\delta_{K} +  \omega_R\big),  \label{eqn:covnfun:oracleerro1}\\
		\|\bar{\CCov}(v,v) - \mathcal{K}(v,v) \|_{L_2(\mathcal{U})}
		&= O\big(\delta_{K} +  \omega_R\big).  \label{eqn:covnfun:oracleerro2}
	\end{align}

	(ii) When $K$ is sufficiently large and $\omega_{R}$ sufficiently small, for $C_u$ in~\eqref{eqn:uniformupper}, it holds that
	\begin{equation}
		\max_{r \le R}	  \sup_u \bar{\psi}^2_{r}(u) \le C_u 
		\quad\text{and}\quad	\sup_{u,v} \bar{\CCov}^2(u,v) \le C_u.
		\label{eqn:uniformupper:part2} 
	\end{equation}
\end{proposition}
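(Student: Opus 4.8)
The plan is to reduce both parts to the single pointwise identity
\[
\bar{\CCov}(u,v) - \mathcal{K}(u,v) \;=\; \sum_{r=1}^{R} \lambda_{0r}\big\{\bar{\psi}_r(u)\bar{\psi}_r(v) - \psi_{0r}(u)\psi_{0r}(v)\big\} \;-\; \mathcal{K}_{-}(u,v),
\]
which is immediate from \eqref{eqn:oraclerankR}, the Mercer expansion \eqref{eqn:truecov:decompose}, and the definition of $\mathcal{K}_{-}$, and then to control the two pieces on the right. Because $\sU^2=[0,1]^2$ has unit Lebesgue measure, $\|\mathcal{K}_{-}\|_{L_2(\sU^2)}\le \sup_{u,v}|\mathcal{K}_{-}(u,v)| = \omega_R$, and similarly $\|\mathcal{K}_{-}(v,v)\|_{L_2(\sU)}\le \omega_R$ since $\mathcal{K}_{-}^2(v,v)\le \omega_R^2$ for every $v$. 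For the rank-$R$ piece I would use the product splitting $\bar{\psi}_r(u)\bar{\psi}_r(v) - \psi_{0r}(u)\psi_{0r}(v) = \{\bar{\psi}_r(u) - \psi_{0r}(u)\}\bar{\psi}_r(v) + \psi_{0r}(u)\{\bar{\psi}_r(v) - \psi_{0r}(v)\}$, which together with $\|\bar{\psi}_r\|_{L_2}=\|\psi_{0r}\|_{L_2}=1$ (orthonormality, see \eqref{eqn:modeleigenfun:delta}) bounds the $L_2(\sU^2)$-norm of the $r$-th summand by $2\lambda_{0r}\|\bar{\psi}_r-\psi_{0r}\|_{L_2}\le 2\lambda_{0r}\delta_K$; summing over $r\le R$ and invoking $\sum_r\lambda_{0r}\le C_\lambda<\infty$ from \eqref{eqn:define:Clambda} yields \eqref{eqn:covnfun:oracleerro1}.

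For the diagonal bound \eqref{eqn:covnfun:oracleerro2} the same identity is used with the factorization $\bar{\psi}_r^2(v)-\psi_{0r}^2(v) = \{\bar{\psi}_r(v)-\psi_{0r}(v)\}\{\bar{\psi}_r(v)+\psi_{0r}(v)\}$, whose $L_2(\sU)$-norm is at most $\|\bar{\psi}_r-\psi_{0r}\|_{L_2}\cdot\sup_v|\bar{\psi}_r(v)+\psi_{0r}(v)|$; this step needs a uniform bound on $\bar{\psi}_r$, which is exactly the content of part (ii), so I would prove (ii) before (i). Granting \eqref{eqn:uniformupper:part2} and \eqref{eqn:uniformupper}, $\sup_v|\bar{\psi}_r(v)+\psi_{0r}(v)|\le \sqrt{C_u}+\sqrt{C_u/2}$, so the $r$-th term is $\lesssim \lambda_{0r}\delta_K$ and summation gives \eqref{eqn:covnfun:oracleerro2}.

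The heart of the argument is part (ii), where the $L_2$ rate $\delta_K = O(K^{-\zeta})$ of Proposition~\ref{prop:approximationerror:eigenfun}(i) must be promoted to a uniform statement. I would insert $\Pi_K\psi_{0r}$, the $L_2$-orthogonal projection of $\psi_{0r}$ onto the $K$-dimensional spline space; since $\bar{\psi}_r - \Pi_K\psi_{0r}$ lies in that space, an inverse inequality of the form $\sup_u|s(u)|\lesssim K^{1/2}\|s\|_{L_2}$ gives $\sup_u|\bar{\psi}_r(u)-\Pi_K\psi_{0r}(u)|\lesssim K^{1/2}(\delta_K+\|\psi_{0r}-\Pi_K\psi_{0r}\|_{L_2})\lesssim K^{1/2-\zeta}$, and together with the $L_\infty$ spline-approximation bound $\sup_u|\psi_{0r}-\Pi_K\psi_{0r}|\to 0$ and $\zeta=p\wedge(m+1)\ge 1>1/2$, this yields $\sup_u|\bar{\psi}_r(u)-\psi_{0r}(u)|\to 0$. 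Hence, for $K$ large, $\sup_u\bar{\psi}_r^2(u)\le(\sqrt{C_u/2}+o(1))^2\le C_u$, the first inequality in \eqref{eqn:uniformupper:part2}; feeding this uniform bound back into the pointwise version of the displayed identity gives $\sup_{u,v}|\bar{\CCov}(u,v)-\mathcal{K}(u,v)|\lesssim K^{1/2-\zeta}+\omega_R$, so $\sup_{u,v}\bar{\CCov}^2(u,v)\le(\sqrt{C_u/2}+o(1)+\omega_R)^2\le C_u$ once $K$ is large and $\omega_R$ small.

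I expect the main obstacle to be precisely this sup-norm upgrade in part (ii): Proposition~\ref{prop:approximationerror:eigenfun} only supplies an $L_2$ rate, and turning it into an $L_\infty$ bound forces the use of an inverse inequality together with an $L_\infty$ spline-approximation estimate, both of which rely on the (quasi-uniform) knot geometry of the spline space, and one must verify that the resulting exponent $1/2-\zeta$ is negative --- guaranteed here by $p\ge 1$ and $m\ge 1$. Everything else --- the two product factorizations, the reduction $\|\mathcal{K}_{-}\|_{L_2}\le\omega_R$ from the unit measure of $\sU^2$, and the summation against $\sum_r\lambda_{0r}\le C_\lambda$ --- is routine triangle-inequality and Cauchy--Schwarz bookkeeping, the only real care being to respect the logical ordering (part (ii) before the diagonal estimate of part (i)).
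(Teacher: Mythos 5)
Your argument is correct and follows essentially the same route as the paper's proof: both split $\bar{\CCov}-\mathcal{K}$ into the rank-$R$ eigenfunction discrepancy (controlled via Proposition~\ref{prop:approximationerror:eigenfun}(i), the orthonormality $\|\bar\psi_r\|_{L_2}=\|\psi_{0r}\|_{L_2}=1$, and the trace bound $\sum_r\lambda_{0r}\le C_\lambda$) plus the rank-truncation tail $\mathcal{K}_-$ bounded pointwise by $\omega_R$, and both obtain part (ii) by upgrading the $L_2$ rate $\delta_K$ to a sup-norm statement for the spline functions $\bar\psi_r$ through an inverse estimate $\|s\|_\infty\lesssim K^{1/2}\|s\|_{L_2}$ (together with an $L_\infty$ spline-approximation bound for $\psi_{0r}$), the resulting exponent $K^{1/2-\zeta}$ vanishing because $\zeta\ge 1$. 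Your ordering—establishing the uniform bound on $\bar\psi_r$ before the diagonal estimate \eqref{eqn:covnfun:oracleerro2}—is the right one and introduces no circularity.
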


Note the difference of the two results in Part (i): \eqref{eqn:covnfun:oracleerro1} quantifies the approximation error for the whole function $\bar{\CCov}(u,v)$, while \eqref{eqn:covnfun:oracleerro2} only measures the integrated error on the diagonal (i.e., $u=v$). 

\subsection{Penalized spline estimation}\label{sec:meth:optimization}

Our estimation uses	the penalized spline method which minimizes a criterion function as a summation of two terms. One term measures the model fidelity to the data, and the other term is a roughness penalty that encourages smoothness of estimated eigenfunctions. 

Recall the spline representation of eigenfunctions in Section~\ref{sec:meth:spline}.
The roughness penalty we consider is the integral of the squared $q$-th $(1\le q \le m)$ derivative of the eigenfuctions
\begin{equation} \label{eqn:penalty}
	\mathcal{P}_\eta(\mU) = 
	\eta \sum_{r=1}^{R}  \int_\sU \big\{\psi^{(q)}_{r}(u)\big\}^2
	\intd u = \eta \, \tr\big( \mU\trans \mGamma \mU\big),
\end{equation}
where $\mGamma = \int \vb^{(q)}(u) \{\vb^{(q)}(u)\}\trans \intd u$ is a $K\times K$ matrix solely depending on the basis functions and $\eta$ is the penalty parameter.
Recall that the working model \eqref{eq:working-model} for the covariance function $\CCov(u,v)$ has the parameter $(\mU, \mD)$.
Let $\mathcal{L}(\mU, \mD,\sigma_e^2)$ be a general loss function to be introduced in Section~\ref{sec:lossfun}. We solve the following minimization problem
\begin{equation}\label{eqn:firstOptimization}
	\begin{aligned}
		\min_{\mU, \mD,\sigma_e^2}\, \;& \ell(\mU,\mD,\sigma_e^2) :=\mathcal{L}(\mU,\mD,\sigma_e^2) +\mathcal{P}_\eta(\mU), \\
		\text{subject to }\, & \mU\in \mathrm{St}(R, K),\;
		\mD = \diag(\lambda_{1},\ldots, \lambda_{R})\in\mathbb{D}_+, \text{ and } \sigma_e^2 >0,
	\end{aligned}
\end{equation}
where $\mathrm{St}(R, K)=\{\mU\in\bbR^{K\times R}:\, \mU\trans\mU = \mI\}$ is the Stiefel manifold \citep{edelman1998geometry}, and $\mathbb{D}_+$ is the set of diagonal matrices with positive diagonal elements. Together, $(\mU,\mD)$ can be viewed as a point of the product manifold 
$\sM = \mathrm{St}(R, K) \times \mathbb{D}_+$. 
We let $(\hat{\mU}, \hat{\mD})$ and $\hat{\sigma}^2_e$ denote the optimal solution to problem \eqref{eqn:firstOptimization}, and denote $\hat{\vu}_{r}$ as the $r$-th column of the solution $\hat{\mU}$, $1\leq r \leq R$. The $r$-th eigenfunction is then estimated by $\hat{\psi}_{r}(u) = \vb\trans(u)\hat{\vu}_{r}$.

In the above formulation, for simplicity of presentation, the same tuning parameter $\eta$ is used for estimating all $R$ eigenfunctions. In practice, different tuning parameters can certainly be used on different eigenfunctions, i.e., one replaces the penalty $\mathcal{P}_\eta(\mU)$ in \eqref{eqn:penalty} by
$\sum_{r=1}^{R}  \eta_r \int_\sU \big\{\psi^{(q)}_{r}(u)\big\}^2\,du$.
The asymptotic results developed in this paper remain valid in this more general setting, as long as the tuning parameters have the same order of magnitude, i.e., $\eta_r \asymp \eta$ for some $\eta$ that satisfies the required conditions.

\section{General divergence loss for FPCA}	\label{sec:lossfun}

This section introduces a general class of loss functions which can be used in~\eqref{eqn:firstOptimization} for estimating functional principal components, and discusses 
a few basic properties of the loss functions in this class.
Many popular loss functions are special cases included in this general class; see
Section S.2 of the Supplementary Material \cite{he2023penalized}.

\subsection{The general divergence loss} \label{sec:lossfun:oracle}

The rank-$R$ model covariance function $\CCov(u,v)$ is used to approximate the true covariance function $\mathcal{K}(u,v)$, and their discrepancy should be  measured based on the sparse observations $\{(u_{nj}, y_{nj})\}_{n,j}$. 
For the $n$-th function in the sample, recall from~\eqref{eqn:trueCovMat} that the true covariance matrix of  $\vy_n = (y_{n1},\ldots, y_{nM_n})\trans$ is   $\mK_n = \big[\mathcal{K}(u_{nj}, u_{nj'})\big]_{j,j'} + \sigma_e^2\mI$.
Correspondingly, the model covariance matrix at the observational time points is 
\begin{equation}\label{eqn:modelCovMat}
	\mC_n = \big[
	\CCov(u_{nj}, u_{nj'})\big]_{jj'} + \sigma_e^2\mI,
\end{equation}
which depends on the model parameter $(\mU, \mD)$ through $\CCov(u,v) = \vb(u) \mU\mD\mU\trans \vb\trans(v)$. In particular, when the parameter is fixed at the optimal parameter $(\bmU, \bmD)$, we have the covariance matrix 
\begin{equation}\label{eqn:oracleCovMat}
	\bmC_n = \big[
	\bar{\CCov}(u_{nj}, u_{nj'})\big]_{jj'} + \sigma_e^2\mI,
\end{equation}
where $\bar{\CCov}(u,v): =  \vb\trans(u) \bmU\bmD\bmU\trans \vb(v).$ 

We exploit the matrix Bregman divergence \citep{dhillon2008matrix, pitrik2015joint} to measure the discrepancy between $\mK_n$ and $\mC_n$. 
In particular, let $\varphi(\cdot):\bbR\to \bbR$ be a strictly convex and twice continuously differentiable function. 
To generalize this function on a positive definite matrix $\mC_n \in\bbR^{M_n\times M_n}$, consider eigendecomposition $\mC_n = \mF_n\mG_n\mF_n\trans$, where the columns of $\mF_n$ contain the eigenvectors of $\mC_n$ and $\mG_n = (g_{n1},\ldots, g_{nM_n})$ is a diagonal matrix of its eigenvalues. 
We denote $\varphi(\mC_n): = \mF_n\varphi(\mG_n)\mF_n\trans$, where $\varphi(\mG_n) = \diag\{\varphi(g_{n1}),\ldots, \varphi(g_{nM_n})\}$ such that $\varphi$ is applied elementwisely to the diagonal elements. 
The function $\varphi$ induces a matrix Bregman divergence for positive definite matrices as 
\begin{equation} \label{eqn:define:divergence}
	\mathcal{D}_{\varphi}(\mK_n || \mC_n) = \tr\big\{\varphi(\mK_n) - \varphi(\mC_n) - \varphi'(\mC_n)(\mK_n - \mC_n)\big\},
\end{equation}
and $\varphi$ is also named the \textit{seed function}.  In particular,
this divergence can be used to gauge the similarity between the true covariance matrix $\mK_n$ and our model covariance matrix $\mC_n$. The divergence is zero if and only if $\mK_n=\mC_n$ (see Lemma~1 of \cite{kulis2009low}).

In practice, the true covariance matrix $\mK_n$ is not observed. We make use of its one-sample estimate $\mS_n = \vy_n\vy_n\trans$, which is an unbiased estimate of $\mK_n$.
Furthermore, notice on the right hand side of~\eqref{eqn:define:divergence}, the first term $\varphi(\mK_n)$ is fixed and does not depend on our model. 
Based on these observations, we remove the first term on the right hand side of~\eqref{eqn:define:divergence}, and replace $\mK_n$ with $\mS_n$ in the last term $ \varphi'(\mC_n)(\mK_n - \mC_n)$.
This leads to the general loss function
\begin{equation} \label{eqn:divergenceloss}
	\mathcal{L} (\mU,\mD,\sigma_e^2) = \frac{1}{N}\sum_{n=1}^{N} \frac{1}{M_n^2}\tr
	\big\{- \varphi(\mC_n) -  \varphi'(\mC_n)(\mS_n - \mC_n)\big\},
\end{equation}
where the matrices $\mC_n$'s on the right hand side depend on the parameters $\mU$ and $\mD$ implicitly through~\eqref{eqn:modelCovMat} with $\CCov(u,v)=\vb\trans(u)\mU\mD\mU\trans\vb(v)$.

Taking expectation conditional on all the observational points $\{u_{nj}\}$, we get the frequentist expected loss function (or called the risk function)
\begin{equation} \label{eqn:divergenceloss:population}
	\mathcal{L}_\infty (\mU,\mD,\sigma_e^2) :=\Expect\big[	\mathcal{L} (\mU,\mD)  | \{u_{nj}\}  \big]
	= \frac{1}{N} \sum_{n=1}^N  \frac{1}{M_n^2}	\mathcal{D}_{\varphi}(\mK_n || \mC_n)  + \mathrm{Const.},
\end{equation}
where $\mathrm{Const.}= \sum_{n=1}^N \tr\{-\varphi(\mK_n)\}$ is a constant that does not depend on the parameter $(\mU, \mD)$. The expected loss function is minimized when $\mC_n=\mK_n$ for all $n$, or when $\CCov(\cdot,\cdot)\equiv \mathcal{K}(\cdot,\cdot)$. 

\subsection{Choice of seed function}	 \label{sec:lossfun:seed}

We require  the first-order derivative  $\varphi'$ of the seed function to be matrix monotone. More precisely, the function $\varphi'$ is called \textit{matrix monotone}, if for two positive definite matrices $\mA,\mB$ of the same size, $\mA\succeq \mB$ implies $\varphi'(\mA)\succeq \varphi'(\mB)$. The  collection of seed functions with matrix monotone first-order derivatives gives a class of useful matrix divergences. For example, the divergence with $\varphi(x) = x\log(x) - x$ is the \textit{von Neumann divergence}, the divergence with $\varphi(x) = -\log(x)$ is the \textit{LogDet divergence}, and  $\varphi(x) = x^2$ leads to the \textit{Frobenius norm loss}~\citep{kulis2009low}. 
These correspond to some popular choices of loss function in the literature of functional principal component analysis \citep{cai2010nonparametric, paul2009consistency}. See Section~
S.2 of the Supplementary Material \cite{he2023penalized} for details.

A matrix monotone function $\varphi'$ enjoys a set of appealing properties. In particular, it is stable with respect to perturbation (see Lemma~
S.3.1 in the Supplementary Material \cite{he2023penalized}). More importantly, an arbitrary matrix monotone function $\varphi'$ has the following general expression
\begin{align}\label{eqn:monotonephi}
	\varphi'(x) =  \alpha+ \beta x + 
	\int_{0}^{\infty}\Big(\frac{\xi}{\xi^2+1} - \frac{1}{\xi+x} \Big)\intd\mu(\xi),
\end{align}
for some $\alpha\in\bbR$, $\beta \ge 0$, and $\mu$ is a non-negative measure satisfying 
\begin{equation}\label{eqn:monotoneCmu}
	C_{\mu} = \int_0^\infty \frac{1}{\xi^2+1} \intd \mu(\xi) < \infty.
\end{equation}
See, for example, Eqn~(V.49) and Eqn~(V.50) in \cite{bhatia2013matrix}. 
We require $\beta + C_\mu >0$ to ensure strict convexity of $\varphi$, as in this case it holds that
$$
\varphi''(x) =   \beta + 
\int_{0}^{\infty}  \frac{1}{(\xi+x)^2} \intd\mu(\xi) \ge \beta + 
\int_{0}^{\infty}  \frac{1}{2(\xi^2+x^2)} \intd\mu(\xi) \ge
\beta +\frac{ C_{\mu} }{ 2(x^2 \vee 1) } > 0.
$$
More examples and  properties of matrix monotone functions can be found in Chapter~V of~\cite{bhatia2013matrix} and Chapter~4 of~\cite{hiai2014introduction}.

\subsection{Local strong convexity of the matrix Bregman divergence}

Under the matrix monotone requirement on $\varphi'$,  local strong convexity of  $\mathcal{D}_{\varphi}(\mK_n || \mC_n)$ in~\eqref{eqn:divergenceloss:population} can be established. According to  \cite{pitrik2015joint} (see Lemma~
S.4.1 and Equation~
(S.1) in Section~
S.4 of the Supplementary Material \cite{he2023penalized}), 
$\mathcal{D}_{\varphi}(\mK_n || \mC_n) $ can be expressed as an integral 
\begin{align} \label{loss:intExp}
	\mathcal{D}_{\varphi}(\mK_n || \mC_n) =\int_0^1 s \times \tr \Big\{ (\mC_n-\mK_n)
	D\varphi'\big(\widetilde{\mC}_n(s) \big)[\mC_n-\mK_n]\Big\} \intd s,
\end{align}
where $\widetilde{\mC}_n(s) = \mK_n+s(\mC_n-\mK_n)$,  $s\in[0,1]$, is a linear interpolation of $\mK_n$ and $\mC_n$ in the corresponding matrix space, and
$D\varphi'(\mA)[\mH] := \frac{\intd}{\intd t} \varphi'(\mA+t\mH)\big|_{t=0}$
is the \textit{directional derivative} of $\varphi'$ at $\mA$ in the direction of $\mH$. 
Using the general expression~\eqref{eqn:monotonephi} of the matrix monotone function $\varphi'$, we have that
\begin{align} \label{eqn:matrixmonotone:derivative}
	D\varphi'(\mA)[\mH] = \beta \mH + \int_0^\infty (\xi\mI + \mA)^{-1} \mH (\xi\mI + \mA)^{-1} \intd \mu(\xi).
\end{align}
Denote $\widetilde{\mDelta}_n = \mK_n^{-1/2}(\mC_n-\mK_n)\mK_n^{-1/2}$ 
and $W_n(\xi,s) = (\xi\mK^{-1}_n + \mI+ s\widetilde{\mDelta}_n)^{-1/2}$. 
Plugging these expressions into~\eqref{loss:intExp}, we obtain
\begin{equation}\label{eq:matBregman-divergence}
	\mathcal{D}_{\varphi}(\mK_n || \mC_n)  = \frac{\beta}{2} \| \mC_n-\mK_n\|_F^2+
	\int_0^1 \int_0^\infty s \times \| W_n(\xi,s)\widetilde{\mDelta}_n W_n(\xi,s)\|_F^2\intd \mu(\xi)  \intd s.
\end{equation}
On the right hand side of \eqref{eq:matBregman-divergence}, the first term is simply the Frobenius norm of the difference between $\mC_n$ and $\mK_n$, while the second term is a mixture of losses scaled by the weight matrix $W_n(\xi,s)$.  

Lemma~\ref{lemma:basicconvexity} below shows that,
when $\mC_n$ and $\mK_n$ are close to each other, $\mathcal{D}_{\varphi}(\mK_n || \mC_n)$ is strongly convex. The proof is given in Section~
S.4 of the Supplementary Material \cite{he2023penalized}.
\vspace{-0.5em}
\begin{lemma} \label{lemma:basicconvexity}
	When $\|\mK_n^{-1/2}(\mC_n-\mK_n)\mK_n^{-1/2}\|<1/2$, each divergence term $\mathcal{D}_{\varphi}(\mK_n || \mC_n)$ is strongly convex such that
	$\mathcal{D}_{\varphi}(\mK_n || \mC_n) \ge 
	\frac{\beta}{2} \| \mC_n-\mK_n\|_F^2+
	\frac{C_\mu}{5}   \| \mK_n^{-1/2}(\mC_n-\mK_n)\mK_n^{-1/2}\|_F^2,$
	where $\beta$ and $C_\mu$ are constants defined in \eqref{eqn:monotonephi} and \eqref{eqn:monotoneCmu}, respectively.
\end{lemma}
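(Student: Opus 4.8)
The plan is to start from the exact integral representation \eqref{eq:matBregman-divergence} and lower-bound the mixture term uniformly in $\xi$ and $s$. The Frobenius part $\frac{\beta}{2}\|\mC_n-\mK_n\|_F^2$ is already in the desired form, so the whole task reduces to showing
$$
\int_0^1\int_0^\infty s\,\|W_n(\xi,s)\widetilde{\mDelta}_nW_n(\xi,s)\|_F^2\intd\mu(\xi)\intd s
\;\ge\; \frac{C_\mu}{5}\,\|\widetilde{\mDelta}_n\|_F^2,
$$
where $\widetilde{\mDelta}_n=\mK_n^{-1/2}(\mC_n-\mK_n)\mK_n^{-1/2}$ and, by hypothesis, $\|\widetilde{\mDelta}_n\|<1/2$. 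First I would record the elementary fact that for any symmetric $\mH$ and any positive definite $\mA$ one has $\|\mA\mH\mA\|_F\ge \lambda_{\min}(\mA)^2\,\|\mH\|_F$ — equivalently $\|\mA\mH\mA\|_F^2\ge \lambda_{\min}(\mA)^4\|\mH\|_F^2$ — which follows by writing the Frobenius norm in the eigenbasis of $\mA$ and bounding each entrywise factor $a_ia_j\ge\lambda_{\min}(\mA)^2$.

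Next I would apply this with $\mA=W_n(\xi,s)=(\xi\mK_n^{-1}+\mI+s\widetilde{\mDelta}_n)^{-1/2}$ and $\mH=\widetilde{\mDelta}_n$, so the integrand is bounded below by $s\,\lambda_{\min}(W_n(\xi,s))^4\|\widetilde{\mDelta}_n\|_F^2 = s\,\|W_n(\xi,s)^{-2}\|^{-2}\|\widetilde{\mDelta}_n\|_F^2$. Since $W_n(\xi,s)^{-2}=\xi\mK_n^{-1}+\mI+s\widetilde{\mDelta}_n$ and $\|s\widetilde{\mDelta}_n\|\le s/2\le 1/2$ while $\xi\mK_n^{-1}\succeq 0$, I get the operator-norm bound
$$
\|W_n(\xi,s)^{-2}\| \;\le\; \xi\|\mK_n^{-1}\| + 1 + \tfrac12 \;\le\; \tfrac32 + \xi\|\mK_n^{-1}\|.
$$
Actually, to get the clean constant $C_\mu/5$ I expect one wants a bound that combines with the $\frac{1}{\xi^2+1}$ weight hidden in the definition \eqref{eqn:monotoneCmu} of $C_\mu$; so I would instead keep $\|W_n(\xi,s)^{-2}\|\le \frac32 + \xi\lambda_{\min}(\mK_n)^{-1}$ and then observe that a further structural input is needed: the covariance matrices $\mK_n$ are bounded below (this comes from $\mK_n=[\mathcal{K}(u_{nj},u_{nj'})]+\sigma_e^2\mI\succeq\sigma_e^2\mI$, so $\lambda_{\min}(\mK_n)\ge\sigma_e^2$), but the ratio $\xi/\sigma_e^2$ is unbounded as $\xi\to\infty$. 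Resolving this is where the $\frac{1}{\xi^2+1}$ normalization in $C_\mu$ is essential — one performs the $\mu$-integral and the factor $(\frac32+\xi/\sigma_e^2)^{-2}$ must be compared against $(\xi^2+1)^{-1}$; for $\xi$ large these are comparable up to a $\sigma_e^4$-type constant, and for $\xi$ small $(\frac32+\xi/\sigma_e^2)^{-2}\gtrsim 1\gtrsim(\xi^2+1)^{-1}$.

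I anticipate the main obstacle is precisely the book-keeping that turns the crude pointwise bound into the stated universal constant $C_\mu/5$: one must integrate $s$ over $[0,1]$ (giving the factor $\int_0^1 s\intd s=\tfrac12$, which already eats a factor $2$), then combine the $\xi$-dependent operator-norm bound with the weight $\mu$ so that what emerges is a constant multiple of $C_\mu$, and finally check that all the slack — the $\tfrac12$ from the $s$-integral, the $\tfrac12$ perturbation bound, the comparison of $(\tfrac32+\xi/\lambda_{\min})^{-2}$ with $(\xi^2+1)^{-1}$ — multiplies out to something $\ge 1/5$. Since the statement only claims the constant $1/5$ (not a sharp constant), I would choose the bounds generously: use $\|s\widetilde\mDelta_n\|\le 1/2$ to get $W_n(\xi,s)^{-2}\preceq \xi\mK_n^{-1}+\tfrac32\mI$, hence $\lambda_{\min}(W_n(\xi,s))^4\ge(\tfrac32+\xi/\sigma_e^2)^{-2}$, and then verify the elementary inequality $\tfrac12(\tfrac32+\xi t)^{-2}\ge \tfrac15\,(t\vee 1)^{-2}\cdot(\xi^2+1)^{-1}$ for all $\xi\ge0$, $t>0$ (say with $t=1/\sigma_e^2$), which after multiplying through by $C_\mu$ and recalling $C_\mu=\int(\xi^2+1)^{-1}\intd\mu$ delivers exactly $\frac{C_\mu}{5}\|\widetilde\mDelta_n\|_F^2$ up to absorbing the $\sigma_e$-dependence into the standing constants of the paper. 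The remaining work is then just assembling these pieces and adding back the $\frac{\beta}{2}\|\mC_n-\mK_n\|_F^2$ term verbatim from \eqref{eq:matBregman-divergence}.
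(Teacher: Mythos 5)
Your overall route---start from \eqref{eq:matBregman-divergence}, keep the $\frac{\beta}{2}\|\mC_n-\mK_n\|_F^2$ term verbatim, lower-bound each integrand by $\lambda_{\min}\big(W_n(\xi,s)\big)^4\|\widetilde{\mDelta}_n\|_F^2$, control $\|W_n(\xi,s)^{-2}\|\le \xi\,\lambda_{\min}(\mK_n)^{-1}+\tfrac32$ via $\|s\widetilde{\mDelta}_n\|\le\tfrac12$, and then trade the resulting $\xi$-dependent factor against the weight $(\xi^2+1)^{-1}$ hidden in $C_\mu$---is exactly the continuation that the displayed representation sets up, and structurally it is the right idea. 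The problem is the quantitative endgame: the elementary inequality you rely on, $\tfrac12(\tfrac32+\xi t)^{-2}\ge\tfrac15\,(t\vee 1)^{-2}(\xi^2+1)^{-1}$, is false. Take $t=1$ (the relevant case, since the paper fixes $\sigma_e^2=1$, so $\mK_n\succeq \mI$ and $\xi\mK_n^{-1}\preceq\xi\mI$) and $\xi=1$: the left side equals $1/12.5$ while the right side equals $1/10$. Along your chain the best available constant is $\min_{\xi\ge 0}\frac{\xi^2+1}{2(\xi+3/2)^2}=\frac{2}{13}$ (attained at $\xi=2/3$), which is strictly below $1/5$; even the sharper variant that keeps the $s$-dependence inside $W_n$ and evaluates $\int_0^1 s\,(\xi+1+s/2)^{-2}\intd s$ exactly gives, at $\xi=1$, approximately $0.093<\tfrac{1}{5(\xi^2+1)}=0.1$. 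So your argument, as organized, proves the lemma with $\tfrac{2C_\mu}{13}$ (or $\tfrac{C_\mu}{9}$ with the cruder $(\xi+\tfrac32)^2\le 2\xi^2+\tfrac92$ step) in place of $\tfrac{C_\mu}{5}$, but it cannot be pushed to the stated constant by bookkeeping alone; obtaining $1/5$ needs a genuinely finer treatment of the $(\xi,s)$-integral than the pointwise $\lambda_{\min}$ bound you use. For the way the lemma is consumed downstream (Lemma~\ref{lemma:convexLower}, Proposition~\ref{prop:objDiff:metric}) any fixed positive multiple of $C_\mu$ would do, but as a proof of the statement as written the constant falls short.

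A second, smaller issue is your closing remark about ``absorbing the $\sigma_e$-dependence into the standing constants of the paper.'' The constant in the lemma is exactly $C_\mu/5$, with no dependence on $\mK_n$ or $\sigma_e^2$, and no such absorption is possible: if $\lambda_{\min}(\mK_n)$ were allowed to be small, the $\mu$-part of the divergence could be made arbitrarily small relative to $\|\widetilde{\mDelta}_n\|_F^2$ (already in the scalar case, letting $\lambda_{\min}(\mK_n)\to 0$ kills the factor $(\xi\lambda_{\min}(\mK_n)^{-1}+1+s\widetilde{\mDelta}_n)^{-2}$ for every $\xi>0$). The correct move is not to absorb anything but to invoke the paper's normalization $\sigma_e^2=1$ (stated in Section~\ref{sec:roadmap}) together with $\mK_n=[\mathcal{K}(u_{nj},u_{nj'})]_{jj'}+\sigma_e^2\mI\succeq\sigma_e^2\mI$, which is what legitimizes setting $t=1$ in your bound; you identified the right ingredient but framed its use incorrectly.
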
 

\section{Main result: convergence rates}
\label{sec:roadmap}

This section presents the main result of this paper: the convergence rates of the penalized spline estimator obtained by solving the minimization problem~\eqref{eqn:firstOptimization} with the general divergence loss defined in~\eqref{eqn:divergenceloss}. Recall that $\zeta = p \wedge(m+1)$.  

\begin{theorem} \label{thm:main}
	Assume Conditions $\ref{ass:covsmooth}$--$\ref{ass:moment}$ hold and that
	the covariance function $\mathcal{K}(u,v)$ is of rank $R$ (i.e. $\omega_R=0$).
	Assume $N, K\to\infty$,
	$K^2\log(K)/N\to 0$, 
	and $\eta K^{1 + 2(q-p)_+}\to 0$.
	Then, there exists a local estimator $(\hat{\mU},\hat{\mD}, \hat{\sigma}_e^2)$ of~\eqref{eqn:firstOptimization} such that for $1\leq r \leq R$, 
	\begin{equation}\label{eqn:main:generalresult}
		\|\hat{\psi}_{r} - \psi_{0r} \|^2  + \eta  J (\hat{\psi}_{r} )  \\
		= O_p\Big(\frac{1}{N \eta^{1/(2q)}} \wedge \frac{K}{N}+ \eta K^{2 (q-\zeta)_+}
		\vee \frac{1}{K^{2\zeta}}\Big),
	\end{equation}
	where $\hat{\psi}_{r}(u) = \vb\trans(u)\hat{\vu}_{r}$ for $\hat{\vu}_r$ being the $r$-th column of $\hat{\mU}$.
\end{theorem}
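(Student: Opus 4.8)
The plan is to localize the analysis in a chart of the product manifold $\sM=\mathrm{St}(R,K)\times\mathbb{D}_+$ around the oracle parameter $\bar\theta=(\bmU,\bmD,\sigma_e^2)$ (with $\bmD$ the true eigenvalues and $\bar\sigma_e^2$ the true noise variance), using the local geometry of Section~\ref{sec:manifold} to resolve the sign/rotation ambiguity of eigenvectors. I would work with the estimation radius
\[
d^2(\theta):=\sum_{r=1}^R\|\psi_r-\bar\psi_r\|_{L_2}^2+\|\mD-\bmD\|_F^2+|\sigma_e^2-\bar\sigma_e^2|^2=\|\mU-\bmU\|_F^2+\|\mD-\bmD\|_F^2+|\sigma_e^2-\bar\sigma_e^2|^2,
\]
the last identity using $\int\vb\vb\trans=\mI$; since $\mathcal{K}$ has rank $R$, Proposition~\ref{prop:approximationerror:eigenfun} gives $\delta_K:=\max_r\|\bar\psi_r-\psi_{0r}\|_{L_2}=O(K^{-\zeta})$, so via $\|\hat\psi_r-\psi_{0r}\|^2\le2d^2(\hat\theta)+2\delta_K^2$ it suffices to bound $d^2(\hat\theta)+\eta\sum_rJ(\hat\psi_r)$ by the claimed rate.

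\textbf{Basic inequality and risk lower bound.} I would construct $\hat\theta$ as a minimizer of $\ell=\mathcal{L}+\mathcal{P}_\eta$ over a slowly shrinking ball about $\bar\theta$ and check it is interior for $N$ large, hence a genuine local solution of~\eqref{eqn:firstOptimization} (the global version, over a compact set, being deferred to Section~\ref{sec:consistency}). From $\ell(\hat\theta)\le\ell(\bar\theta)$, the bound $\mathcal{P}_\eta(\bmU)=\eta\sum_rJ(\bar\psi_r)=O(\eta K^{2(q-\zeta)_+})$ from Proposition~\ref{prop:approximationerror:eigenfun}(ii) (with $(q-\zeta)_+=(q-p)_+$ since $q\le m$), and the splitting $\mathcal{L}=\mathcal{L}_\infty+\nu_N$, $\nu_N(\theta):=\mathcal{L}(\theta)-\mathcal{L}_\infty(\theta)$, which involves only $\mS_n-\mK_n$ because $\Expect[\mS_n\mid\{u_{nj}\}]=\mK_n$, one gets
\[
\big[\mathcal{L}_\infty(\hat\theta)-\mathcal{L}_\infty(\bar\theta)\big]+\eta\sum_{r=1}^RJ(\hat\psi_r)\le\big[\nu_N(\bar\theta)-\nu_N(\hat\theta)\big]+O\big(\eta K^{2(q-\zeta)_+}\big).
\]
For the left-hand side I would use the integral representation~\eqref{eq:matBregman-divergence} and the local strong convexity Lemma~\ref{lemma:basicconvexity}, together with the eigen-gap Condition~\ref{ass:eigengap} and the manifold geometry, to show (the work of Section~\ref{sec:expectedloss}) that on the localizing ball $\mathcal{L}_\infty(\hat\theta)-\mathcal{L}_\infty(\bar\theta)\ge c_1d^2(\hat\theta)-c_2\delta_K^2$ for some $c_1>0$; the eigen-gap is what makes the second-order form nondegenerate along the Stiefel tangent directions, and the $\delta_K^2$ slack arises because $\bar\theta$ does not exactly minimize $\mathcal{L}_\infty$ and is controlled by Propositions~\ref{prop:approximationerror:eigenfun}--\ref{prop:approximationerror:covfun}.

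\textbf{The empirical process — the hard part.} The main obstacle is bounding $\nu_N(\hat\theta)-\nu_N(\bar\theta)$ sharply. I would Taylor-expand $\varphi'(\mC_n)$ about $\bmC_n$ to split this into a linear term $\langle\mathbf{G}_N,\hat\theta-\bar\theta\rangle$, where $\mathbf{G}_N=\frac1N\sum_n\frac1{M_n^2}D\varphi'(\bmC_n)[\,\cdot\,](\mS_n-\mK_n)$, plus a remainder of higher order in $d(\hat\theta)$. Using the new concentration bounds for empirical processes indexed by a manifold from Section~\ref{sec:manifold}, the design Condition~\ref{ass:density}, and the fourth-moment Condition~\ref{ass:moment}, I would bound $\mathbf{G}_N$ in the norm dual to $d^2(\cdot)+\eta\sum_rJ(\cdot)$ by $O_p\big((\mathsf{df}_\eta/N)^{1/2}\big)$, with effective dimension $\mathsf{df}_\eta:=\tr\{(\mI+\eta\mGamma)^{-1}\}\asymp K\wedge\eta^{-1/(2q)}$ (from the spectral growth of the spline roughness matrix $\mGamma$), and use a peeling argument over dyadic shells $\{2^{j-1}\rho\le d(\hat\theta)<2^j\rho\}$ to control the remainder uniformly. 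This step is the crux: $\nu_N$ is a non-Lipschitz matrix-valued process indexed by a manifold, it must be treated under only fourth-moment tails, and one has to extract precisely the penalty-dependent complexity $K\wedge\eta^{-1/(2q)}$ rather than the crude $K$.

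\textbf{Conclusion.} Combining the three displays and $|\langle\mathbf{G}_N,\hat\theta-\bar\theta\rangle|\le\tfrac{c_1}2d^2(\hat\theta)+C\|\mathbf{G}_N\|_\ast^2$ would give
\[
\tfrac{c_1}2d^2(\hat\theta)+\eta\sum_{r=1}^RJ(\hat\psi_r)\lesssim\frac{\mathsf{df}_\eta}N+\eta K^{2(q-\zeta)_+}+K^{-2\zeta}+(\text{higher order}),
\]
the higher-order remainder being absorbed since $d(\hat\theta)\to0$ under $K^2\log K/N\to0$ and $\eta K^{1+2(q-p)_+}\to0$. Then $d^2(\hat\theta)+\eta\sum_rJ(\hat\psi_r)=O_p\big(\frac1{N\eta^{1/(2q)}}\wedge\frac KN+\eta K^{2(q-\zeta)_+}\vee K^{-2\zeta}\big)$, and $\|\hat\psi_r-\psi_{0r}\|^2\le2d^2(\hat\theta)+2\delta_K^2$ yields~\eqref{eqn:main:generalresult}.
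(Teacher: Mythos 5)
Your overall architecture is the same as the paper's: localize on the product manifold around the oracle spline parameter $(\bmU,\bmD)$, split the penalized loss into the penalized risk difference plus the centered empirical process $\mathcal{G}$, use local strong convexity of the Bregman risk (with the eigen-gap condition and the empirical-norm equivalence, which is where $K^2\log K/N\to0$ enters), control the stochastic term with effective dimension $K\wedge\eta^{-1/(2q)}$, and finish with the approximation bounds of Propositions~\ref{prop:approximationerror:eigenfun}--\ref{prop:approximationerror:covfun}. Where you differ is the localization device. The paper never invokes a basic inequality for a constrained minimizer: it fixes the radius $\tau_N$ in advance, defines the neighborhood $\overline{\sN}(a\tau_N)$ directly in the penalty-weighted metric $\|\mDelta_{u1}\|_{\eta}+\|\mDelta_{d1}\|_F$ (an ellipsoid in the tangent space), and shows $\ell(\mU,\mD)>\ell(\bmU,\bmD)$ on the boundary of the geodesic image, which yields an interior local minimizer in one stroke; the fact that the neighborhood is an $\eta$-ellipsoid is exactly what makes the supremum bound of Proposition~\ref{prop:main:esterror} scale as $N^{-1/2}(\eta^{-1/(4q)}\wedge K^{1/2})\,\delta+o_p(\delta^2)$. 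You instead localize in the plain Frobenius ball and try to recover the penalty-dependent complexity through a dual-norm bound on the linearized score plus peeling. That route can be made to work, but it shifts all of the $\eta$-dependence onto the pairing step, and that is where your writeup has a concrete flaw.

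Specifically, your closing display $|\langle\mathbf{G}_N,\hat\theta-\bar\theta\rangle|\le\tfrac{c_1}{2}d^2(\hat\theta)+C\|\mathbf{G}_N\|_\ast^2$ is inconsistent with the dual norm you need. If $\|\cdot\|_\ast$ is dual to the plain metric $d$, then $\|\mathbf{G}_N\|_\ast^2\asymp K/N$ and you never obtain the $\eta^{-1/(2q)}$ improvement, so the $\tfrac{1}{N\eta^{1/(2q)}}\wedge\tfrac{K}{N}$ term in \eqref{eqn:main:generalresult} is out of reach. If instead $\|\cdot\|_\ast$ is dual to the $\eta$-norm (as your $\mathsf{df}_\eta=\mathrm{tr}\{(\mI+\eta\mGamma)^{-1}\}$ computation presupposes), then Cauchy--Schwarz produces the primal factor $\|\hat\theta-\bar\theta\|_{\eta}$, whose square contains $\eta\sum_r J(\hat\psi_r-\bar\psi_r)$, not just $d^2(\hat\theta)$; this roughness term must be absorbed by the penalty $\eta\sum_r J(\hat\psi_r)$ retained on the left together with $\eta\sum_r J(\bar\psi_r)=O(\eta K^{2(q-p)_+})$, and your inequality as written drops it. This is repairable (expand $J(\hat\psi_r-\bar\psi_r)\le 2J(\hat\psi_r)+2J(\bar\psi_r)$ and take the Young constant small enough that the left-hand penalty dominates), but as stated the final step does not follow; the paper avoids the issue entirely by building $\eta J$ into the neighborhood metric via Lemma~\ref{lemma:controlTangent} and Proposition~\ref{prop:objDiff:metric}. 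Two further points you gloss over: (i) interiority of your ball-constrained minimizer requires the ball radius to shrink strictly more slowly than $\tau_N$ and a uniform (peeled) version of your bound on every shell, which is essentially the boundary comparison the paper performs anyway; and (ii) the ``Taylor expansion'' of $\varphi'(\mC_n)$ with a uniformly $o_p(d^2)$ remainder under only fourth moments is the real content of Proposition~\ref{prop:main:esterror} (done in the paper via the resolvent identity for matrix monotone $\varphi'$, not a generic Taylor expansion), and in your plan it is asserted rather than established.
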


Theorem~\ref{thm:main} directly leads to the rates of convergence in Table~\ref{summary_table}.
Note that, by giving upper bounds on $\|\hat{\psi}_{r} - \psi_{0r} \|^2  + \eta  J (\hat{\psi}_{r})$ instead of  just $\|\hat{\psi}_{r} - \psi_{0r} \|^2$, this result also tells us about the smoothness property of the estimators through the upper bound of the penalty functional $J (\hat{\psi}_{r})$.  The rate of convergence presented in Theorem~\ref{thm:main} is comparable to a recent result obtained in \cite{huang2021asymptotic} in the context of non-parametric regression using penalized splines. On the right hand side of \eqref{eqn:main:generalresult}, the first term corresponds to the estimation error, and the second term corresponds to approximation error and penalty bias. See Section 3 of \cite{huang2021asymptotic} for a comprehensive discussion of the relation of its result with earlier works on rates of convergence of penalized splines, such as \cite{claeskens2009asymptotic,xiao2019asymptotic}.

The condition $K^{2} \log (K) / N \rightarrow 0$ is stronger than the usual condition 
$K/N \rightarrow 0$ or $K\log K/N \rightarrow 0$ used in non-parametric regression \citep{claeskens2009asymptotic,xiao2019asymptotic}. 
We think the $K^2$ cannot be relaxed to $K$ in our condition, because tensor product splines are used to fit the two-dimensional covariance function.

In this article, we will present the proof with a fixed and known $\sigma_e^2$. 
When  $\sigma_e^2$ is unknown,  we only need to augment the parameter space with an additional dimension, and the technical proof  is entirely analogous. Since $\sigma_e^2$ is a scalar parameter, an analysis together with $\sigma_e^2$ does not affect the non-parametric rate of convergence for FPC estimators.  We will thus write $\ell(\mU,\mD,\sigma_e^2)$ as $\ell(\mU,\mD)$, and write $\mathcal{L}(\mU,\mD,\sigma_e^2)$ as $\mathcal{L}(\mU,\mD)$. For simplicity of presentation and  without loss of generality, we also fix $\sigma_e^2=1$ as in the work of \cite{paul2009consistency}.   We now outline the main idea of the proof of Theorem~\ref{thm:main}. The complete proof is given in Section~\ref{sec:theo:mainResult}, based on the technical tools developed in Sections~\ref{sec:manifold} and \ref{sec:expectedloss}.  

Recall the definition of the general divergence loss function
$\mathcal{L}(\mU,\mD)$ given in~\eqref{eqn:divergenceloss} and the corresponding risk function $\mathcal{L}_\infty(\mU,\mD)$ given in~\eqref{eqn:divergenceloss:population}.
The objective function in the minimization problem~\eqref{eqn:firstOptimization},
$\ell(\mU, \mD) = \mathcal{L}(\mU,\mD) +\mathcal{P}_\eta(\mU)$,
is called the penalized loss function (or penalized empirical risk function), while its expectation (conditional on the observational time points), $\ell_\infty(\mU, \mD) = \mathcal{L}_\infty(\mU,\mD) +\mathcal{P}_\eta(\mU)$,
is called the penalized risk function.

Consider the optimal model parameter $(\bmU,\bmD)$ in the approximate working model, as defined in Section~\ref{sec:meth:spline}. We show that there exists a local minimizer of the problem~\eqref{eqn:firstOptimization} inside a suitably defined manifold geodesic neighborhood of $(\bmU,\bmD)$; the radius of the neighborhood measured using a suitable metric 
is roughly the rate of convergence of the estimator.  This amounts to analyzing the difference of the penalized loss $\ell(\mU,\mD)-\ell(\bmU,\bmD)$ locally around  $(\bmU,\bmD)$.

The difference of the penalized loss function at the parameter $(\mU, \mD)$ and at the optimal parameter $(\bmU, \bmD)$ can be written as
\begin{align} 
	\ell(\mU,\mD)-\ell(\bmU,\bmD) & = 
	\big\{	\mathcal{L}(\mU,\mD) +\mathcal{P}_\eta(\mU) \big\}- 
	\big\{	\mathcal{L}(\bar\mU, \bar\mD) +\mathcal{P}_\eta(\bmU)\big\}   \nonumber\\
	& =   \underbrace{\big\{	\mathcal{L}_\infty(\mU,\mD) +\mathcal{P}_\eta(\mU) \big\} }_{\ell_\infty(\mU,\mD)}- \ 
	\underbrace{\big\{	\mathcal{L}_\infty(\bmU,\bmD) +\mathcal{P}_\eta(\bmU)\big\} }_{\ell_\infty(\bmU,\bmD)} \nonumber \\
	&\qquad   + \underbrace{\mathcal{L} (\mU,\mD) -	\mathcal{L}_\infty (\mU,\mD)  - \big\{\mathcal{L} (\bmU,\bmD) -\mathcal{L}_\infty (\bmU,\bmD)\big\} }_{	\mathcal{G}(\mU,\mD) }.  \label{eqn:lossdiff:base}	
\end{align}
Taking the difference of \eqref{eqn:divergenceloss} and \eqref{eqn:divergenceloss:population}, we obtain
\begin{equation}\label{eqn:divergenceloss:centered}
	\mathcal{L} (\mU,\mD) -	\mathcal{L}_\infty (\mU,\mD) 
	=\frac{1}{N}\sum_{n=1}^N \frac{1}{M_n^2} \langle -\varphi'(\mC_n), \,\mS_n - \mK_n\rangle.  
\end{equation}
Using \eqref{eqn:divergenceloss:centered}, the last line of~\eqref{eqn:lossdiff:base} can be written as
\begin{equation} \label{eqn:define:Eprocess}
	\begin{split}
		\mathcal{G}(\mU,\mD)  & :=  \mathcal{L} (\mU,\mD) -	\mathcal{L}_\infty (\mU,\mD)  -\big\{\mathcal{L} (\bmU,\bmD) -\mathcal{L}_\infty (\bmU,\bmD) \big\}\\
		&=  \frac{1}{N}\sum_{n=1}^N \frac{1}{M_n^2} \langle\varphi'(\bmC_n)-\varphi'(\mC_n), \,\mS_n - \mK_n\rangle,
	\end{split}
\end{equation}
which can be viewed as an empirical process indexed by $(\mU,\mD)$.

Based on the decomposition~\eqref{eqn:lossdiff:base}, the remaining of this article will take several steps to complete the proof. First, in Section~\ref{sec:manifold:intro}--\ref{sec:manifold:localTangent}, we develop the product manifold geometry  for the parameter space of the working model. 
Second, in Section~\ref{sec:manifold:EP},
we develop a bound in probability of the supreme of the empirical process $\mathcal{G}(\mU,\mD)$ for $(\mU,\mD)$ in a local geodesic neighborhood of $(\bmU,\bmD)$. Third, in Section~\ref{sec:expectedloss}, we show that 
the difference of the penalized risk, $	\ell_\infty(\mU,\mD) -	\ell_\infty(\bmU,\bmD)$, is lower bounded by a quadratic function of 
the size of the manifold tangent vector. Finally, in
Section~\ref*{sec:theo:mainResult}, we suitably define the size of a geodesic neighborhood of 
$(\bmU,\bmD)$ so that $\ell(\mU,\mD)>\ell(\bmU,\bmD)$ for all boundary points in this neighborhood, and complete the proof.

\vspace{-0.2em}
\begin{remark}~\label{remark:moving-truth}
	The assumption that the covariance function is of finite rank can be relaxed by considering a sequence of true covariance functions that vary with the sample size $N$ such that the rank-$R$ approximation error converges to zero, i.e., $\omega_{R}K^{1/2} \to 0$ as $N \to \infty$.  Under this ``moving truth'' asymptotic setup, the same proof of Theorem~\ref{thm:main} yields the following revision of \eqref{eqn:main:generalresult}:
	\begin{equation}\label{eqn:main:generalresult-2}
		\|\hat{\psi}_{r} - \psi_{0r} \|^2  + \eta  J (\hat{\psi}_{r} )  \\
		= O_p\bigg(\frac{1}{N \eta^{1/(2q)}} \wedge \frac{K}{N}+ \eta K^{2 (q-\zeta)_+}
		\vee \frac{1}{K^{2\zeta}} + \omega_R^2\bigg).
	\end{equation}
	The idea of considering a sequence model was suggested in the work of \cite[][page~1236]{paul2009consistency} where the asymptotics for the un-penalized spline estimation of functional principal components was studied.
\end{remark}

\section{Local manifold geometry and empirical process}\label{sec:manifold}

The main result of this section (presented in Section~\ref{sec:manifold:EP}) controls the supremum of the empirical process  $G(\mU,\mD)$ (defined in~\eqref{eqn:define:Eprocess}) for $(\mU,\mD)$ in a local neighborhood of $(\bmU,\bmD)$ on the manifold $\sM$. To prepare for establishing the main result, Section~\ref{sec:manifold:intro} reviews a few concepts and existing results on local manifold geometry. 
Section~\ref{sec:manifold:localTangent} develops some novel results on the local geometry by using a new metric on the manifold tangent space that is suitable for studying the convergence properties of penalized spline estimators. 
Section~\ref{sec:manifold:EP} characterizes the complexity of the local neighborhood of $(\bmU,\bmD)$ and the  associated empirical process.

\subsection{Local manifold geometry}\label{sec:manifold:intro}

Recall that the model parameter $\mU$ belongs to the Stiefel manifold  $\mathrm{St}(R, K)$, and $\mD$ belongs to the set $\mathbb{D}_+$ of positive diagonal matrices.  Together, $(\mU, \mD)$ is viewed as a point on the product manifold $\sM = \mathrm{St}(R, K) \times \mathbb{D}_+$. Our convergence analysis is based on the local manifold geometry around the optimal parameter of the working model, $(\bmU,\bmD)\in\sM$ (defined in Section~\ref{sec:meth:spline}), which  is constructed assuming the true covariance function $\mathcal{K}$ is known. 

A general study of geometry of the Stiefel manifold can be found in \cite{edelman1998geometry}, and the geometry of positive definite matrices in \citep{bhatia2009positive} is used for $\mD$. The rest of this subsection summarizes some relevant results on the manifold geometry developed in \cite{paul2009consistency} and \cite{chen2012sparse}.

For the Stiefel manifold $\text{St}(R, K)$, its tangent space $\sT_{\bmU} \text{St}(R, K)$ at $\bmU$ consists of all matrices of the form $\mDelta_{u1} = \bmU \mG_u + \mH_u$, where  $\mG_u\in\bbR^{R\times R}$ is a skew-symmetric matrix and $\mH_u\in\bbR^{K\times R}$ is a matrix orthogonal to $\bmU$, i.e., $\mH_u\trans\bmU = \vzero$.  It is easy to check $\|\mDelta_{u1}\|_F^2 = \| \mG_u\|_F^2 +   \| \mH_u\|_F^2$. 
The tangent space $\sT_{\bmU} \text{St}(R, K)$  can be viewed as a first-order approximation to the Stiefel manifold in the ambient space $\bbR^{K\times R}$.  

Given a tangent vector $\mDelta_{u1}$, the geodesic $\exp_{\bmU}(t, \mDelta_{u1})$ is a curve over the Stiefel manifold for $t\in I$, where $I$ is an interval containing $0$. The velocity of the  geodesic at $t=0$ is $\mDelta_{u1}$, and the velocity has zero acceleration for all $t\in I$.   The geodesic  of the  Stiefel manifold has the explicit expression
\begin{equation} \label{eqn:stiefelExpMap}
	\exp_{\bmU}(t, \mDelta_{u1})  =\begin{pmatrix}
		\bmU & \mQ
	\end{pmatrix} \times \exp\big( t\mS \big) \times \begin{pmatrix}
		\mI_R \\ \vzero
	\end{pmatrix},
	\quad\text{where}\quad
	\mS = \begin{pmatrix}
		\mG_u & -\mR\trans \\
		\mR & \vzero
	\end{pmatrix}.
\end{equation}
In \eqref{eqn:stiefelExpMap}, $\exp\big( t\mS \big)$ is the matrix exponential of $t\mS$, and the two matrices $\mQ$ and $\mR$ are respectively the Q and R factors of the QR decomposition of $\mH_u$. For any orthonormal matrix $\mU$ in a small neighborhood of $\bmU$, we can find $\mDelta_{u1}$ in the tangent space of the Stiefel manifold at $\bmU$ such that
\begin{equation} \label{eqn:stiefelLocal}
	\mU = \exp_{\bmU}(1, \mDelta_{u1}) = \bmU+\mDelta_{u1} + O(\|\mDelta_{u1}\|_F^2).
\end{equation}
The geodesic $\exp_{\bmU}(1, \mDelta_{u1})$ with $t=1$ is called the exponential mapping, which maps a tangent vector $\mDelta_{u1}$ to  $\exp_{\bmU}(1, \mDelta_{u1})$ on the manifold.  On the right hand side of~\eqref{eqn:stiefelLocal}, $\mDelta_{u1}$ can be viewed as the first order approximation of the local difference $\mU - \bmU$.

The tangent space for $\mathbb{D}_+$ at  $\bmD$ consists of all matrices 
$\mDelta = \bmD\mDelta_{d1}$, which are parameterized by a diagonal matrix  $\mDelta_{d1}$  (not necessarily with positive diagonal elements). 
The canonical geometry of manifold $\mathbb{D}_+$ is endowed the intrinsic metric $ \| \mD^{-1} \mDelta\|_F= \|\mDelta_{d1}\|_F$.
Via the geodesic
$\exp_{\bmD}(t,\mDelta)= \bmD \exp( \bmD^{-1}\mDelta\cdot t) $, we have that any $\mD$  in a local neighborhood of $\bmD$ can be expressed as
\begin{equation} \label{eqn:psdLocal}
	\mD = \exp_{\bmD}(1, \bmD\mDelta_{d1}) = \bmD +\bmD\mDelta_{d1} + O(\|\mDelta_{d1}\|_F^2),
\end{equation}
by a unique tangent vector $\mDelta_{d1}$. 

Recall that $\bmW = \bmU\bmD\bmU\trans$, which is the coefficient matrix of the optimal covariance function $\bar{\CCov}(u,v)$ in the working model space. Based on~\eqref{eqn:stiefelLocal} and~\eqref{eqn:psdLocal}, the local structure of a rank-$R$ matrix $\mW$ around $\bmW$ can be investigated. 
In particular, 
plugging ~\eqref{eqn:stiefelLocal}  and~\eqref{eqn:psdLocal} into the difference of $\mW$ and $\bmW$, we get
\begin{equation}\label{eqn:local:lowrank1}
	\begin{aligned}
		\mW- \bmW &= \mU\mD\mU\trans -  \bmU\bmD\bmU\trans = \mDelta_{w1} + \mDelta_{w2}, \\ 
		\mDelta_{w1} &:=\bmU\bmD\mDelta_{u1}\trans +\mDelta_{u1}\bmD\bmU\trans + 
		\bmU\bmD\mDelta_{d1} \bmU\trans, \\
		\mDelta_{w2} &= O\big(\|\mDelta_{u1}\|_F^2 + \|\mDelta_{d1}\|_F^2\big).
	\end{aligned}
\end{equation}
In the above, $\mDelta_{w1}$ is interpreted as the first order approximation of the difference $\mW- \bmW$; and  $\mDelta_{w2}$ is the remaining higher-order discrepancy.

As a direct implication of Lemma~3 and Lemma~4 of \cite{chen2012sparse}, the size of the difference $\| \mW - \bmW \|_F^2$  can be approximately measured  by the size of the tangent vectors $\mDelta_{u1}$ and $\mDelta_{d1}$. Their results are summarized in the lemma below. 

\vspace{-0.5em}
\begin{lemma} \label{lemma:local:frobequiv}
	Let $C_E$ be the lower bound of eigen gaps in Condition~\ref{ass:eigengap}. A  lower bound of $\| \mW - \bmW \|_F^2$ takes the form of
	\begin{equation} \label{eqn:localfrobnorm:upper}
		\| \mW- \bmW \|_F^2 \ge C_E^2 \big(\| \mDelta_{u1}\|_F^2+ \|\mDelta_{d1}\|_F^2\big) + O(\|\mDelta_{w1}\|_F^4).
	\end{equation}
	Further, an upper bound of $\| \mW - \bmW \|_F^2$ holds that
	\begin{equation} \label{eqn:localfrobnorm:lower}
		\| \mW- \bmW \|_F^2 \le 2\lambda_{01}^2 \big( \| \mDelta_{u1}\|_F^2 + \|\mDelta_{d1}\|_F^2\big) + O(\|\mDelta_{w1}\|_F^4),
	\end{equation}
	where $\lambda_{0r}$ is the largest eigenvalue of the true covariance function $\mathcal{K}(u, v)$ in~\eqref{eqn:eigenfunction}.
\end{lemma}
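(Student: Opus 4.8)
The statement is, in the present notation, a restatement of Lemmas~3 and~4 of \cite{chen2012sparse}, so the plan is to reproduce their short argument while tracking how the eigen-gap Condition~\ref{ass:eigengap} enters the constants. The only object that needs to be analysed is the first-order term $\mDelta_{w1}$ from \eqref{eqn:local:lowrank1}, since the remainder $\mDelta_{w2}$ is of higher order. Writing the Stiefel tangent vector as $\mDelta_{u1} = \bmU\mG_u + \mH_u$ with $\mG_u$ skew-symmetric and $\mH_u\trans\bmU = \vzero$, substitution into the definition of $\mDelta_{w1}$ and the identity $\mG_u\trans = -\mG_u$ give
\begin{equation*}
  \mDelta_{w1} = \bmU\big(\mG_u\bmD - \bmD\mG_u + \bmD\mDelta_{d1}\big)\bmU\trans + \bmU\bmD\mH_u\trans + \mH_u\bmD\bmU\trans .
\end{equation*}
First I would check that the three summands are mutually orthogonal in the Frobenius inner product (using $\mH_u\trans\bmU = \vzero$ and $\bmU\trans\bmU = \mI$), that $\|\bmU M\bmU\trans\|_F = \|M\|_F$, and that inside the first summand the off-diagonal matrix $\mG_u\bmD - \bmD\mG_u$ (its $(i,i)$ entry vanishes because $\bmD$ is diagonal) is orthogonal to the diagonal matrix $\bmD\mDelta_{d1}$. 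This yields the clean identity
\begin{equation*}
  \|\mDelta_{w1}\|_F^2 = \|\mG_u\bmD - \bmD\mG_u\|_F^2 + \|\bmD\mDelta_{d1}\|_F^2 + 2\,\|\mH_u\bmD\|_F^2 .
\end{equation*}

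Next I would bound each term from above and below. Entrywise, $\|\mG_u\bmD - \bmD\mG_u\|_F^2 = \sum_{i\ne j}(\mG_u)_{ij}^2(\lambda_{0i} - \lambda_{0j})^2$; Condition~\ref{ass:eigengap} gives $|\lambda_{0i} - \lambda_{0j}| \ge C_E$ for $i\ne j$ with $i,j\le R$, while $0 \le \lambda_{0j} < \lambda_{0i} \le \lambda_{01}$ gives $|\lambda_{0i} - \lambda_{0j}| \le \lambda_{01}$, so $C_E^2\|\mG_u\|_F^2 \le \|\mG_u\bmD - \bmD\mG_u\|_F^2 \le \lambda_{01}^2\|\mG_u\|_F^2$. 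Since $\lambda_{0R} \ge \lambda_{0R} - \lambda_{0,R+1} \ge C_E$ (and $\lambda_{0,R+1}\ge 0$), one likewise gets $C_E^2\|\mDelta_{d1}\|_F^2 \le \|\bmD\mDelta_{d1}\|_F^2 \le \lambda_{01}^2\|\mDelta_{d1}\|_F^2$ and $C_E^2\|\mH_u\|_F^2 \le \|\mH_u\bmD\|_F^2 \le \lambda_{01}^2\|\mH_u\|_F^2$. Combining these with $\|\mDelta_{u1}\|_F^2 = \|\mG_u\|_F^2 + \|\mH_u\|_F^2$ (and the factor $2$ on the last term, which is what produces the constant $2\lambda_{01}^2$ on the right) gives
\begin{equation*}
  C_E^2\big(\|\mDelta_{u1}\|_F^2 + \|\mDelta_{d1}\|_F^2\big) \le \|\mDelta_{w1}\|_F^2 \le 2\lambda_{01}^2\big(\|\mDelta_{u1}\|_F^2 + \|\mDelta_{d1}\|_F^2\big) .
\end{equation*}

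Finally I would pass from $\mDelta_{w1}$ to $\mW - \bmW = \mDelta_{w1} + \mDelta_{w2}$ by expanding $\|\mW-\bmW\|_F^2 = \|\mDelta_{w1}\|_F^2 + 2\langle \mDelta_{w1}, \mDelta_{w2}\rangle + \|\mDelta_{w2}\|_F^2$. Using $\|\mDelta_{w2}\|_F = O(\|\mDelta_{u1}\|_F^2 + \|\mDelta_{d1}\|_F^2)$ from \eqref{eqn:local:lowrank1} together with the lower bound just obtained (so that $\|\mDelta_{u1}\|_F^2 + \|\mDelta_{d1}\|_F^2 \le C_E^{-2}\|\mDelta_{w1}\|_F^2$), both the cross term and $\|\mDelta_{w2}\|_F^2$ are of strictly higher order than $\|\mDelta_{w1}\|_F^2$ and are absorbed into the remainders reported in \eqref{eqn:localfrobnorm:upper} and \eqref{eqn:localfrobnorm:lower}; this last bookkeeping step is precisely the content of Lemmas~3--4 of \cite{chen2012sparse}, to which I would refer for the details. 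There is no substantial obstacle here: the only genuine points of care are the orthogonal decomposition of $\mDelta_{w1}$ and making sure the eigen-gap constant $C_E$ (rather than merely the smallest relevant eigenvalue $\lambda_{0R}$) is what appears in the lower bound, which is exactly where Condition~\ref{ass:eigengap} is used.
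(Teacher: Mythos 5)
Your proposal is correct and takes essentially the same route as the paper, whose ``proof'' is simply the citation of Lemmas~3--4 of \cite{chen2012sparse}: the orthogonal decomposition of $\mDelta_{w1}$ into $\bmU(\mG_u\bmD-\bmD\mG_u+\bmD\mDelta_{d1})\bmU\trans$ plus the two $\mH_u$-terms, together with the eigen-gap bounds $C_E\le|\lambda_{0i}-\lambda_{0j}|$, $\lambda_{0R}\ge C_E$ and $\lambda_{0r}\le\lambda_{01}$, is exactly the computation behind that citation. The only cosmetic caveat is in the last bookkeeping step: the cross term $\langle\mDelta_{w1},\mDelta_{w2}\rangle$ is generically $O(\|\mDelta_{w1}\|_F^3)$ rather than $O(\|\mDelta_{w1}\|_F^4)$, so strictly one obtains the displayed bounds with a third-order remainder (or with slightly perturbed constants), which is all that is ever used downstream since only ``higher order than quadratic'' matters.
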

\vspace{-0.5em}

According to~\eqref{eqn:localfrobnorm:upper} and~\eqref{eqn:localfrobnorm:lower}, the norm $\| \mW- \bmW \|_F$ is locally equivalent to $\| \mDelta_{u1}\|_F + \|\mDelta_{d1}\|_F$. 
The metric $\|\mDelta_{u1}\|_F + \|\mDelta_{d1}\|_F$ induces a sphere-like neighborhood of the optimal parameter $(\bar\mU, \bar\mD)$ on the tangent space of the product manifold 
$\sM = \mathrm{St}(R, K) \times \mathbb{D}_+$.
Though the metric $\|\mDelta_{u1}\|_F + \|\mDelta_{d1}\|_F$ is useful in the setup of \cite{paul2009consistency} and \cite{chen2012sparse},  it does not appropriately characterize the role of the roughness penalty in our penalized spline setting. 
To study the convergence properties of penalized spline estimators, we need another metric that induces an ellipsoid-like local neighborhood on the tangent space. 

\subsection{Local geometry on the tangent space}\label{sec:manifold:localTangent}

Recall $L_2^q(\sU)$ represent the collection of $L_2$ functions on $\sU$ whose $q$-th order derivative is squared integrable. Consider two quadratic functionals $V$ and $J$ defined for $f\in L_2^{q}(\sU)$,
\begin{equation} \label{eqn:twoQudraticFormsGen}
	V(f) := \int_{\sU} f^2(u)\intd u \quad \text{ and } \quad 
	J(f) : = \int_{\sU}\big\{f^{(q)}(u)\big\}^2\intd u .
\end{equation}
Convergence rates of smoothing spline and penalized spline estimators have been given in the form of $V(\hat{f}-f) + \eta J(\hat{f}-f)$ (see, e.g., \cite{gu2013smoothing, huang2021asymptotic}), where $\hat{f}$ is an estimator of $f$ and $\eta$ is the penalty parameter. This suggests that $V(f) +\eta J(f)$ can be used to introduce a new metric for our purpose. We shall present the definition of the new metric in the coefficient space of a suitable basis expansion of spline functions. 

In particular, when $f(\cdot)$ is expressed by a B-spline basis  $\vb(\cdot)$, i.e., $ f(\cdot) =\vbeta\trans \vb(\cdot)$,  we can equivalently express \eqref{eqn:twoQudraticFormsGen} as
\begin{equation} \label{eqn:twoQudraticForms}
	V(f) =  \vbeta\trans\mN \vbeta \quad \text{ and } \quad 
	J(f)  = \vbeta\trans\mGamma\vbeta,
\end{equation}
where $\mN = \int_{\sU}\vb(u)\{\vb(u)\}\trans\intd u$ and $\mGamma = \int_{\sU}\vb^{(q)}(u) \{\vb^{(q)}(u)\}\trans\intd u$ are two square matrices of size $K$. 
The proof of the following simultaneous diagonalization result can be found in Section~S.5 of the Supplementary Material \cite{he2023penalized}.

\begin{lemma} \label{lemma:splinediagonal}
	There exists a basis $\vb(\cdot)$ such that
	$	\mN = \int_{\sU}\vb(u)\vb\trans(u)\intd u = \mI$ is the identity matrix and
	$\mGamma = \int_{\sU}\vb^{(q)}(u)\{\vb^{(q)}(u)\}\trans\intd u$
	is a  diagonal matrix with elements
	\begin{equation*} 
		\gamma_1=\ldots = \gamma_q = 0, \ \text{ and }\  
		\gamma_j \gtrsim j^{2q},\ \text{ for } j=q+1,\ldots, K.
	\end{equation*}
\end{lemma}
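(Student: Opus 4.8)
The plan is to establish the two assertions separately: first, the existence of a basis that simultaneously turns $\mN$ into the identity and $\mGamma$ into a diagonal matrix, and second, the quantitative description of the resulting diagonal entries. The first part is the standard simultaneous-diagonalization construction. Fix any basis $\vb_0(\cdot)$ of the $K$-dimensional degree-$m$ spline space $\mathcal{S}$ on $\sU$ (for instance the standard B-spline basis), and set $\mN_0 = \int_\sU \vb_0\vb_0\trans$ and $\mGamma_0 = \int_\sU \vb_0^{(q)}(\vb_0^{(q)})\trans$; then $\mN_0$ is symmetric positive definite (Gram matrix of linearly independent functions) and $\mGamma_0$ is symmetric positive semidefinite. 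I would diagonalize $\mN_0^{-1/2}\mGamma_0\mN_0^{-1/2} = \mO\mLambda\mO\trans$ with $\mO$ orthogonal and $\mLambda = \diag(\gamma_1,\ldots,\gamma_K)$, $0\le\gamma_1\le\cdots\le\gamma_K$, set $\mT = \mN_0^{-1/2}\mO$, and take $\vb(\cdot) := \mT\trans\vb_0(\cdot)$; since $\mT$ is a fixed matrix, $\int_\sU\vb\vb\trans = \mT\trans\mN_0\mT = \mI$ and $\int_\sU\vb^{(q)}(\vb^{(q)})\trans = \mT\trans\mGamma_0\mT = \mLambda$, as required. What remains is to locate the zero entries and bound the nonzero ones.

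For the zero entries, the multiplicity of $0$ among the $\gamma_j$ equals $\dim\{f\in\mathcal{S}: J(f)=0\}$, and $J(f)=\|f^{(q)}\|_{L_2}^2=0$ forces $f$ to be a polynomial of degree $\le q-1$; since $q\le m$, the space $\Pi_{q-1}$ of such polynomials lies in $\mathcal{S}$ and has dimension $q$, so exactly $q$ of the $\gamma_j$ vanish and, after ordering, $\gamma_1=\cdots=\gamma_q=0<\gamma_{q+1}$. For the lower bound I would use the Courant--Fischer characterization $\gamma_j = \min_{\dim W=j}\max_{0\ne f\in W}R(f)$ with Rayleigh quotient $R(f):=J(f)/V(f)$ and $W$ ranging over subspaces of $\mathcal{S}$, so it suffices to prove the ``Weyl-type'' counting bound
\[
 N(\lambda):=\#\{j:\gamma_j\le\lambda\}\;\le\; q+C\,\lambda^{1/(2q)}\qquad\text{for every }\lambda\ge0,
\]
with $C$ depending only on $q$ (not on $K$ nor on the knot locations). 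Granting this, taking $\lambda=\gamma_j$ gives $j\le q+C\gamma_j^{1/(2q)}$, and for $j\ge q+1$ the elementary bound $j-q\ge j/(q+1)$ yields $\gamma_j\ge\{j/((q+1)C)\}^{2q}\gtrsim j^{2q}$, which is the claim.

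To prove the counting bound, let $S\subseteq\mathcal{S}$ be the span of the eigenfunctions of the $\gamma_j\le\lambda$, so $\dim S = N(\lambda)$ and $J(f)\le\lambda V(f)$ for all $f\in S$. For a parameter $\delta\in(0,1]$, partition $\sU=[0,1]$ into $\ell=\lceil1/\delta\rceil$ equal subintervals, and let $P$ map $f$ to the piecewise polynomial of degree $\le q-1$ whose restriction to each subinterval $I$ is the $L_2(I)$-projection of $f|_I$ onto $\Pi_{q-1}$. By the Bramble--Hilbert (Poincar\'e-type) inequality on each subinterval and summing, $\|f-Pf\|_{L_2}^2\le C\delta^{2q}\|f^{(q)}\|_{L_2}^2 = C\delta^{2q}J(f)\le C\delta^{2q}\lambda\|f\|_{L_2}^2$ for $f\in S$, with $C$ depending only on $q$. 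Choosing $\delta\asymp\lambda^{-1/(2q)}$ so that $C\delta^{2q}\lambda\le1/4$ makes $\|f-Pf\|_{L_2}\le\|f\|_{L_2}/2$ on $S$, hence $P$ is injective on $S$ and $\dim S\le\dim(\mathrm{range}\,P)=\ell q\lesssim\lambda^{1/(2q)}$; when $\delta>1$ (the small-$\lambda$ regime) one uses the single interval $[0,1]$ and gets $\dim S\le q$ instead. Together these give $N(\lambda)\le q+C\lambda^{1/(2q)}$, and crucially the argument uses only $\mathcal{S}\subseteq L_2^q(\sU)$ (ensured by $q\le m$) and never the fine spline structure, so all constants are uniform in $K$.

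I expect the main obstacle to be exactly this counting estimate, and in particular obtaining a constant \emph{independent of $K$}: a naive route through the explicit banded structure of $\mN_0^{-1}\mGamma_0$ would demand delicate $K$-uniform control of inverse B-spline Gram matrices, whereas the Bramble--Hilbert argument above avoids this by working with the functionals $V$ and $J$ directly on the function space. A secondary point worth handling with care is the small-$\lambda$ regime (equivalently the first indices $j$ just above $q$), where $\delta$ exceeds $1$; there the single-interval version gives $N(\lambda)\le q$, which is exactly what is needed since $j^{2q}$ is then bounded by a constant depending only on $q$. A matching upper bound $\gamma_j\lesssim j^{2q}$, which the lemma does not require, would follow analogously from a Bernstein-type inverse inequality for splines on quasi-uniform knots.
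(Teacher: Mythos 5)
Your proposal is correct. The change of basis via $\mT=\mN_0^{-1/2}\mO$ is the standard simultaneous diagonalization; the identification of exactly $q$ zero eigenvalues with the null space $\Pi_{q-1}$ of $J$ on the spline space is right (the only unstated point is that $J(f)=0$ forces a spline to be a \emph{global} polynomial of degree $\le q-1$, which uses the $C^{q-1}$ continuity across knots guaranteed by $q\le m$, a one-line remark); and the Weyl-type counting bound $N(\lambda)\le q+C\lambda^{1/(2q)}$, proved by piecewise $L_2$-projection onto $\Pi_{q-1}$ plus the scaled Bramble--Hilbert/Poincar\'e inequality and an injectivity/dimension-count argument, does deliver $\gamma_j\ge \{(j-q)/C\}^{2q}\gtrsim j^{2q}$ for $j\ge q+1$ with constants depending only on $q$. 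The paper relegates its own proof to Section S.5 of the Supplementary Material, and the route customary in this literature (and reflected in the works the paper builds on, e.g.\ Claeskens--Krivobokova--Opsomer and Xiao) is to work with the B-spline Gram and penalty matrices directly, invoking Demmler--Reinsch-type eigenvalue estimates that require control of these banded matrices under (quasi-)uniform knots. Your argument is genuinely different in that it never touches the fine spline structure: it only uses that the spline space is a $K$-dimensional subspace of $L_2^q(\sU)$, so the lower bound and its constant are automatically uniform in $K$ and in the knot locations, which is exactly the uniformity the lemma needs; the price is that this method cannot give the matching upper bound $\gamma_j\lesssim j^{2q}$, which is where knot regularity would enter, but the lemma does not claim it.
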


Using the basis in Lemma~\ref{lemma:splinediagonal} that simultaneously diagonalizes $\mN$ and $\mGamma$, we have
\[
V(f) + \eta J(f) = \sum_{k=1}^{K} (1+\eta \gamma_k) \beta_k^2 = \|\vbeta\|^2 + \eta \vbeta\trans\mGamma\vbeta,
\]
which can be treated as a squared norm on the coefficient vector $\vbeta = (\beta_1, \ldots, \beta_K)\trans$.
Extending this norm to a general matrix $\mA\in\bbR^{K\times C}$ with arbitrary number $C$ $(\ge 1)$ of columns, we define 
\begin{equation} \label{eqn:penaltynorm}
	\|\mA\|_{\eta}: = \big\{\|\mA\|_{F}^2+  \eta\|\mA\|_{\mGamma}^2\big\}^{1/2}
	= \|(\mI+\eta\mGamma)^{1/2} \mA\|_F,
\end{equation}	
where $\|\mA\|_{\mGamma}^2 := \tr(\mA\trans\mGamma\mA)$. Applying this norm $\|\cdot\|_{\eta}$ to our eigenfunction estimation problem, we have
\begin{equation}\label{eqn:new-norm}
	\|\mU - \bmU\|_{\eta}^2  =
	\sum_{r=1}^{R} \big\{ V(\psi_r - \bar{\psi}_{r})+ \eta J(\psi_r - \bar{\psi}_{r})\big\},
\end{equation}
where $\psi_r(\cdot) = \vb\trans(\cdot) \vu_r$ and $\vu_r$ is the $r$-th column of $\mU$, and similarly for $\bar{\psi}_r$.

The norm $\|\cdot\|_\eta$ can be employed locally as a metric for the tangent space of the Stiefel manifold St$(R,K)$ at $\bar{\mU}$;  and therefore  $\|\mDelta_{u1}\|_{\eta} + \|\mDelta_{d1}\|_F$ as a metric for the product manifold $\sM$. For $\delta >0$, define a neighborhood $\overline{\sN}(\delta)$ of the optimal parameter $(\bmU, \bmD)\in \mathcal{M}$ inside the tangent space 
$\sT_{(\bmU,\bmD)}\sM  = \sT_{\bmU} \text{St}(R, K) \times \sT_{\bmD} \mathbb{D}_{+} $ as
\begin{equation} \label{eqn:tangentDeltaNeighbor}
	\overline{\sN}(\delta) := \big\{(\mDelta_{u1}, \bmD\mDelta_{d1}) \in \sT_{\bmU} \text{St}(R, K) \times \sT_{\bmD} \mathbb{D}_{+}:\; \|\mDelta_{u1}\|_{\eta} + \|\mDelta_{d1}\|_F \le  \delta\big\}.
\end{equation}
For a fixed   $\mDelta_{d1}$
with $r = \|\mDelta_{d1}\|_F < \delta$,  the slice of 
$\overline{\sN}(\delta)$ projected onto the first component $\mDelta_{u1}$ is
\begin{equation} 
	\overline{\sN}(\delta,\mDelta_{d1}) := \{ \mDelta_{u1} \in \sT_{\bmU} \text{St}(R, K):  \|\mDelta_{u1}\|_{\eta}   \le  \delta - r\},
\end{equation} 
which is ellipsoid-like  in the tangent space of the Stiefel manifold St$(R,K)$; see Figure~\ref{fig:stiefel}.

\begin{figure}
	\centering
	\includegraphics[width=0.6\textwidth]{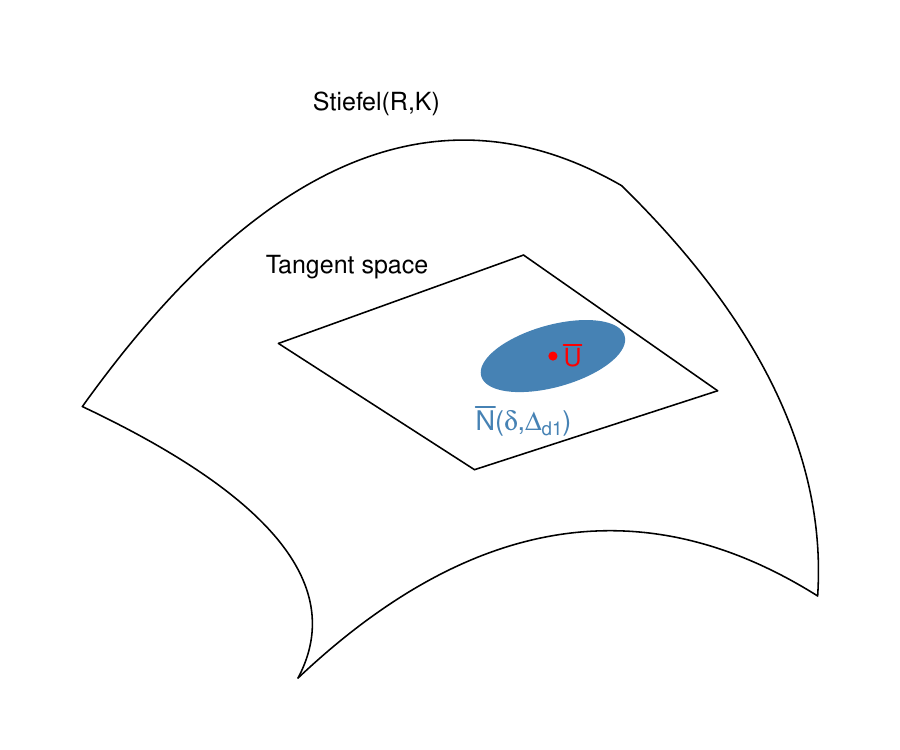}
	\caption{The metric $\|\mDelta_{u1}\|_{\eta} + \|\mDelta_{d1}\|_F$ induces an ellipsoid-like subset (the colored region)  in the tangent space of the manifold St$(R,K)$.\label{fig:stiefel}}
\end{figure}

Compared with the metric $\| \mDelta_{u1}\|_F + \|\mDelta_{d1}\|_F$ used in Lemma~\ref{lemma:local:frobequiv},
the new metric $\|\mDelta_{u1}\|_{\eta} + \|\mDelta_{d1}\|_F$
provides an alternative way to characterize the local perturbation $\mW - \bmW$. 
Define 
\[
\mDelta_{u2} :=  \mU  - \bmU - \mDelta_{u1}= \exp_{\bmU}(1, \mDelta_{u1}) - \bmU - \mDelta_{u1}.
\]
By the triangle inequality,
\begin{equation} \label{eqn:localUnormDiff}
	\|\mDelta_{u1}\|_{\eta} -  \|\mDelta_{u2}\|_{\eta} \le
	\|\mU - \bmU\|_\eta \le \|\mDelta_{u1}\|_{\eta} +  \|\mDelta_{u2}\|_{\eta}.
\end{equation}
Based on the exponential mapping~\eqref{eqn:stiefelExpMap}, we have
\begin{align*}
	\mDelta_{u2} 
	=&\begin{pmatrix}
		\bmU \,& \mQ
	\end{pmatrix} 
	\big[\exp\big(\mS \big) -\mI - \mS\big]
	(\mI_R  ~~ \vzero)\trans
	=\begin{pmatrix}
		\bmU &  \mQ
	\end{pmatrix}
	\mS^2
	\big( \mI/2! + \mS/3!+\ldots\big)
	(\mI_R  ~~ \vzero)\trans.
\end{align*}
Therefore, using the result that $\|\mS\|_F \le 2\|\mDelta_{u1}\|_F \le 2\|\mDelta_{u1}\|_{\eta}$,
\begin{align}
	\|\mDelta_{u2}\|_{\eta}& \le \big\|
	(\mI + \eta\mGamma)^{1/2}
	\begin{pmatrix}
		\bmU\, & \mQ
	\end{pmatrix}  \mS \big\|_{F} \|\mS\|_F \exp(\|\mS\|_F) / 2 \nonumber\\
	& \le \big\|
	(\mI + \eta\mGamma)^{1/2}
	\begin{pmatrix}
		\bmU \,& \mQ
	\end{pmatrix}  \mS \big\|_{F} \|\mDelta_{u1}\|_{\eta} \exp(2\|\mDelta_{u1}\|_\eta) .
	\label{eqn:deltau2:bound}
\end{align}
On the other hand, it holds that
\begin{align}
	\big\| (\mI + \eta\mGamma)^{1/2}
	\begin{pmatrix}
		\bmU \,& \mQ
	\end{pmatrix}  \mS \big\|_{F} &= 
	\big\| (\mI + \eta\mGamma)^{1/2} \begin{pmatrix}
		\mDelta_{u1}\; &\; -\bmU\mR\trans\end{pmatrix}  \big\|_{F}  \nonumber \\
	&\le \|\mDelta_{u1} \|_{\eta} + \|\bmU\|_{\eta} \|\mR\|_F 
	\le  \|\mDelta_{u1} \|_{\eta} (1+  \|\bmU\|_{\eta}). \label{eqn:uqs:bound} 
\end{align}
By Proposition~\ref{prop:approximationerror:eigenfun},
$\|\bmU\|_{\eta}$ is bounded when $\eta^{1/2} R^{1/2} K^{(q-p)_+}$ is bounded.
Combining~\eqref{eqn:deltau2:bound} and~\eqref{eqn:uqs:bound},  we obtain that, 
when $\| \mDelta_{u1}\|_{\eta}$ is sufficiently small,
\begin{equation} \label{eqn:du21}
	\|\mDelta_{u2}\|_{\eta} \lesssim \| \mDelta_{u1}\|_{\eta}^2.
\end{equation}
The above discussion together (\eqref{eqn:localUnormDiff} and \eqref{eqn:du21}) with Lemma~\ref{lemma:local:frobequiv} directly proves the following result.

\vspace{-0.5em}
\begin{lemma} \label{lemma:controlTangent}
	Suppose $\eta^{1/2} R^{1/2} K^{(q-p)_+}$ is bounded. For $(\mDelta_{u1},\bmD\mDelta_{d1})
	\in\overline{\sN}(\delta)$, let  
	$$
	\mU = \exp_{\bmU}(1,\mDelta_{u1}), ~~ \mD = \exp_{\bmD}(1,\bmD\mDelta_{d1}), ~~\text{and}~~ \mW = \mU\mD\mU\trans.
	$$ 
	Then, for $\delta$ small enough, it holds that
	\begin{equation} \label{eqn:newnormequiv}
		\|\mU - \bmU\|_{\eta} \asymp \|\mDelta_{u1}\|_{\eta} ~~\text{and}~~
		\|\mW - \bmW\|_F^2 +\eta \|\mU - \bmU\|_{\mGamma}^2 \asymp  \|\mDelta_{u1}\|_{\eta}^2 + \|\mDelta_{d1}\|_F^2.
	\end{equation}
\end{lemma}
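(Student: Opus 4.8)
\textbf{Proof proposal for Lemma~\ref{lemma:controlTangent}.}
The plan is to prove the two asymptotic equivalences in \eqref{eqn:newnormequiv} separately, relying on the two ingredients already assembled: the $\|\cdot\|_\eta$-estimate \eqref{eqn:du21} for the higher-order term $\mDelta_{u2}$, and the Frobenius-norm equivalence of Lemma~\ref{lemma:local:frobequiv}. The first equivalence $\|\mU-\bmU\|_\eta \asymp \|\mDelta_{u1}\|_\eta$ is essentially immediate: write $\mU-\bmU = \mDelta_{u1}+\mDelta_{u2}$, apply the triangle inequality \eqref{eqn:localUnormDiff}, and then use \eqref{eqn:du21} together with the smallness of $\|\mDelta_{u1}\|_\eta$ (which follows from $(\mDelta_{u1},\bmD\mDelta_{d1})\in\overline{\sN}(\delta)$ and $\delta$ small) to conclude that $\|\mDelta_{u2}\|_\eta \le \tfrac12\|\mDelta_{u1}\|_\eta$, say. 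This sandwiches $\|\mU-\bmU\|_\eta$ between $\tfrac12\|\mDelta_{u1}\|_\eta$ and $\tfrac32\|\mDelta_{u1}\|_\eta$. Here I would note explicitly that the hypothesis $\eta^{1/2}R^{1/2}K^{(q-p)_+}$ bounded is exactly what makes $\|\bmU\|_\eta$ bounded (via Proposition~\ref{prop:approximationerror:eigenfun}(ii)), so that the constant implicit in \eqref{eqn:du21} is a genuine constant, not growing with $N$.

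For the second equivalence, I would decompose $\|\mW-\bmW\|_F^2 + \eta\|\mU-\bmU\|_{\mGamma}^2$ into its two pieces. The term $\eta\|\mU-\bmU\|_{\mGamma}^2$ is, by the definition \eqref{eqn:penaltynorm} of $\|\cdot\|_\eta$, equal to $\|\mU-\bmU\|_\eta^2 - \|\mU-\bmU\|_F^2$, which by the first equivalence is $\asymp \|\mDelta_{u1}\|_\eta^2$ up to the subtraction of a Frobenius term that is itself dominated by $\|\mDelta_{u1}\|_\eta^2$; so the $\eta$-weighted piece contributes a quantity between a constant multiple of $\|\mDelta_{u1}\|_\eta^2$ and $\|\mDelta_{u1}\|_\eta^2$ up to lower-order corrections. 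For the term $\|\mW-\bmW\|_F^2$, Lemma~\ref{lemma:local:frobequiv} gives $C_E^2(\|\mDelta_{u1}\|_F^2 + \|\mDelta_{d1}\|_F^2) + O(\|\mDelta_{w1}\|_F^4) \le \|\mW-\bmW\|_F^2 \le 2\lambda_{01}^2(\|\mDelta_{u1}\|_F^2 + \|\mDelta_{d1}\|_F^2) + O(\|\mDelta_{w1}\|_F^4)$; since $\|\mDelta_{u1}\|_F \le \|\mDelta_{u1}\|_\eta$ and $\|\mDelta_{w1}\|_F \lesssim \|\mDelta_{u1}\|_F + \|\mDelta_{d1}\|_F \le \|\mDelta_{u1}\|_\eta + \|\mDelta_{d1}\|_F \le \delta$, the quartic remainder is of smaller order than the quadratic main term once $\delta$ is small. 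Adding the two pieces, the upper bound on $\|\mW-\bmW\|_F^2 + \eta\|\mU-\bmU\|_{\mGamma}^2$ is $\lesssim \|\mDelta_{u1}\|_\eta^2 + \|\mDelta_{d1}\|_F^2$; for the lower bound, I keep $\eta\|\mU-\bmU\|_{\mGamma}^2 \gtrsim \|\mDelta_{u1}\|_\eta^2 - \|\mDelta_{u1}\|_F^2$ and $\|\mW-\bmW\|_F^2 \gtrsim \|\mDelta_{u1}\|_F^2 + \|\mDelta_{d1}\|_F^2$ (the latter after absorbing the quartic remainder), and add, so that the $-\|\mDelta_{u1}\|_F^2$ cancels against part of the $+\|\mDelta_{u1}\|_F^2$, leaving $\gtrsim \|\mDelta_{u1}\|_\eta^2 + \|\mDelta_{d1}\|_F^2$.

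The one point that requires care — and which I expect to be the main obstacle — is bookkeeping the ``$O$'' and ``$\gtrsim$'' constants so that the quartic remainders $O(\|\mDelta_{w1}\|_F^4)$ in Lemma~\ref{lemma:local:frobequiv} and the quadratic-in-$\|\mDelta_{u1}\|_\eta$ correction from $\mDelta_{u2}$ are genuinely absorbed into the main terms uniformly over $\overline{\sN}(\delta)$. Concretely: $\|\mDelta_{w1}\|_F^4 = (\|\mDelta_{w1}\|_F^2)^2 \lesssim (\|\mDelta_{u1}\|_F^2 + \|\mDelta_{d1}\|_F^2)^2 \le (\|\mDelta_{u1}\|_\eta + \|\mDelta_{d1}\|_F)^2 \cdot (\|\mDelta_{u1}\|_\eta^2 + \|\mDelta_{d1}\|_F^2) \le \delta^2(\|\mDelta_{u1}\|_\eta^2 + \|\mDelta_{d1}\|_F^2)$, so choosing $\delta$ smaller than a fixed threshold depending only on $C_E, \lambda_{01}$, and the absolute constant in \eqref{eqn:du21} makes the remainder at most, say, half the main term, which preserves the two-sided bound with adjusted constants. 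A subtlety worth flagging is that the $\eta$-dependence must not sneak into these thresholds: since $\|\bmU\|_\eta$ is bounded by hypothesis and all other constants ($C_E$, $\lambda_{01}$, the geometric-series bound $\exp(2\|\mDelta_{u1}\|_\eta)\le e$ once $\delta\le 1/2$) are $\eta$-free, the choice of $\delta$ is indeed uniform in $N$, and the stated $\asymp$ relations hold. This completes the proof.
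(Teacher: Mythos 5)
Your proposal is correct and takes essentially the same route as the paper, whose proof consists precisely of assembling the triangle inequality \eqref{eqn:localUnormDiff}, the second-order bound \eqref{eqn:du21} (valid because the hypothesis keeps $\|\bmU\|_{\eta}$ bounded via Proposition~\ref{prop:approximationerror:eigenfun}(ii)), and Lemma~\ref{lemma:local:frobequiv}, with the quartic remainders absorbed for small $\delta$. One small refinement to your lower-bound ``cancellation'': since the constant $C_E^2$ from Lemma~\ref{lemma:local:frobequiv} may be smaller than one, the positive $C_E^2\|\mDelta_{u1}\|_F^2$ does not fully cancel the subtracted $\|\mDelta_{u1}\|_F^2$; instead, use the exact identity $\eta\|\mA\|_{\mGamma}^2=\|\mA\|_{\eta}^2-\|\mA\|_F^2$ (so the $\|\mDelta_{u1}\|_{\eta}^2$ coefficient is essentially one) together with $\|\mDelta_{u1}\|_F\le\|\mDelta_{u1}\|_{\eta}$ to absorb the residual $-(1-C_E^2)\|\mDelta_{u1}\|_F^2$, leaving a net coefficient of order $C_E^2-O(\delta)$ --- a bookkeeping fix, not a gap.
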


\subsection{Supremum of empirical process}\label{sec:manifold:EP}
We  now give an upper bound (in probability) of the supremum of the empirical process  $G(\mU,\mD)$ defined in~\eqref{eqn:define:Eprocess} over a geodesic neighborhood of the optimal parameter $(\bar\mU, \bar\mD)$ on the product manifold $\sM=\mathrm{St}(R, K) \times \mathbb{D}_+$.
Recall that $\overline{\sN}(\delta)$, defined in~\eqref{eqn:tangentDeltaNeighbor}, is a neighborhood of the optimal parameter $(\bmU, \bmD)\in \mathcal{M}$ on  the tangent space 
$\sT_{(\bmU,\bmD)}\sM  = \sT_{\bmU} \text{St}(R, K) \times \sT_{\bmD} \mathbb{D}_{+}$. We map 
$\overline{\sN}(\delta)$ from the tangent space back to the manifold via the exponential mapping to obtain a geodesic neighborhood $\sN(\delta)$ of $(\bmU, \bmD)$ on $\mathcal{M}$,
\begin{equation}\label{eqn:geo_neighborhood}
	\begin{aligned}
		\sN(\delta) : = \exp(1, \overline{\sN}(\delta) )
		=\Big\{(\mU,&\mD): \ \mU = \exp_{\bmU}(1,\mDelta_{u1}),   \\
		&\qquad 
		\mD = \exp_{\bmD}(1,\bmD\mDelta_{d1}),\ (\mDelta_{u1},\bmD\mDelta_{d1})
		\in\overline{\sN}(\delta)\Big\}.
	\end{aligned}
\end{equation}
\begin{proposition}\label{prop:main:esterror}
	Under Conditions~\ref{ass:density} and \ref{ass:moment}, suppose $K^2\ll N$ and $K \delta^2 \ll 1$. Then it holds that
	\begin{align*} 
		\sup_{(\mU, \mD) \in\sN(\delta)} \mathcal{G}(\mU,\mD)  =	O_p\Big(\frac{R^{1/2}\{\eta^{-1/(4q)} \wedge K^{1/2} +1\}}{N^{1/2}} \Big)	\times  \delta + o_p\big(\delta^2\big).
	\end{align*}
\end{proposition}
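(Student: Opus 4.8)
The plan is to pull the supremum over the geodesic neighborhood $\sN(\delta)$ back to the tangent space via the exponential map~\eqref{eqn:geo_neighborhood}, linearize the integrand of $\mathcal{G}$, and split the result into an exactly bilinear main term (handled by a dual-norm computation, with no empirical-process chaining) and nonlinear remainders (absorbed into the $o_p(\delta^2)$ term). Write $\mB_n=(\vb(u_{n1}),\ldots,\vb(u_{nM_n}))\trans$ for the $n$-th design matrix, so that $\mC_n-\bmC_n=\mB_n(\mW-\bmW)\mB_n\trans$. First I would note that $\mathcal{G}(\bmU,\bmD)=0$ and that $\mathcal{G}$ is centered conditionally on the observation times (as $\Expect[\mS_n\mid\{u_{nj}\}]=\mK_n$). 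Taylor-expanding the matrix-monotone $\varphi'$ about $\bmC_n$ gives $\varphi'(\mC_n)-\varphi'(\bmC_n)=D\varphi'(\bmC_n)[\mC_n-\bmC_n]+\mathbf{A}_n$ with $\|\mathbf{A}_n\|\lesssim\|\mC_n-\bmC_n\|^2$, valid because $\bmC_n,\mC_n\succeq\mI$ (recall $\sigma_e^2=1$) and $D\varphi'$ is locally Lipschitz by the perturbation stability of matrix-monotone functions (Lemma~S.3.1). Exploiting the self-adjointness of $D\varphi'(\bmC_n)[\cdot]$, evident from~\eqref{eqn:matrixmonotone:derivative}, I move the operator onto the other slot of the inner product, which turns~\eqref{eqn:define:Eprocess} into
\[
\mathcal{G}(\mU,\mD) = -\big\langle \mW-\bmW,\ \mathbf{Z}\big\rangle - \frac1N\sum_{n=1}^N\frac1{M_n^2}\big\langle \mathbf{A}_n,\ \mS_n-\mK_n\big\rangle,
\]
where $\mathbf{Z}:=\frac1N\sum_{n=1}^N\frac1{M_n^2}\mB_n\trans D\varphi'(\bmC_n)[\mS_n-\mK_n]\mB_n$ is a symmetric $K\times K$ random matrix with $\Expect[\mathbf{Z}\mid\{u_{nj}\}]=0$. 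Finally I split $\mW-\bmW=\mDelta_{w1}+\mDelta_{w2}$ as in~\eqref{eqn:local:lowrank1}, $\mDelta_{w1}$ being linear in the tangent vectors $(\mDelta_{u1},\mDelta_{d1})$.

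The main term is $-\langle\mDelta_{w1},\mathbf{Z}\rangle$. Expanding $\mDelta_{w1}$ and using the symmetry of $\mathbf{Z}$ gives $-\langle\mDelta_{w1},\mathbf{Z}\rangle=-2\langle\mDelta_{u1},\mathbf{Z}\bmU\bmD\rangle-\langle\mDelta_{d1},\bmD\,\diag(\bmU\trans\mathbf{Z}\bmU)\rangle$, and Cauchy--Schwarz against the norms defining $\overline{\sN}(\delta)$ in~\eqref{eqn:tangentDeltaNeighbor}--\eqref{eqn:penaltynorm} --- using $|\langle\mDelta_{u1},\mathbf{M}\rangle|\le\|\mDelta_{u1}\|_\eta\,\|(\mI+\eta\mGamma)^{-1/2}\mathbf{M}\|_F$ --- bounds it by $\delta$ times the random quantity $\Lambda:=2\|(\mI+\eta\mGamma)^{-1/2}\mathbf{Z}\bmU\bmD\|_F+\|\bmU\trans\mathbf{Z}\bmU\bmD\|_F$. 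Since this is exactly linear in $(\mDelta_{u1},\mDelta_{d1})$, passing to the supremum over $\overline{\sN}(\delta)$ costs only the $O(\|\mDelta_{u1}\|^2)$ distortion between tangent vectors and manifold points, controlled by Lemma~\ref{lemma:controlTangent}; no chaining is needed. It then remains to bound $\Lambda$ in mean square. Since $\mathbf{Z}$ averages $N$ independent centered summands, $\Expect\|(\mI+\eta\mGamma)^{-1/2}\mathbf{Z}\bmU\bmD\|_F^2=N^{-2}\sum_n\Expect\|(\mI+\eta\mGamma)^{-1/2}\mathbf{Z}_n\bmU\bmD\|_F^2$; bounding the operator norm of $D\varphi'(\bmC_n)$ uniformly (since $\bmC_n\succeq\mI$), using $\|\mB_n\bmU\|^2\lesssim R$ (Proposition~\ref{prop:approximationerror:covfun}(ii)) and the fourth-moment Condition~\ref{ass:moment} --- which yields $\Expect[\|\mS_n-\mK_n\|^2\mid\vu_n]\lesssim1$ --- reduces the key quantity to $\tfrac RN\Expect_{\vu_n}\tr\big((\mI+\eta\mGamma)^{-1}\mB_n\trans\mB_n\big)$. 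Condition~\ref{ass:density}(ii) and $\int\vb\vb\trans=\mI$ give $\Expect_{\vu_n}\tr((\mI+\eta\mGamma)^{-1}\mB_n\trans\mB_n)\lesssim\tr((\mI+\eta\mGamma)^{-1})$, and Lemma~\ref{lemma:splinediagonal} ($\gamma_j\gtrsim j^{2q}$) gives $\tr((\mI+\eta\mGamma)^{-1})=\sum_k(1+\eta\gamma_k)^{-1}\asymp K\wedge\eta^{-1/(2q)}$. The analogous coarser estimate gives $\Expect\|\bmU\trans\mathbf{Z}\bmU\bmD\|_F^2\lesssim R/N$, so by Markov $\Lambda=O_p\big(R^{1/2}\{(\eta^{-1/(4q)}\wedge K^{1/2})+1\}/N^{1/2}\big)$, which produces the stated leading term.

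It remains to show the two remainders are $o_p(\delta^2)$ uniformly over $\sN(\delta)$. For $-\langle\mDelta_{w2},\mathbf{Z}\rangle$: Lemma~\ref{lemma:controlTangent} and~\eqref{eqn:local:lowrank1} give $\|\mDelta_{w2}\|_F\lesssim\|\mDelta_{u1}\|_F^2+\|\mDelta_{d1}\|_F^2\le\delta^2$, while a crude bound (not exploiting $(\mI+\eta\mGamma)^{-1}$) gives $\Expect\|\mathbf{Z}\|_F^2\lesssim K^2/N$, whence $\|\mathbf{Z}\|_F=O_p(K/N^{1/2})=o_p(1)$ under $K^2\ll N$ and $|\langle\mDelta_{w2},\mathbf{Z}\rangle|=o_p(\delta^2)$. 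For $\frac1N\sum_n M_n^{-2}\langle\mathbf{A}_n,\mS_n-\mK_n\rangle$: a Bernstein--Markov (spline inverse) inequality applied to $V(\psi_r-\bar\psi_r)+\eta J(\psi_r-\bar\psi_r)\le\delta^2$, together with $\|\bar\psi_r\|_\infty$ bounded, $M_n$ bounded, and $K\delta^2\ll1$, gives $\|\mC_n-\bmC_n\|\lesssim K^{1/2}\delta$ and hence $\|\mathbf{A}_n\|\lesssim K\delta^2$; combining the pointwise conditional variance $\lesssim(K\delta^2)^2/N$ (from centering and Condition~\ref{ass:moment}) with a covering/peeling argument over the $O(KR)$-dimensional local neighborhood --- using Lipschitz estimates for $(\mU,\mD)\mapsto\mathbf{A}_n$ --- renders this term $o_p(\delta^2)$ under $K^2\ll N$ and $K\delta^2\ll1$. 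Summing the three contributions proves the proposition.

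The hard part will be this last step: the loss is non-convex and the index set is a manifold neighborhood, so the quadratic Taylor remainder $\mathbf{A}_n$ must be controlled uniformly over $\sN(\delta)$ with only fourth moments available, which forces the covering/peeling argument and is exactly where the conditions $K^2\ll N$ and $K\delta^2\ll1$ are used; by contrast, the bilinear main term, being linear in the tangent vectors, reduces to a clean dual-norm identity whose only delicate point is extracting the sharp factor $K\wedge\eta^{-1/(2q)}$ from $\tr((\mI+\eta\mGamma)^{-1})$ via the spectral bound of Lemma~\ref{lemma:splinediagonal}.
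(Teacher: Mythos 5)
Your decomposition is essentially the one the paper uses: the paper applies the exact resolvent identity to $\varphi'(\mC_n)-\varphi'(\bmC_n)$, producing the linear terms $\beta(\mC_n-\bmC_n)$ and $\mY_n(\xi)=(\xi\mI+\bmC_n)^{-1}(\mC_n-\bmC_n)(\xi\mI+\bmC_n)^{-1}$ (which together are exactly your $D\varphi'(\bmC_n)[\mC_n-\bmC_n]$) plus the second-order term $\mZ_n(\xi)$ (which is exactly your remainder $\mA_n$, with sign), and then controls the three pieces in Lemmas~S.9.1--S.9.3 of the supplement. Your treatment of the linear piece --- moving the self-adjoint operator onto $\mS_n-\mK_n$, splitting $\mW-\bmW=\mDelta_{w1}+\mDelta_{w2}$, the dual-norm Cauchy--Schwarz against $\|\cdot\|_\eta$, and the variance computation giving $\tr\{(\mI+\eta\mGamma)^{-1}\}\asymp K\wedge\eta^{-1/(2q)}+1$ via Lemma~\ref{lemma:splinediagonal} --- is the right mechanism for the stated rate, and your observation that linearity in the tangent vectors makes the supremum free (no chaining) is sound, as is the crude bound $\|\mZ\|_F=O_p(K/N^{1/2})=o_p(1)$ for the $\mDelta_{w2}$ cross term.

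The genuine gap is the uniform control of the second-order term $N^{-1}\sum_n M_n^{-2}\langle\mA_n,\mS_n-\mK_n\rangle$. Pointwise your bound is fine, but the non-centered crude bound is only $O_p(K\delta^2)$, so you must exploit the conditional centering \emph{uniformly} over $\sN(\delta)$ --- and the covering/peeling argument you invoke cannot deliver this under the paper's assumptions. The index set is a $\Theta(KR)$-dimensional manifold neighborhood, so any $\epsilon$-net has cardinality exponential in $KR$; with only the fourth-moment Condition~\ref{ass:moment} (the paper deliberately avoids sub-Gaussian tails), the pointwise deviation bounds are Chebyshev/polynomial-type, and a union bound over such a net, or chaining with polynomially bounded increments, cannot be paid for by the available margin $K/N^{1/2}\to 0$; one would effectively need moments of order $\sim KR$, i.e.\ exponential moments. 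So ``covering/peeling plus Lipschitz estimates'' does not yield $o_p(\delta^2)$ under $K^2\ll N$ and $K\delta^2\ll 1$ as claimed. A workable route must use structure rather than entropy: for instance, expand $(\xi\mI+\mC_n)^{-1}$ once more around $(\xi\mI+\bmC_n)^{-1}$ so that the quadratic part has parameter-free weights and its supremum over the ellipsoid reduces, after vectorization, to a norm bound on a single random matrix not depending on $(\mU,\mD)$ (the same trick you used for the linear term), and control the leftover cubic piece with the deterministic sup-norm bound $\|\mC_n-\bmC_n\|\lesssim K^{1/2}\delta$ combined with moment and empirical-norm arguments that exploit the rank-$R$ structure, in the spirit of Lemma~\ref{lemma:quadraticbound}. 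This uniformity step is precisely the content the paper delegates to Lemmas~S.9.1--S.9.3, and as written your proposal does not close it.
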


\begin{proof}
	We first use the general expression for matrix monotone function~\eqref{eqn:monotonephi} to obtain
	\[
	\varphi'(\mC_n)-\varphi'(\bmC_n)
	=\beta \big(\mC_n - \bmC_n\big) +
	\int_{0}^{\infty} \Big\{
	(\xi \mI + \bmC_n)^{-1} - (\xi \mI + \mC_n)^{-1}   \Big\} \intd\mu(\xi).
	\]
	Denote $\mX_n = \mC_n - \bmC_n$ and note the identity
	\begin{align*}
		& (\xi \mI + \bmC_n)^{-1} - (\xi \mI + \mC_n)^{-1}   \\
		& \quad = (\xi \mI + \bmC_n)^{-1} (\mC_n- \bmC_n) (\xi \mI + \bmC_n)^{-1}                                                 
		- (\xi \mI + \bmC_n)^{-1} (\mC_n- \bmC_n) (\xi \mI + \mC_n)^{-1}  (\mC_n- \bmC_n) (\xi \mI + \bmC_n)^{-1} \\
		& \quad =: \mY_{n}(\xi) - \mZ_n(\xi).
	\end{align*}
	Then the empirical process $\mathcal{G}(\mU,\mD)$    can be decomposed as
	\begin{equation}\label{eqn:esterror:part0}
		\begin{aligned}
			\mathcal{G}(\mU,\mD)  & = \frac{\beta}{N}\sum_{n=1}^{N} \frac{1}{M_n^2} \langle\mX_n , \mS_n- \mK_n\rangle
			+  \frac{1}{N} \sum_{n=1}^{N} \frac{1}{M_n^2} \int \langle\mY_n(\xi) , \mS_n- \mK_n\rangle \intd \mu(\xi) \\ 
			& \qquad\qquad-
			\frac{1}{N} \sum_{n=1}^{N} \frac{1}{M_n^2}\int \langle\mZ_n(\xi) , \mS_n- \mK_n\rangle \intd \mu(\xi). 
		\end{aligned}
	\end{equation}
 Proposition~\ref{prop:main:esterror}  follows by controlling the three terms on the right hand side above; details are given in Lemmas~S.9.1--S.9.3, Section S.9 of the Supplementary Material \cite{he2023penalized}. 
\end{proof}

\section{Behavior of the risk function} 
\label{sec:expectedloss}
This section studies the behavior of the penalized risk function in a small neighborhood of the optimal parameter, where the neighborhood is determined using the metric defined in Section~\ref{sec:manifold:localTangent}.

\subsection{Risk function} 
We first study the behavior of the risk function without considering the penalty function. Using \eqref{eqn:divergenceloss:population}, we can write the difference of the risk function at the parameter $(\mU, \mD)$ and at the optimal parameter $(\bmU, \bmD)$ as
\begin{equation}\label{eqn:risk:difference}
	\begin{split}
		\mathcal{L}_\infty(\mU,\mD) - \mathcal{L}_\infty(\bmU,\bmD) 
		& 
		= \frac{1}{N}\sum_{n=1}^N  \frac{1}{M_n^2}
		D_{\varphi}(\bmC_n || \mC_n)  
		- \frac{1}{N}\sum_{n=1}^N  \frac{1}{M_n^2} \langle \varphi'(\mC_n)-\varphi'(\bmC_n),  \mK_n-\bmC_n\rangle.
	\end{split}
\end{equation}
where $\mC_n$ and $\bar\mC_n$ are defined as in \eqref{eqn:modelCovMat}
and \eqref{eqn:oracleCovMat}, respectively.

Recall that $\mW$ and $\bar{\mW}$ are respectively the coefficient matrices of the tensor product spline basis expansion of the covariance function $\mathcal{C}(u,v)$ and $\overline{\mathcal{C} }(u,v)$. Define the squared empirical norm of the difference $\mW - \bmW$ as
\begin{align}
	\|\mW - \bmW\|_N^2: = \frac{1}{N} \sum_{n=1}^{N}  \frac{1}{M_n^2} \| \mB_{n}(\mW - \bmW) \mB_{n}\trans \|_F^2, \label{eqn:epnorm:define} 
\end{align}
where $\mB_n\trans = (\vb(u_{n1}), \ldots, \vb(u_{nM_n}))$. The following result shows that the first term in~\eqref{eqn:risk:difference} is locally strongly convex with respect to the empirical norm. This result is a direct consequence of Lemma~\ref{lemma:basicconvexity}, and its proof is in Section~S.10.1 of the Supplementary Material \cite{he2023penalized}.
\begin{lemma} \label{lemma:convexLower}
	For $(\mU,\mD)$ in a sufficiently small neighborhood of $(\bmU,\bmD)$, it holds that
	\begin{align}
		\frac{1}{N}\sum_{n=1}^N \frac{1}{M_n^2} \mathcal{D}_{\varphi}(\bmC_n || \mC_n)  
		& \ge  (1 /\overline{M}^2) \Big\{\frac{\beta}{2}+
		\frac{C_\mu}{20(C_u+1)} \Big\} \|\mW - \bmW\|_N^2.
	\end{align}
\end{lemma}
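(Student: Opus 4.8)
\textbf{Proof plan for Lemma~\ref{lemma:convexLower}.}
The plan is to apply the local strong convexity result of Lemma~\ref{lemma:basicconvexity} term by term to each summand $\mathcal{D}_{\varphi}(\bmC_n || \mC_n)$ and then translate the resulting Frobenius-type lower bound into the empirical norm $\|\mW-\bmW\|_N^2$. First I would verify that the hypothesis of Lemma~\ref{lemma:basicconvexity} is met in a small neighborhood of $(\bmU,\bmD)$: since $\mC_n = \bmC_n + \mB_n(\mW-\bmW)\mB_n\trans$ and $\bmC_n = [\bar{\CCov}(u_{nj},u_{nj'})]_{jj'}+\sigma_e^2\mI \succeq \sigma_e^2\mI$ has eigenvalues bounded below by a fixed constant, the quantity $\|\mK_n^{-1/2}(\mC_n-\bmC_n)\mK_n^{-1/2}\|$ (here with $\bmC_n$ playing the role of the reference matrix $\mK_n$ in the lemma) is controlled by $\|\mW-\bmW\|$ times a fixed constant, using Condition~\ref{ass:density}(i) (so $M_n\le\overline{M}$) and the uniform boundedness of $\vb$ and of $\bar{\CCov}$ from~\eqref{eqn:uniformupper:part2}; hence it is $<1/2$ once the neighborhood is small enough. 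Applying Lemma~\ref{lemma:basicconvexity} with reference matrix $\bmC_n$ then gives
\[
\mathcal{D}_{\varphi}(\bmC_n || \mC_n) \ge \frac{\beta}{2}\|\mC_n-\bmC_n\|_F^2 + \frac{C_\mu}{5}\|\bmC_n^{-1/2}(\mC_n-\bmC_n)\bmC_n^{-1/2}\|_F^2.
\]

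Next I would lower-bound the second term. Writing $\mA_n := \mC_n-\bmC_n = \mB_n(\mW-\bmW)\mB_n\trans$, one has $\|\bmC_n^{-1/2}\mA_n\bmC_n^{-1/2}\|_F^2 \ge \|\bmC_n^{-1}\|^{-2}\|\bmC_n^{-1}\|^{2}\cdots$ — more carefully, $\|\bmC_n^{-1/2}\mA_n\bmC_n^{-1/2}\|_F \ge \lambda_{\min}(\bmC_n)\,\|\mA_n\|_F$ is false in general, so instead I use $\|\bmC_n^{-1/2}\mA_n\bmC_n^{-1/2}\|_F \ge \|\bmC_n\|^{-1}\|\mA_n\|_F$, i.e. the Frobenius norm shrinks by at most the operator norm of $\bmC_n^{-1}$. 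Since $\bmC_n \preceq (\overline{M}\,C_u^{1/2} + \sigma_e^2)\mI$ by~\eqref{eqn:uniformupper:part2} and Condition~\ref{ass:density}(i), we have $\|\bmC_n\| \le C_u' $ for a fixed constant $C_u'$ depending only on $C_u,\overline{M},\sigma_e^2$; combining with the trivial bound $\beta\|\mA_n\|_F^2/2 \ge \beta\|\mA_n\|_F^2/(2 C_u'^2)\cdot C_u'^2$, I get
\[
\mathcal{D}_{\varphi}(\bmC_n || \mC_n) \ge \Big\{\frac{\beta}{2} + \frac{C_\mu}{5\,C_u'^2}\Big\}\|\mA_n\|_F^2 = \Big\{\frac{\beta}{2} + \frac{C_\mu}{5\,C_u'^2}\Big\}\|\mB_n(\mW-\bmW)\mB_n\trans\|_F^2.
\]
Dividing by $M_n^2 \le \overline{M}^2$, summing over $n$, dividing by $N$, and recalling the definition~\eqref{eqn:epnorm:define} of $\|\mW-\bmW\|_N^2$, I obtain the claimed inequality, possibly with a slightly different constant in the denominator; matching the stated constant $20(C_u+1)$ just amounts to bookkeeping the bound $C_u' \le 2(C_u+1)$ or similar, using $\sigma_e^2=1$ as fixed in the paper after the statement of Theorem~\ref{thm:main}, and I would simply choose constants to land on a clean form.

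The main obstacle I anticipate is the careful verification that the perturbation condition $\|\bmC_n^{-1/2}(\mC_n-\bmC_n)\bmC_n^{-1/2}\| < 1/2$ holds uniformly in $n$ for $(\mU,\mD)$ in a fixed (non-shrinking-with-$N$) neighborhood — or, if a shrinking neighborhood is acceptable, ensuring that the shrinkage rate does not interfere with the later use of this lemma in Section~\ref{sec:theo:mainResult}. This requires controlling $\|\mB_n(\mW-\bmW)\mB_n\trans\|$ in operator norm by $\|\mW-\bmW\|$ times $\|\mB_n\mB_n\trans\|$, and the latter is at most $\sum_j \|\vb(u_{nj})\|^2 \le \overline{M}\sup_u\|\vb(u)\|^2$; here I must be careful that $\sup_u\|\vb(u)\|^2$ for the diagonalizing basis of Lemma~\ref{lemma:splinediagonal} is $O(K)$ rather than $O(1)$, so the neighborhood radius in $\|\cdot\|$ must be taken $O(1/K)$ — but since the lemma only claims validity in "a sufficiently small neighborhood" and the later application controls $\|\mW-\bmW\|$ at a rate faster than any fixed power needs, this is harmless; I would state this explicitly. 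Everything else is a routine chain of matrix-norm inequalities and the elementary fact that $\mathcal{D}_\varphi\ge 0$ termwise.
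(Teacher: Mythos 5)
Your core chain is exactly the paper's route: Lemma~\ref{lemma:basicconvexity} applied with $\bmC_n$ in place of $\mK_n$ (legitimate, since $\bmC_n\succeq\sigma_e^2\mI$ and the lemma is a generic statement about the Bregman divergence of two positive definite matrices), followed by $\|\bmC_n^{-1/2}\mA_n\bmC_n^{-1/2}\|_F\ge\|\bmC_n\|^{-1}\|\mA_n\|_F$ with $\|\bmC_n\|\le M_nC_u^{1/2}+\sigma_e^2$ from \eqref{eqn:uniformupper:part2}, then dividing by $M_n^2\le\overline{M}^2$ and summing to recognize $\|\mW-\bmW\|_N^2$. The constant bookkeeping is fine (your route in fact yields a constant at least as large as the stated one, since the paper's bound puts the harmless extra factor $1/\overline{M}^2$ in front of the whole bracket), so this part is essentially the paper's proof.

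The genuine problem is your verification of the hypothesis $\|\bmC_n^{-1/2}(\mC_n-\bmC_n)\bmC_n^{-1/2}\|<1/2$. You bound $\|\mC_n-\bmC_n\|\le\overline{M}\sup_u\|\vb(u)\|^2\,\|\mW-\bmW\|\lesssim K\|\mW-\bmW\|$, conclude that the neighborhood must have operator-norm radius $O(1/K)$, and then assert this is harmless because the later application controls $\|\mW-\bmW\|$ fast enough. That last assertion is false under the paper's conditions: in the proof of Theorem~\ref{thm:main} the lemma is invoked on $\sN(a\tau_N)$, where $\tau_N$ can be of order $K^{1/2}N^{-1/2}$, so $K\|\mW-\bmW\|$ can be of order $K^{3/2}N^{-1/2}$, which need not vanish under $K^2\log(K)/N\to0$ (take $N\asymp K^{5/2}$). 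The condition the rest of the paper actually supplies is only $K\delta^2\to0$, i.e.\ $\delta=o(K^{-1/2})$, not $\delta=o(K^{-1})$; so with your crude bound the lemma would not cover the neighborhood on which it is later used. The fix is to exploit the structure of the perturbation rather than a raw operator-norm bound: on $\sN(\delta)$ one has, to first order, $\mW-\bmW=\bmU\bmD\mDelta_{u1}\trans+\mDelta_{u1}\bmD\bmU\trans+\bmU\bmD\mDelta_{d1}\bmU\trans+\cdots$, so $\CCov(u,v)-\bar{\CCov}(u,v)$ is built from the uniformly bounded $\bar{\psi}_r$ (bounded by $C_u^{1/2}$ via \eqref{eqn:uniformupper:part2}) and spline functions whose $L_2$ norms are at most of order $\delta$; since a spline with unit $L_2$ norm has sup-norm $O(K^{1/2})$, this gives $\sup_{u,v}|\CCov(u,v)-\bar{\CCov}(u,v)|\lesssim K^{1/2}\delta$, hence $\|\mC_n-\bmC_n\|\le\|\mC_n-\bmC_n\|_F\le\overline{M}\,K^{1/2}\delta\cdot O(1)\to0$ uniformly in $n$ whenever $K\delta^2\to0$. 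With that replacement your argument closes and is compatible with how the lemma is used downstream.
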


The next result bounds the second term in~\eqref{eqn:risk:difference} in terms of the empirical norm and the approximation error of the covariance function due to using the working model.

\begin{lemma} \label{lemma:linearerror1}
	For $(\mU,\mD)$ in a sufficiently small neighborhood of $(\bmU,\bmD)$, it holds that
	\begin{align*}
		&\frac{1}{N}\sum_{n=1}^N  \frac{1}{M_n^2}\big\langle \varphi'(\mC_n)-\varphi' (\bmC_n),  \mK_n-\bmC_n\big\rangle  
		=   O_p\big( \| \mathcal{K}(u,v) - \overline{\mathcal{C} }(u,v)\|_{L_2}\big)  \times \|\mW - \bmW\|_N.
	\end{align*}
\end{lemma}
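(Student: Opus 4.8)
The plan is to bound the inner product $\langle \varphi'(\mC_n) - \varphi'(\bmC_n), \mK_n - \bmC_n \rangle$ for each $n$ by a product of two factors: one controlling the ``size'' of $\varphi'(\mC_n) - \varphi'(\bmC_n)$ in terms of $\mW - \bmW$, and one controlling the size of $\mK_n - \bmC_n$, which is the discretized version of the covariance approximation error $\mathcal{K}(u,v) - \bar{\CCov}(u,v)$. First I would use the Cauchy--Schwarz inequality for the trace inner product to write
\begin{equation*}
	\big|\langle \varphi'(\mC_n) - \varphi'(\bmC_n),\, \mK_n - \bmC_n \rangle\big|
	\le \|\varphi'(\mC_n) - \varphi'(\bmC_n)\|_F \cdot \|\mK_n - \bmC_n\|_F.
\end{equation*}
For the first factor, I would invoke the perturbation stability of matrix monotone functions (Lemma~S.3.1 of the Supplementary Material, referenced in Section~\ref{sec:lossfun:seed}): since $\varphi'$ is matrix monotone and $\mC_n$, $\bmC_n$ are uniformly bounded above and below (the lower bound coming from the $\sigma_e^2 \mI = \mI$ term, the upper bound from~\eqref{eqn:uniformupper} and~\eqref{eqn:uniformupper:part2} when $(\mU,\mD)$ is near $(\bmU,\bmD)$), one gets $\|\varphi'(\mC_n) - \varphi'(\bmC_n)\|_F \lesssim \|\mC_n - \bmC_n\|_F = \|\mB_n(\mW - \bmW)\mB_n\trans\|_F$, with a constant depending only on $C_u$, $\beta$, and $C_\mu$.

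Next I would assemble the sum. Applying Cauchy--Schwarz across $n$ (with the weights $1/M_n^2$, which are bounded since $\underline{M} \le M_n \le \overline{M}$ by Condition~\ref{ass:density}),
\begin{equation*}
	\frac{1}{N}\sum_{n=1}^N \frac{1}{M_n^2}\|\mC_n - \bmC_n\|_F \,\|\mK_n - \bmC_n\|_F
	\le \Big(\frac{1}{N}\sum_{n=1}^N \frac{1}{M_n^2}\|\mC_n - \bmC_n\|_F^2\Big)^{1/2}
	\Big(\frac{1}{N}\sum_{n=1}^N \frac{1}{M_n^2}\|\mK_n - \bmC_n\|_F^2\Big)^{1/2}.
\end{equation*}
The first parenthesized factor is exactly $\|\mW - \bmW\|_N$ by definition~\eqref{eqn:epnorm:define}. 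For the second, note $\mK_n - \bmC_n = [\mathcal{K}(u_{nj},u_{nj'}) - \bar{\CCov}(u_{nj},u_{nj'})]_{jj'}$ since the $\sigma_e^2\mI$ terms cancel; so $\|\mK_n - \bmC_n\|_F^2 = \sum_{j,j'} \{\mathcal{K}(u_{nj},u_{nj'}) - \bar{\CCov}(u_{nj},u_{nj'})\}^2$, a sum of at most $\overline{M}^2$ squared pointwise errors. Taking expectation over the i.i.d. sampling of the $u_{nj}$'s according to the density $G$ (Condition~\ref{ass:density}(ii), with $G \le C_g$), each such term has mean $\lesssim \|\mathcal{K} - \bar{\CCov}\|_{L_2(\sU^2)}^2$, so $\Expect\big[\frac{1}{N}\sum_n \frac{1}{M_n^2}\|\mK_n - \bmC_n\|_F^2\big] \lesssim \|\mathcal{K}(u,v) - \bar{\CCov}(u,v)\|_{L_2}^2$; a Markov inequality then gives the second factor as $O_p(\|\mathcal{K}(u,v) - \bar{\CCov}(u,v)\|_{L_2})$. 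Combining the two factors yields the claimed bound $O_p\big(\|\mathcal{K}(u,v) - \bar{\CCov}(u,v)\|_{L_2}\big) \times \|\mW - \bmW\|_N$.

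The main obstacle, I expect, is making the perturbation bound $\|\varphi'(\mC_n) - \varphi'(\bmC_n)\|_F \lesssim \|\mC_n - \bmC_n\|_F$ genuinely uniform over $n$ and over $(\mU,\mD)$ in the neighborhood, with a constant independent of $K$ and $N$. This requires a uniform two-sided spectral bound on $\mC_n$ and $\bmC_n$: the lower bound $\lambda_{\min}(\mC_n) \ge \sigma_e^2 = 1$ is automatic, but the upper bound needs $\|\mB_n \mW \mB_n\trans\|$ to stay bounded, which follows from $\sup_{u,v}\CCov^2(u,v) \le C_u$ via~\eqref{eqn:uniformupper:part2} together with $M_n \le \overline{M}$ — but only once we know the neighborhood is small enough that this uniform-boundedness transfers from $(\bmU,\bmD)$ to $(\mU,\mD)$. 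The integral representation~\eqref{eqn:matrixmonotone:derivative} of $D\varphi'$ then gives the Lipschitz estimate, with the integral $\int_0^\infty (\xi+1)^{-2}\intd\mu(\xi) \le C_\mu$ (after using $\lambda_{\min}\ge 1$) providing the finite constant; this is precisely the content of the cited Lemma~S.3.1, so in the write-up it reduces to checking that its hypotheses hold here.
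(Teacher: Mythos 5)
Your overall strategy---termwise Cauchy--Schwarz for the trace inner product, a resolvent-based Lipschitz bound $\|\varphi'(\mC_n)-\varphi'(\bmC_n)\|_F \le (\beta+C_\mu)\|\mC_n-\bmC_n\|_F$, a second Cauchy--Schwarz over $n$ that produces exactly $\|\mW-\bmW\|_N$, and an expectation-plus-Markov bound on the remaining factor---is sound and is essentially the route the paper takes. Two small remarks: the Lipschitz step needs only the lower spectral bound $\mC_n,\bmC_n\succeq \sigma_e^2\mI=\mI$, since then $\|(\xi\mI+\bmC_n)^{-1}(\mC_n-\bmC_n)(\xi\mI+\mC_n)^{-1}\|_F\le (\xi+1)^{-2}\|\mC_n-\bmC_n\|_F$ and $\int_0^\infty(\xi+1)^{-2}\intd\mu(\xi)\le C_\mu$, so the uniform upper bound you worry about is not actually required; and your factorization automatically gives uniformity over the neighborhood of $(\bmU,\bmD)$, because the random factor $\big\{\tfrac{1}{N}\sum_n M_n^{-2}\|\mK_n-\bmC_n\|_F^2\big\}^{1/2}$ does not depend on $(\mU,\mD)$ at all.

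There is, however, one step that fails as written: the claim that every squared entry of $\mK_n-\bmC_n$ has mean $\lesssim\|\mathcal{K}-\bar{\CCov}\|_{L_2(\sU^2)}^2$. This is correct only for the off-diagonal entries $j\neq j'$, where $u_{nj}$ and $u_{nj'}$ are independent and the expectation is a genuine double integral against $G(u)G(v)\le C_g^2$. For the diagonal entries $j=j'$ the expectation is $\int_{\sU}\{\mathcal{K}(u,u)-\bar{\CCov}(u,u)\}^2G(u)\intd u$, an integral over the diagonal only; since the diagonal is a null set of $\sU^2$, this quantity is not controlled by the bivariate $L_2(\sU^2)$ norm of $\mathcal{K}-\bar{\CCov}$. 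You must instead invoke the separate diagonal estimate \eqref{eqn:covnfun:oracleerro2} of Proposition~\ref{prop:approximationerror:covfun}---this is precisely why the paper proves that second bound and explicitly remarks on the distinction between \eqref{eqn:covnfun:oracleerro1} and \eqref{eqn:covnfun:oracleerro2}. With that substitution, the expectation of your second factor becomes $O\big(\|\mathcal{K}-\bar{\CCov}\|_{L_2(\sU^2)}^2+\|\mathcal{K}(v,v)-\bar{\CCov}(v,v)\|_{L_2(\sU)}^2\big)$, and since both terms are $O\big((\delta_K+\omega_R)^2\big)$ the lemma and all its downstream uses are unaffected; but as stated, your argument has a genuine hole at the diagonal terms.
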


\subsection{Empirical norm convergence}
\label{sec:expectedloss:epconv}
This section connects the empirical norm with the metric on the tangent space as defined in Section~\ref{sec:manifold:localTangent}. When $N$ is large enough, we expect that the squared empirical norm $\|\mW - \bmW\|_N^2$ is equivalent to 
$
\| \mathcal{C}(u,v) - \overline{\mathcal{C} }(u,v)\|_{L_2}^2= \|\mW - \bmW\|_F^2 \asymp
\| \mDelta_{u1}\|_F^2+ \|\mDelta_{d1}\|_F^2$
with high probability, 
for any $\mW$ in a neighborhood of $\bmW$ over the product manifold $\sM$ as characterized in~\eqref{eqn:local:lowrank1}. However, establishing such equivalence is intricate. 
Inspecting each summand in~\eqref{eqn:epnorm:define}, we find that
\begin{align} \label{eqn:epnorm:n}
  \| \mB_{n}(\mW - \bmW) \mB_{n}\trans \|_F^2 
	&  =  \sum_{i\neq j} \tr\Big\{
	\mQ_{ni} (\mW - \bmW) \mQ_{nj}(\mW - \bmW)\Big\} + \sum_{i=1}^{M_n} \tr\Big\{\mQ_{ni} (\mW - \bmW)  \mQ_{ni} (\mW - \bmW)\Big\}, 
\end{align}
where $\mQ_{ni} = \vb(u_{ni}) \vb\trans(u_{ni})$.  
Analyzing  the first summation in~\eqref{eqn:epnorm:n} is relatively straight-forward. 
For each summand inside the first summation of~\eqref{eqn:epnorm:n},  the two matrices $\mQ_{ni}$ and $\mQ_{nj}$ are independent as $i\neq j$. 
It is easy to see that the operator norm of $\Expect \big[\mQ_{ni}\otimes \mQ_{nj} \big]$ is bounded by the constant $ C_g^2$  by Condition~\ref{ass:density}.  We need only to control the operator norm of the random matrix
$$
\frac{1}{N} \sum_{n=1}^N  \frac{1}{M_n^2}\sum_{i\neq j} \big\{\mQ_{ni}\otimes \mQ_{nj} - 
\Expect \big[\mQ_{ni}\otimes \mQ_{nj}\big] \big\},
$$
which can be done using a matrix concentration inequality.
On the other hand, controlling the magnitude of the  second summation in~\eqref{eqn:epnorm:n} is more difficult. The same $\mQ_{ni}$ matrix appears twice in each summand, and  the operator norm of $\Expect \big[\mQ_{ni}\otimes \mQ_{ni} \big]$ scales with the spline degree of freedom $K$. We exploit the local manifold structure~\eqref{eqn:local:lowrank1} to alleviate the effect of a diverging $K$ and unbounded $\Expect \big[\mQ_{ni}\otimes \mQ_{ni} \big]$. 
Our result is summarized in the following lemma, and the proof can be found in Section~
S.10.3 of the Supplementary Material \cite{he2023penalized}.

\begin{lemma} \label{lemma:quadraticbound}
	Suppose $K,N\to\infty$,	and $K^2\log(K)/N\to 0$. For any 
	$\mW$  in a small neighborhood of $\bmW$ as specified in~\eqref{eqn:local:lowrank1}.
	Then, it holds that
	\begin{align} \label{lemma:eqn:quadraticlower}
		\|\mW - \bmW\|_N^2  \ge
		\frac{c_g^2C_E^2}{4} \big( \| \mDelta_{u1}\|_F^2+ \|\mDelta_{d1}\|_F^2 \big) + O\big(\|\mDelta_{w1}\|_F^4\big),
	\end{align}
	and
	\begin{equation}\label{lemma:eqn:quadraticupper}
		\|\mW - \bmW\|_N^2 			\le   (3+12C_u/\underline{M}) (C_g+1)^2 C_\lambda^2\big( \| \mDelta_{u1}\|_F^2+ \|\mDelta_{d1}\|_F^2 \big) + O\big(K\|\mDelta_{w1}\|_F^4\big),\\		
	\end{equation}
	with probability tending to one. 
\end{lemma}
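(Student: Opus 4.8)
The plan is to analyze the two sums appearing in the decomposition~\eqref{eqn:epnorm:n} separately, treating the off-diagonal sum by a matrix concentration argument and the diagonal sum by exploiting the low-rank structure~\eqref{eqn:local:lowrank1}. Throughout write $\mM := \mW-\bmW$ and recall $\mM = \mDelta_{w1} + \mDelta_{w2}$ with $\|\mDelta_{w2}\|_F = O(\|\mDelta_{u1}\|_F^2 + \|\mDelta_{d1}\|_F^2)$, so that it suffices to establish the bounds with $\mM$ replaced by $\mDelta_{w1}$ up to the stated higher-order remainders; note $\mDelta_{w1}$ has rank at most $3R$, a fixed number, which is what makes the diagonal term tractable.

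\textbf{Step 1: the off-diagonal sum.} For $i\neq j$ the factors $\mQ_{ni}$ and $\mQ_{nj}$ are conditionally independent given $\vu_n$, and $\Expect[\mQ_{ni}] = \int \vb(u)\vb\trans(u) G(u)\intd u$, whose operator norm is bounded by $C_g$ by Condition~\ref{ass:density}. Hence
$$
\Expect\Big[\frac{1}{N}\sum_n \frac{1}{M_n^2}\sum_{i\neq j}\tr\{\mQ_{ni}\mM\mQ_{nj}\mM\}\Big]
= \frac{1}{N}\sum_n \frac{M_n(M_n-1)}{M_n^2}\,\tr\{\mG_N \mM\mG_N\mM\},
$$
where $\mG_N$ is an average of the $\Expect[\mQ_{ni}]$'s; since $c_g\mI \preceq \mG_N \preceq C_g\mI$ (using $\mN=\mI$ from the diagonalizing basis in Lemma~\ref{lemma:splinediagonal}), this expectation is between $c_g^2(1-1/\underline M)\|\mM\|_F^2$ and $C_g^2\|\mM\|_F^2$ up to the $M_n(M_n-1)/M_n^2$ factors. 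The fluctuation
$$
\frac{1}{N}\sum_n \frac{1}{M_n^2}\sum_{i\neq j}\big\{\mQ_{ni}\otimes\mQ_{nj} - \Expect[\mQ_{ni}\otimes\mQ_{nj}]\big\}
$$
is an average of $N$ independent mean-zero $K^2\times K^2$ matrices, each bounded in operator norm by a constant times $\overline M^2$; a matrix Bernstein inequality (e.g.\ Tropp) gives operator-norm deviation $O_p(\sqrt{K^2\log K/N})=o_p(1)$, which is where $K^2\log K/N\to 0$ enters. Contracting against $\mM$ costs a factor $\|\mM\|_F^2$, so the off-diagonal sum equals its expectation plus $o_p(\|\mM\|_F^2)$.

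\textbf{Step 2: the diagonal sum.} Here each summand is $\tr\{\mQ_{ni}\mM\mQ_{ni}\mM\} = \{\vb\trans(u_{ni})\mM\vb(u_{ni})\}^2 \ge 0$, so the diagonal sum is nonnegative and~\eqref{lemma:eqn:quadraticlower} will follow from Step~1 alone together with Lemma~\ref{lemma:local:frobequiv} (which converts $\|\mM\|_F^2$ to $\|\mDelta_{u1}\|_F^2+\|\mDelta_{d1}\|_F^2$ up to $O(\|\mDelta_{w1}\|_F^4)$); one just needs $c_g^2(1-1/\underline M)C_E^2 \ge c_g^2 C_E^2/4$, which holds since $\underline M>1$. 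For the upper bound~\eqref{lemma:eqn:quadraticupper} we must control the diagonal sum from above. Replace $\mM$ by $\mDelta_{w1}$; writing $\mDelta_{w1} = \sum_{k=1}^{3R} \sigma_k \va_k\vc_k\trans$ via its SVD (so $\sum\sigma_k^2 = \|\mDelta_{w1}\|_F^2$), one has $\{\vb\trans(u)\mDelta_{w1}\vb(u)\}^2 \le 3R\sum_k \sigma_k^2\{\vb\trans(u)\va_k\}^2\{\vb\trans(u)\vc_k\}^2$. Each $\va_k,\vc_k$ is a unit vector in $\bbR^K$ that, because $\mDelta_{w1}$ lies in the span of columns of $\bmU$ and of $\mDelta_{u1}$, corresponds to a spline function; for the $\bmU$-directions the corresponding function is uniformly bounded by $\sqrt{C_u}$ (Proposition~\ref{prop:approximationerror:covfun}(ii)), while for the $\mDelta_{u1}$-direction the relevant quantity $\sup_u\{\vb\trans(u)\mh\}^2$ for a unit vector $\mh\perp\bmU$ can be bounded by $O(K)\|\mh\|_F^2$ using the standard inverse (Bernstein-type) inequality for B-splines on quasi-uniform knots. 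Averaging $\{\vb\trans(u)\va_k\}^2$ against $G$ gives $\int\{\vb\trans(u)\va_k\}^2 G(u)\intd u \le C_g$, so the expectation of the diagonal sum is $O((1+C_u/\underline M)C_g \|\mDelta_{w1}\|_F^2)$, while the fluctuation and the $O(K)$-type crude bound feed into the $O(K\|\mDelta_{w1}\|_F^4)$ remainder after using $\|\mDelta_{w1}\|_F^2 = O(\delta^2)$ small. Collecting the diagonal and off-diagonal contributions and converting via Lemma~\ref{lemma:local:frobequiv} (and $\|\bmW\|_F \le C_\lambda$, the trace-class bound, to identify the constant $C_\lambda^2$) yields~\eqref{lemma:eqn:quadraticupper}.

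\textbf{Main obstacle.} The delicate point is the diagonal sum in Step~2: $\Expect[\mQ_{ni}\otimes\mQ_{ni}]$ has operator norm of order $K$, so a naive concentration bound on $N^{-1}\sum_n M_n^{-2}\sum_i \mQ_{ni}\otimes\mQ_{ni}$ would require $K^3/N\to 0$ or worse and would not give the $\|\mDelta_{w1}\|_F^2$ scaling. The resolution — contracting against the rank-$O(R)$ matrix $\mDelta_{w1}$ before taking norms, and using the B-spline inverse inequality only on the single non-smooth direction while using the uniform bound $C_u$ on the smooth directions — is exactly the ``local manifold structure alleviates the diverging $K$'' mechanism the surrounding text alludes to, and making the bookkeeping of constants ($\underline M$, $C_u$, $C_g$, $C_\lambda$, $C_E$) come out as stated is the part that needs care.
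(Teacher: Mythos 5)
Your overall strategy is the one the paper itself outlines in Section~\ref{sec:expectedloss:epconv} (off-diagonal part via operator-norm concentration of the empirical $K^2\times K^2$ second-moment operator, diagonal part via the low-rank structure of $\mDelta_{w1}$), and your Step~1 together with the nonnegativity of the diagonal sum does essentially deliver the lower bound \eqref{lemma:eqn:quadraticlower}. But there is a genuine gap in your treatment of the diagonal sum for the upper bound \eqref{lemma:eqn:quadraticupper}: the SVD step destroys exactly the structure you need. The singular vectors of $\mDelta_{w1}$ are in general nontrivial mixtures of columns of $\bmU$ and columns of $\mDelta_{u1}$ (already for $R=1$ with $\mDelta_{d1}$ small they are roughly $45^\circ$ rotations of $\bar{\vu}_1$ and the normalized $\mH_u$-direction), so you cannot assign to each $\va_k$, $\vc_k$ either the uniform bound $\sqrt{C_u}$ or the inverse-inequality bound; generically both factors in $\{\vb\trans(u)\va_k\}^2\{\vb\trans(u)\vc_k\}^2$ contain an unbounded-direction component, and the best you get by putting the $O(K)$ sup-norm bound on one factor and averaging the other against $G$ is a diagonal contribution of order $K\|\mDelta_{w1}\|_F^2$. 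That quantity is neither comparable to the stated main term nor absorbable into the remainder: a term of size $K\|\mDelta_{w1}\|_F^2$ cannot be ``fed into $O(K\|\mDelta_{w1}\|_F^4)$'' by taking the neighborhood small, and this lemma does not assume $K\delta^2\to 0$. The repair is to avoid the SVD and use the explicit decomposition in \eqref{eqn:local:lowrank1}, $\mDelta_{w1}=\bmU\bmD\mDelta_{u1}\trans+\mDelta_{u1}\bmD\bmU\trans+\bmU\bmD\mDelta_{d1}\bmU\trans$, in which every rank-one piece has at least one factor that is a column of $\bmU$; then pointwise $|\vb\trans(u)\mDelta_{w1}\vb(u)|\le 2\sqrt{C_u}\sum_{r}\lambda_{0r}|\vb\trans(u)\vdelta_r|+C_u\sum_r\lambda_{0r}|(\mDelta_{d1})_{rr}|$ with $\vdelta_r$ the $r$-th column of $\mDelta_{u1}$ (using Proposition~\ref{prop:approximationerror:covfun}(ii)), and after squaring, the diagonal sum is controlled by the empirical averages of $\vdelta_r\trans\mQ_{ni}\vdelta_r$, for which a $K\times K$ (not $K^2\times K^2$) operator-norm concentration bound around $\Expect[\mQ_{ni}]\preceq C_g\mI$ suffices; this is where the $C_u/\underline{M}$, $C_g$, and $C_\lambda$ factors in the stated constant come from. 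Your closing paragraph describes this mechanism correctly in words, but the inequality chain you wrote does not implement it.

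Two smaller points in Step~1. Each summand $M_n^{-2}\sum_{i\neq j}\mQ_{ni}\otimes\mQ_{nj}$ has operator norm of order $K^2$ (since $\|\vb(u)\|^2\asymp K$), not $O(\overline{M}^2)$; your stated per-term bound is inconsistent with the deviation rate $O_p(\sqrt{K^2\log K/N})$ you quote (a uniformly $O(1)$-bounded average would concentrate at rate $\sqrt{\log K/N}$), and it is precisely the $K^2$ scale of these terms (and of their variance proxy) that makes the hypothesis $K^2\log(K)/N\to0$ necessary; with the correct bookkeeping the matrix Bernstein bound still gives an $o_p(1)$ operator-norm deviation, so the conclusion of Step~1 stands. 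Also, $1-1/\underline{M}\ge 1/4$ does not follow from $\underline{M}>1$ alone; you need the integrality of $M_n$ (so $M_n\ge 2$ and $1-1/M_n\ge 1/2$), and some of that slack must also absorb the $o_p(1)$ fluctuation before you can claim the constant $c_g^2C_E^2/4$ in \eqref{lemma:eqn:quadraticlower}.
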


\subsection{Penalized risk function}
\label{sec:expectedloss:main}
The difference of the penalized risk function at the parameter $(\mU, \mD)$ and at the optimal parameter $(\bmU, \bmD)$ can be written as
\begin{equation}\label{eqn:pen-risk:difference}
	\ell_\infty(\mU,\mD) - \ell_\infty(\bmU,\bmD)
	= \mathcal{L}_\infty(\mU,\mD) - \mathcal{L}_\infty(\bmU,\bmD) 
	+ \mathcal{P}_\eta(\mU) - \mathcal{P}_\eta(\bmU).
\end{equation}
Because $\mathcal{P}_\eta(\mU)$ is quadratic, it holds that
\begin{equation}\label{eqn:pen:diffference}
	\mathcal{P}_\eta(\mU) - \mathcal{P}_\eta(\bmU) = 2\eta \langle\mGamma\bmU, \mU - \bmU\rangle  + \eta \|\mU - \bmU\|_{\mGamma}^2.
\end{equation}
The following result shows that the difference $\ell_\infty(\mU,\mD) - \ell_\infty(\bmU,\bmD) $ can be lower bounded by a quadratic function of  the metric $\|\mDelta_{u1}\|_{\eta} + \|\mDelta_{d1}\|_F$. 

\begin{proposition} \label{prop:objDiff:metric}
	Suppose $K,N\to\infty$,	and $K^2\log(K)/N\to 0$. Then, with probability tending to one,  for $(\mU,\mD)$ in a sufficiently small neighborhood of $(\bmU,\bmD)$, 
	\begin{align*}
		& \ell_\infty(\mU,\mD)  -  \ell_\infty(\bmU,\bmD)  \\
		& \qquad \quad 		\ge
		( C_c/2 - K^{1/2} \omega_R)\big(\|\mDelta_{u1}\|_{\eta}^2 + \|\mDelta_{d1}\|_F^2\big)  -
		Z \big(\|\mDelta_{u1}\|_{\eta} + \|\mDelta_{d1}\|_F\big) 
		-\eta \|\bmU\|_{\mGamma} ,
	\end{align*}
	where  
	$ C_c = \min\big\{1,\;
	\big(\frac{\beta}{2}+	\frac{C_\mu}{20(C_u+1)}  \big) 	\frac{c_g^2C_E^2}{4\overline{M}^2}\big\}( > 0)$ is a constant,
	and $Z$ is a random variable that satisfies
	$Z = O_p( \eta^{1/2} \|\bmU\|_{\mGamma} +  \|\mathcal{K}(u,v) - \overline{\mathcal{C} }(u,v)\|_{L_2}  ). $
\end{proposition}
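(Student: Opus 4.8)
The plan is to combine the decomposition \eqref{eqn:risk:difference} of the (un-penalized) risk difference with the convexity lower bound of Lemma~\ref{lemma:convexLower}, the linear-error bound of Lemma~\ref{lemma:linearerror1}, the empirical-norm equivalence of Lemma~\ref{lemma:quadraticbound}, and finally the algebraic identity \eqref{eqn:pen:diffference} for the quadratic penalty together with the norm-equivalence results of Lemma~\ref{lemma:controlTangent}. First I would handle the un-penalized part. By Lemma~\ref{lemma:convexLower}, the leading term $\tfrac{1}{N}\sum_n M_n^{-2}\mathcal{D}_\varphi(\bmC_n\|\mC_n)$ is bounded below by a constant multiple of $\|\mW-\bmW\|_N^2$; by Lemma~\ref{lemma:quadraticbound}, on an event of probability tending to one this is itself bounded below by $\tfrac{c_g^2C_E^2}{4}(\|\mDelta_{u1}\|_F^2+\|\mDelta_{d1}\|_F^2)$ up to a negligible $O(\|\mDelta_{w1}\|_F^4)$ remainder. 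Combining these two gives the positive coefficient $C_c$ (the minimum over the two branches, the $1$ coming from the $\eta$-penalty contribution handled later). For the second term of \eqref{eqn:risk:difference}, Lemma~\ref{lemma:linearerror1} produces $O_p(\|\mathcal{K}-\bar{\mathcal{C}}\|_{L_2})\times\|\mW-\bmW\|_N$, and by the upper bound in Lemma~\ref{lemma:quadraticbound} the factor $\|\mW-\bmW\|_N$ is $\lesssim \|\mDelta_{u1}\|_F+\|\mDelta_{d1}\|_F \le \|\mDelta_{u1}\|_\eta+\|\mDelta_{d1}\|_F$; this yields a linear-in-metric term with random coefficient of order $\|\mathcal{K}-\bar{\mathcal{C}}\|_{L_2}$, which feeds into $Z$. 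When $\omega_R\ne 0$ (the ``moving truth'' setting of Remark~\ref{remark:moving-truth}), one also keeps the $K^{1/2}\omega_R$ correction, which is why the stated coefficient is $(C_c/2 - K^{1/2}\omega_R)$ rather than simply $C_c$; the extra $K^{1/2}$ enters through the weaker upper bound $O(K\|\mDelta_{w1}\|_F^4)$ in \eqref{lemma:eqn:quadraticupper} when bounding the approximation error $\|\mathcal{K}-\bar{\mathcal{C}}\|_{L_2} = O(\delta_K+\omega_R)$.

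Next I would deal with the penalty difference via \eqref{eqn:pen:diffference}: $\mathcal{P}_\eta(\mU)-\mathcal{P}_\eta(\bmU) = 2\eta\langle\mGamma\bmU,\mU-\bmU\rangle + \eta\|\mU-\bmU\|_\mGamma^2$. The quadratic piece $\eta\|\mU-\bmU\|_\mGamma^2$ is nonnegative and, by Lemma~\ref{lemma:controlTangent}, is $\asymp \eta\|\mDelta_{u1}\|_\mGamma^2$ (up to lower order); together with the Frobenius part from the convexity bound this reconstitutes the full $\|\cdot\|_\eta^2$ norm $\|\mDelta_{u1}\|_\eta^2 + \|\mDelta_{d1}\|_F^2$, hence the constant $1$ inside $C_c$. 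The cross term $2\eta\langle\mGamma\bmU,\mU-\bmU\rangle$ is the delicate one: by Cauchy--Schwarz in the $\mGamma$-inner product it is bounded in absolute value by $2\eta\|\bmU\|_\mGamma\|\mU-\bmU\|_\mGamma$, and writing $\eta^{1/2}\|\mU-\bmU\|_\mGamma \le \|\mU-\bmU\|_\eta \asymp \|\mDelta_{u1}\|_\eta$, this becomes $\lesssim \eta^{1/2}\|\bmU\|_\mGamma \cdot \|\mDelta_{u1}\|_\eta$, which is exactly the $\eta^{1/2}\|\bmU\|_\mGamma$ contribution to $Z$. One must also carry along a crude additive term: separating a small constant fraction of the quadratic (absorbed into the $C_c/2$ slack) via Young's inequality leaves a residual $-\eta\|\bmU\|_\mGamma$ that cannot be made to vanish, matching the $-\eta\|\bmU\|_\mGamma$ in the statement. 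Collecting: the quadratic lower bound has coefficient $C_c/2$ after absorbing slack, minus the $K^{1/2}\omega_R$ correction; the linear term has random coefficient $Z = O_p(\eta^{1/2}\|\bmU\|_\mGamma + \|\mathcal{K}-\bar{\mathcal{C}}\|_{L_2})$; and the irreducible additive piece is $-\eta\|\bmU\|_\mGamma$.

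The main obstacle is the careful bookkeeping of the higher-order remainders $\mDelta_{w2}=O(\|\mDelta_{u1}\|_F^2+\|\mDelta_{d1}\|_F^2)$ and the $O(\|\mDelta_{w1}\|_F^4)$ (resp. $O(K\|\mDelta_{w1}\|_F^4)$) terms appearing in Lemmas~\ref{lemma:local:frobequiv} and~\ref{lemma:quadraticbound}: one must check that on the geodesic neighborhood these are genuinely lower-order relative to the quadratic main term $\|\mDelta_{u1}\|_\eta^2+\|\mDelta_{d1}\|_F^2$, which uses the hypotheses $K^2\log(K)/N\to 0$ and the smallness of the neighborhood (so that $\|\mDelta_{w1}\|_F$ is small and $K\|\mDelta_{w1}\|_F^2 \to 0$). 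A second subtlety is that Lemma~\ref{lemma:convexLower} delivers strong convexity in the \emph{empirical} norm $\|\cdot\|_N$ while the penalty naturally lives in the $\|\cdot\|_\mGamma$/$\|\cdot\|_\eta$ geometry; bridging the two requires invoking Lemma~\ref{lemma:quadraticbound} to pass from $\|\cdot\|_N$ to the Frobenius norm of the tangent vectors and then Lemma~\ref{lemma:controlTangent} to pass from $\|\mU-\bmU\|_\eta$ to $\|\mDelta_{u1}\|_\eta$. Once these equivalences are in hand the final inequality is assembled by straightforward summation of the three contributions, so I would present the proof as: (i) lower-bound the first term of \eqref{eqn:risk:difference}; (ii) upper-bound in absolute value the second term of \eqref{eqn:risk:difference}; (iii) expand the penalty via \eqref{eqn:pen:diffference} and split its cross term with Young's inequality; (iv) combine, apply Lemma~\ref{lemma:controlTangent} to rewrite everything in the $\|\mDelta_{u1}\|_\eta+\|\mDelta_{d1}\|_F$ metric, and absorb remainders.
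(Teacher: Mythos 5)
Your proposal is correct and follows essentially the paper's intended assembly: decompose the penalized risk difference via \eqref{eqn:risk:difference} and \eqref{eqn:pen:diffference}, lower-bound the divergence term by Lemma~\ref{lemma:convexLower} combined with the lower half of Lemma~\ref{lemma:quadraticbound} (giving the explicit branch of $C_c$), bound the linear term by Lemma~\ref{lemma:linearerror1} together with the upper half of Lemma~\ref{lemma:quadraticbound} (whose $O(K\|\mDelta_{w1}\|_F^4)$ remainder, multiplied by $O_p(\omega_R)$, is exactly the source of the $K^{1/2}\omega_R$ correction to the quadratic coefficient), and treat the penalty cross term by Cauchy--Schwarz with $\eta^{1/2}\|\cdot\|_{\mGamma}\le\|\cdot\|_{\eta}$ so that it feeds the $\eta^{1/2}\|\bmU\|_{\mGamma}$ part of $Z$, while the quadratic penalty piece supplies the constant $1$ in $C_c$. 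Two small caveats: your account of the additive $-\eta\|\bmU\|_{\mGamma}$ term is not right as stated --- Young's inequality applied to the cross term would leave $\eta\|\bmU\|_{\mGamma}^2$, not $\eta\|\bmU\|_{\mGamma}$ --- but this is immaterial because your argument already absorbs the whole cross term into $Z$, and subtracting the nonnegative quantity $\eta\|\bmU\|_{\mGamma}$ only weakens the claimed lower bound; and passing from $\|\mU-\bmU\|_{\eta}$ to $\|\mDelta_{u1}\|_{\eta}$ via Lemma~\ref{lemma:controlTangent} silently imports its hypothesis that $\eta^{1/2}R^{1/2}K^{(q-p)_+}$ stays bounded, which is not listed in the proposition but does hold under the conditions of Theorem~\ref{thm:main} where the proposition is used.
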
 

\section{Proof of Theorem~\ref*{thm:main}} \label{sec:theo:mainResult}

Using the tools developed in Sections \ref{sec:manifold} and \ref{sec:expectedloss}, we give the proof of the main  Theorem~\ref{thm:main}.
\begin{proof}
	We allow $\omega_R>0$ below so that the result presented in Remark~\ref{remark:moving-truth} can be obtained using the same argument. Set
	\begin{equation} \label{eqn:defineDeltaN}
		\tau_N =R^{1/2} \{N^{-1/2}(\eta^{-1/(4q)} \wedge K^{1/2}) + \eta^{1/2}K^{(q-p)_+} + K^{-\zeta}\} + \omega_{R}.
	\end{equation}
	Consider a neighborhood of zero in the tangent space of the product manifold $\mathcal{M}$ at $(\bmU,\bmD)$
	\begin{equation} \label{eqn:thm:asypvariance:boundary}
		\overline{\sN}( a\tau_N) = \{(\mDelta_{u1}, \bmD\mDelta_{d1}) \in \sT_{\bmU} \text{St}(R, K) \times \sT_{\bmD} \mathbb{D}_{+}:\; \|\mDelta_{u1}\|_{\eta} + \|\mDelta_{d1}\|_F \le a\tau_N\},
	\end{equation}
	with a constant $a$ to be decided later. Use the exponential mapping to map the set in \eqref{eqn:thm:asypvariance:boundary} to the manifold to obtain a geodesic neighborhood of $(\bmU, \bmD)$, denoted as $\sN(a \tau_N) = \exp\big(1, \overline{\sN}( a\tau_N) \big)$.
	We show that there is a local minimizer in $\sN( a \tau_N)$, denoted as $(\hat{\mU},\hat{\mD})$, for the problem \eqref{eqn:firstOptimization} of the main paper with high probability, as $N \to\infty$. To this end, we only need to show that $\ell(\mU,\mD)>\ell(\bmU,\bmD)$
	for all  boundary points
	\[
	(\mU,\mD)\in \partial \sN(a \tau_N) = \exp\big(1,\partial \overline{\sN}(a \tau_N)\big)
	\]
	with high probability, as $N\to\infty$.
	
	Consider $(\mU,\mD)\in \partial \sN(a \tau_N)$. Write
	$\mU = \exp_{\bmU}(1,\mDelta_{u1})$, $\mD = \exp_{\bmD}(1,\bmD\mDelta_{d1})$, and
	$$(\mDelta_{u1}, \bmD\mDelta_{d1}) \in \sT_{\bmU} \text{St}(R, K) \times \sT_{\bmD} \mathbb{D}_{+}.$$ Then
	$\|\mDelta_{u1}\|_{\eta} + \|\mDelta_{d1}\|_F = a\tau_N$. Below we use $c_1, c_2, \ldots$ to denote appropriate constants.
	Observe that
	\begin{equation}\label{eqn:risk:decomp}
		\ell(\mU,\mD)-\ell(\bmU,\bmD)
		= \ell_\infty (\mU,\mD)-\ell_\infty (\bmU,\bmD) +\mathcal{G}(\mU,\mD).
	\end{equation}
	By~Proposition~\ref{prop:approximationerror:eigenfun}(ii) and \eqref{eqn:penalty} of the main paper, it holds that $\|\bmU\|_{\mGamma}\le c_1 R^{1/2} K^{(q-p)_+}$.
	Thus, Proposition~\ref{prop:objDiff:metric} of the main paper implies that
	\begin{equation*}
		\begin{split}
			\ell_\infty (\mU,\mD)-\ell_\infty (\bmU,\bmD) 
			& \ge c_2 \big(\|\mDelta_{u1}\|_{\eta}^2 + \|\mDelta_{d1}\|_F^2\big)  -
			Z \big(\|\mDelta_{u1}\|_{\eta} + \|\mDelta_{d1}\|_F\big)  -\eta \|\bmU\|_{\mGamma} \\
			& \ge c_2 a^2 \tau_N^2  -
			Z a \tau_N - R^{1/2} \eta K^{(q-p)_+},
		\end{split}
	\end{equation*}
	where $Z$ is a random variable which, by applying Propositions~\ref{prop:approximationerror:eigenfun} and \ref{prop:approximationerror:covfun} of the main paper, satisfies
	\[
	Z = O_P(R^{1/2} \eta^{1/2}K^{(q-p)_+} + R^{1/2}K^{-\zeta} + \omega_{R}) = O_P(\tau_N).
	\]
	Therefore, with high probability, $Z \leq  c_3 \tau_N$, and
	\begin{equation}\label{eqn:risk:decomp1}
		\ell_\infty (\mU,\mD)-\ell_\infty (\bmU,\bmD)
		\ge c_2 a^2 \tau_N^2  -
		c_3 a \tau_N^2    -  \eta^{1/2} \tau_N.
	\end{equation}
	On the other hand, Proposition~\ref{prop:main:esterror} of the main paper implies
	\[
	\mathcal{G}(\mU,\mD) \geq - O_p\Big(\frac{R^{1/2}\{\eta^{-1/(4q)} \wedge K^{1/2} +1\}}{N^{1/2}} \Big)	\times  a \tau_N - o_p\big(a^2 \tau_N^2\big),
	\]
	and thus, with high probability,
	\begin{equation}\label{eqn:risk:decomp2}
		\mathcal{G}(\mU,\mD) \geq - c_4 \tau_N\times a \tau_N - (c_2/2) a^2 \tau_N^2.
	\end{equation}
	Let $a = 2(c_3 + c_4 + 1)/c_2 +1 $ in \eqref{eqn:thm:asypvariance:boundary}.
	Combining \eqref{eqn:risk:decomp}--\eqref{eqn:risk:decomp2}, we obtain that
	\[
	\ell(\mU,\mD)-\ell(\bmU,\bmD) \geq a \tau_N^2 (c_2 a/2 - c_3 - c_4) - \eta^{1/2} \tau_N
	\ge a \tau_N^2 - \eta^{1/2} \tau_N > 0.
	\]
	This completes the proof of existence of the local estimator $(\hat{\mU},\hat{\mD})$ in $\sN( a \tau_N)$.
	
	Since $R$ is fixed, using \eqref{eqn:new-norm} and \eqref{eqn:newnormequiv} of the main paper, the above result implies that
	\begin{equation*}
		\|\hat{\psi}_{r} - \bar{\psi}_{r}\|
		+  \eta^{1/2} J^{1/2}\big( \hat{\psi}_{r} - \bar{\psi}_{r} \big)
		\asymp \| \hat{\mU} - \bmU\|_{\eta}
		\asymp \| \hat{\mDelta}_{u1} \|_{\eta}  \leq a \tau_N.
	\end{equation*}
	It follows from Proposition~\ref{prop:approximationerror:eigenfun}(i) and Proposition~\ref{prop:approximationerror:eigenfun}(ii) of the main paper that
	\begin{equation}\label{eqn:eigenfun:oracleerror}
		\|\bar{\psi}_{r} - \psi_{0r}\| + \eta^{1/2} J^{1/2}(\bar{\psi}_{r}) = O\big( K^{-\zeta}	+ \eta^{1/2}K^{(q-p)_+} \big) = O(\tau_N).
	\end{equation}
	Therefore, by the triangle inequality, it holds that
	\begin{equation*}
		\begin{aligned}
			&
			\|\hat{\psi}_{r} - \psi_{0r} \| + \eta^{{1}/{2}} J^{{1}/{2}}(\hat{\psi}_{r} )  \\
			& \qquad \le
			\|\hat{\psi}_{r} - \bar{\psi}_{r}\|
			+ \eta^{{1}/{2}} J^{{1}/{2}}\big( \hat{\psi}_{r} - \bar{\psi}_{r}
			\big) +
			\|\bar{\psi}_{r} - \psi_{0r} \| + \eta^{{1}/{2}} J^{{1}/{2}}(\bar{\psi}_{r} )  =O_P(\tau_n).
		\end{aligned}
	\end{equation*}
	The desired result follows due to $q\leq m$ and $K^{(q-p)_+} = K^{(q-\zeta)_+}$.
\end{proof}

\section{Consistency of the global estimator} \label{sec:consistency}
	
Theorem~\ref{thm:main} particularly exploits the local manifold geometry discussed in Section~\ref{sec:manifold}. 
This technique leads to the main theoretical results concerning the local minimizer around the optimal parameters of the working model, as detailed in Section~\ref{sec:roadmap}. 
On the other hand, the investigation of the asymptotic behavior of the global minimizer of~\eqref{eqn:firstOptimization} poses distinct challenges. This stems from the fact that the general divergence loss does not necessarily exhibit global convexity, and the parameter space subject to manifold constraints is non-convex inside the ambient space. 

In this section, we attempt to address the challenge and provide a partial answer regarding the asymptotic behavior of the global minimizer. We restrict the parameter space of the working model to be a compact set and establish the consistency of the global minimizer. Specifically, the parameter space of the working model is defined as 
\begin{equation} \label{eqn:sup:compact}
	\begin{split}
		\mathbb{S} = \Big\{(\mU, \mD, \sigma_{e}^2)\in \mathrm{St}(R, K) &\times \mathbb{D}_+ \times \bbR_+:\;
		\tr(\mU\trans \mGamma\mU) \le b_0, \\
		&\lambda_r \in [b_1,b_2] \text{ for } r=1,\ldots, R, \text{ and } \sigma_{e}^2 \in[b_1, b_2] \;
		\Big\},
	\end{split}
\end{equation}
where $b_0$, $b_1$, and $b_2$ are fixed positive values satisfying $b_2>b_1>0$ and such that the optimal parameters $(\bar\mU, \bar\mD, \sigma_{0e}^2)$ are in $\mathbb{S}$. 
According to \eqref{eqn:penalty}, the constraint $\tr(\mU\trans \mGamma\mU) \le b_0$ is placed on eigenfuctions, i.e., $\sum_{r=1}^{ R} \int_{\sU} \big\{ \psi^{(q)}_{r}(u)\big\}^2 \intd u \le b_0$. In application, the upper bound $b_0$ and $b_2$ can be chosen as sufficiently large constants, while $b_1$ can be chosen as a sufficiently small positive number close to zero.

On the compact domain $\mathbb{S}$, we are able to establish the uniform convergence of the estimation error and construct the quadratic bounds for the risk function (i.e., expectation of the loss).  We now consider the following global estimator
\begin{equation} \label{eqn:opt:compact}
	(\widehat\mU, \widehat\mD,\widehat\sigma_e^2) = \argmin_{ (\mU, \mD,\sigma_e^2)\in\mathbb{S}}\, \; \ell(\mU,\mD,\sigma_e^2),
\end{equation}
where $\mathbb{S}$ is defined in~\eqref{eqn:sup:compact}. Denote the $r$-th diagonal element of $\widehat\mD$ as $\widehat{\lambda}_r$, and we let $\widehat{\vu}_{r}$ be the $r$-th column of the global estimator $\widehat{\mU}$ in \eqref{eqn:opt:compact}, $1\leq r \leq R$. Correspondingly, the global estimator of the $r$-th eigenfunction is denoted as $\widehat{\psi}_{r}(u) = \vb\trans(u)\widehat{\vu}_{r}$.
The following theorem presents the consistency of the global estimator. Its proof is provided in Section~
S.12 of the Supplementary Material \cite{he2023penalized}.
	
\begin{theorem} \label{prop:consistency:compact}
	Consider the global minimizer of~\eqref{eqn:opt:compact} and suppose the conditions of Theorem~\ref{thm:main}  hold. Then, for $r=1,2,\ldots, R$, we have that
	$$
		|\widehat{\lambda}_r - \lambda_{0r} | = o_p(1), \qquad
		\|\widehat{\psi}_{r} - \psi_{0r} \|^2   = o_p(1),\qquad \text{and }\qquad |\widehat\sigma_{e}^2 - \sigma_{0e}^2| = o_p(1).
	$$
\end{theorem}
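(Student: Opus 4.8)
The plan is to run the standard two-ingredient argument for consistency of an M-estimator over a compact set — a uniform law of large numbers for the loss over $\mathbb{S}$, together with a quadratic (well-separation) lower bound for the risk over $\mathbb{S}$ — and then to convert an $L_2$-bound on the estimated covariance kernel into the stated bounds on the individual eigenvalues, eigenfunctions, and noise variance by a perturbation argument.

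First I would record two reductions that make $\mathbb{S}$ genuinely tractable. Since $K\to\infty$ and $\eta K^{1+2(q-p)_+}\to 0$ we have $\eta\to 0$, so on $\mathbb{S}$ the penalty satisfies $0\le\mathcal{P}_\eta(\mU)\le\eta b_0=o(1)$; it is therefore asymptotically negligible, and the basic inequality for the global minimizer reduces to $\mathcal{L}(\widehat\mU,\widehat\mD,\widehat\sigma_e^2)\le\mathcal{L}(\bmU,\bmD,\sigma_{0e}^2)+\eta b_0$. Moreover, on $\mathbb{S}$ the constraint $\tr(\mU\trans\mGamma\mU)\le b_0$ together with $\int_\sU\psi_r^2=1$ and $q\ge 1$ confines each $\psi_r$ to a fixed ball of the Sobolev space $L_2^q(\sU)$, so by the Sobolev embedding $\sup_{r\le R}\sup_u\psi_r^2(u)$ is bounded by a constant depending only on $b_0$; hence $\sup_{u,v}|\mathcal{C}(u,v)|$ is bounded and, combined with $\lambda_r,\sigma_e^2\in[b_1,b_2]$, the spectra of all $\mC_n$ (and of $\mK_n$) lie in a fixed compact interval $[b_1,C']\subset(0,\infty)$ uniformly over $\mathbb{S}$ and over $N$. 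On that interval $\varphi$ is $C^2$, so $\varphi,\varphi'$ are Lipschitz and $\varphi''\ge c_\varphi>0$; the latter upgrades Lemma~\ref{lemma:basicconvexity} to a global-on-the-compact-spectral-range strong convexity $\mathcal{D}_\varphi(\mK_n\|\mC_n)\gtrsim\|\mC_n-\mK_n\|_F^2$, valid for every parameter in $\mathbb{S}$.

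Second, I would establish the uniform convergence $\sup_{(\mU,\mD,\sigma_e^2)\in\mathbb{S}}|\mathcal{L}(\mU,\mD,\sigma_e^2)-\mathcal{L}_\infty(\mU,\mD,\sigma_e^2)|=o_p(1)$, together with the companion statement that $\|\mW-\bmW\|_N^2$ is uniformly over $\mathbb{S}$ comparable to $\|\mathcal{C}-\bar{\mathcal{C}}\|_{L_2(\sU^2)}^2+\|\mathcal{C}(v,v)-\bar{\mathcal{C}}(v,v)\|_{L_2(\sU)}^2$ — a global version of Lemma~\ref{lemma:quadraticbound}. Both come from a covering/chaining argument: the Lipschitz bounds above make each loss summand Lipschitz in the parameter with respect to a Frobenius-type metric, the metric entropy of $\mathbb{S}\subset\mathrm{St}(R,K)\times\mathbb{D}_+\times\bbR_+$ scales like $K\log(1/\varepsilon)$, the fourth-moment control of $\mS_n-\mK_n$ is supplied by Condition~\ref{ass:moment}, and the condition $K^2\log K/N\to 0$ is precisely what forces the resulting deviation to be $o_p(1)$. \emph{This is the step I expect to be the main obstacle}, since the ambient dimension $K$ grows; the $K^2$ (rather than $K$) rate is dictated by the tensor-product structure of the covariance, as in the discussion following Theorem~\ref{thm:main}. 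These two uniform statements combine, after inserting $\|\bar{\mathcal{C}}-\mathcal{K}\|_{L_2}=O(\delta_K)=O(K^{-\zeta})$ from Proposition~\ref{prop:approximationerror:covfun} and separating the off-diagonal and diagonal contributions to $\|\mC_n-\mK_n\|_F^2$ (using $\underline M>1$, Condition~\ref{ass:density}, so that off-diagonal entries are present), into a quadratic lower bound for the risk valid over all of $\mathbb{S}$ with probability tending to one: $\mathcal{L}_\infty(\mU,\mD,\sigma_e^2)-\mathrm{Const}\gtrsim\|\mathcal{C}-\mathcal{K}\|_{L_2(\sU^2)}^2+|\sigma_e^2-\sigma_{0e}^2|^2-O(K^{-2\zeta})$.

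Finally I would close the argument. Writing $\theta_0^\ast=(\bmU,\bmD,\sigma_{0e}^2)\in\mathbb{S}$, optimality gives $\mathcal{L}(\widehat\mU,\widehat\mD,\widehat\sigma_e^2)\le\mathcal{L}(\theta_0^\ast)+\eta b_0$; combining this with the uniform convergence and with $\mathcal{L}_\infty(\theta_0^\ast)-\mathrm{Const}=O_p(K^{-2\zeta})=o_p(1)$ (again from Proposition~\ref{prop:approximationerror:covfun} and the bounded spectrum), one gets $\mathcal{L}_\infty(\widehat\mU,\widehat\mD,\widehat\sigma_e^2)-\mathrm{Const}=o_p(1)$, and the quadratic lower bound then yields $\|\widehat{\mathcal{C}}-\mathcal{K}\|_{L_2(\sU^2)}=o_p(1)$ and $|\widehat\sigma_e^2-\sigma_{0e}^2|=o_p(1)$. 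For the eigen-components, note $\widehat{\mathcal{C}}$ and $\mathcal{K}$ are kernels of compact self-adjoint Hilbert--Schmidt operators on $L_2(\sU)$, with $\widehat{\mathcal{C}}$ of rank at most $R$ and $\mathcal{K}$ of rank exactly $R$ with eigenvalues separated by Condition~\ref{ass:eigengap} (so in particular $\lambda_{0R}>0$); Weyl's inequality gives $|\widehat\lambda_r-\lambda_{0r}|\le\|\widehat{\mathcal{C}}-\mathcal{K}\|_{\mathrm{op}}\le\|\widehat{\mathcal{C}}-\mathcal{K}\|_{L_2(\sU^2)}=o_p(1)$, and the Davis--Kahan theorem with the positive eigengap gives $\|\widehat\psi_r-\psi_{0r}\|=o_p(1)$ under the usual sign normalization of eigenfunctions, which is Theorem~\ref{prop:consistency:compact}.
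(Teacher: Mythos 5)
Your plan coincides with the paper's own proof strategy for Theorem~\ref{prop:consistency:compact}: over the compact set $\mathbb{S}$ the penalty is asymptotically negligible and the spectra of the model matrices are uniformly bounded, one establishes uniform convergence of the (empirical loss minus risk) over $\mathbb{S}$ together with quadratic (strong-convexity) lower bounds for the risk, concludes that the average divergence between $\mK_n$ and $\widehat{\mC}_n$ vanishes and hence that $\widehat{\mathcal{C}}$ and $\widehat{\sigma}_e^2$ are consistent, and finally transfers this to the eigenvalues and eigenfunctions via spectral perturbation (Weyl/Davis--Kahan with the eigen-gap of Condition~\ref{ass:eigengap}, up to sign). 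This is essentially the same route as the argument in Section~S.12 of the Supplementary Material, so no further changes are needed.
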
 

The proof is provided in Section~S.12 of the Supplementary Material \cite{he2023penalized}.  We provide a high level summary of the proof here. Over the compact set $\mathbb{S}$,  we establish the uniform convergence of the empirical process $G(\mU,\mD,\sigma_{e}^2)$ (defined as in~\eqref{eqn:define:Eprocess} and including the unknown parameter $\sigma_{e}^2$),  and construct the quadratic upper and lower bounds for the risk function~\eqref{eqn:divergenceloss:population}.  We further establish that $(1/N) \sum_{n=1}^N 	\mathcal{D}_{\varphi}(\mK_n ||\widehat \mC_n) / M_n^2 = o_p(1)$, where $\mK_n$ is defined in~\eqref{eqn:trueCovMat} and $\widehat\mC_n = \big[\widehat{\CCov}(u_{nj}, u_{nj'})\big]_{jj'} + \widehat\sigma_e^2\mI$ with $\widehat{\CCov}(u,v): =  \vb\trans(u) \widehat\mU\widehat\mD\widehat\mU\trans \vb(v)$, for $n=1,\ldots, N$. 
In words, the average divergence between the true covariance matrices $\mK_n$'s and the estimated covariance matrices $\widehat \mC_n$'s converges to zero. This leads to the consistency of the estimated covariance function, hence the consistency of its eigenfunctions and eigenvalues.
	
\section{Discussion}\label{sec:disc}
In the context of estimating functional principle components, penalized splines have been demonstrated to enjoy numerical and practical advantages in various earlier works \citep{zhou2008joint,Zhou2014,He2018,Ding2022,he2022spline,sang2022}. This work provides the missing theoretical understanding of the penalized splines in this context, filling the gap between theory and practice. 

The rates of convergence presented in this work are obtained when the number $R$ of principal component functions is a fixed value. It is an interesting future research topic to extend this work to allow $R$ to grow with the sample size. Such an extension needs to address a few new challenges. The $r$-th eigenfunction becomes more difficult to estimate as $r\to\infty$.
This is because: i) the eigen-gap assumption (Condition~\ref{ass:eigengap}) is no longer valid in the sense that the $r$-th eigen gap ($\lambda_{0r} - \lambda_{0,r+1}$) shrinks towards zero as $r$ increases;
ii) we may still assume that each eigenfunction $\psi_{0r}$ belongs to the Sobolev space of order $p$, but its Sobolev norm could diverge to infinity as $r\to\infty$, and consequently, the spline approximation error may not be uniformly controlled for all eigenfunctions to be estimated.

Our asymptotic theory on the penalized spline estimation of PC functions focuses on the setting of sparse functional data and assumes that the number of observation points $M_n$ has a fixed upper bound. In theoretical works on other approaches for estimating the PC functions, $M_n$ is allowed to grow with the sample size $N$ \citep{paul2009consistency,li2010uniform,zhang2016sparse}. Extension of our work to this more general asymptotic setting is left for future research.

\begin{acks}[Acknowledgments]
	The authors would like to thank the anonymous referees, the Associate Editor, and the Editor for their constructive comments that improved the quality of this paper. Kejun He (email:~{kejunhe@ruc.edu.cn}) is the corresponding author. 
\end{acks}

\begin{supplement}
\stitle{Supplement to ``Penalized spline estimation of principal components for sparse functional data: rates of convergence''} \sdescription{The Supplementary Material contains: 1) a table summarizing the notations used throughout the work; and 2) all technical proofs in Sections \ref{sec:meth}--\ref{sec:consistency}.}
\end{supplement}

\bibliographystyle{imsart-nameyear.bst}

\end{document}